\DeclareRobustCommand{\rvdots}{%
	\vbox{
		\baselineskip4\p@\lineskiplimit\z@
		\kern-\p@
		\hbox{.}\hbox{.}\hbox{.}
}}
\newtheorem{theoremAlph}{Theorem}
\newtheorem{corollaryAlph}[theoremAlph]{Corollary}
\newtheorem{theorem}{Theorem}[section]
\newtheorem{lemma}[theorem]{Lemma}	
\newtheorem{proposition}[theorem]{Proposition}
\newtheorem{corollary}[theorem]{Corollary}
\theoremstyle{definition}
\newtheorem{definition}[theorem]{Definition} 
\newtheorem{remark}[theorem]{Remark}	
\newtheorem{question}[theorem]{Question}
\theoremstyle{definition} 
\newtheorem*{ack}{Acknowledgements}
\numberwithin{equation}{section}
\newcommand{\C}{\mathbb{C}} 
\newcommand{\K}{\mathbb{K}} 
\newcommand{\R}{\mathbb{R}} 
\newcommand{\Q}{\mathbb{Q}} 
\newcommand{\Z}{\mathbb{Z}} 
\newcommand{\N}{\mathbb{N}} 
\newcommand{\Quat}{\mathbb{H}}
\newcommand{\II}{\mathrm{I\!I}}
\newcommand{\Ric}{\textup{Ric}}
\newcommand{\ttimes}{\mathrel{\widetilde{\times} }}
\newcommand{\bigslant}[2]{{\raisebox{.2em}{$#1$}\left/\raisebox{-.2em}{$#2$}\right.}}
\newcommand{\cone}[1]{\overset{\scalebox{0.5}[0.3]{$\boldsymbol{<}$}}{#1}}
\DeclareRobustCommand*\uell{\mathpalette\@uell\relax}
\newcommand*\@uell[2]{
	\setbox0=\hbox{$#1\ell$}
	\setbox1=\hbox{\rotatebox{10}{$#1\ell$}}
	\dimen0=\wd0 \advance\dimen0 by -\wd1 \divide\dimen0 by 2
	\mathord{\lower 0.1ex \hbox{\kern\dimen0\unhbox1\kern\dimen0}}
}
\newcommand{\mylabel}[2]{#2\def\@currentlabel{#2}\label{#1}}
\begin{document}
	\title[\rmfamily Positive Ricci curvature on connected sums of fibre bundles]{\rmfamily Positive Ricci curvature on connected sums of fibre bundles}
	\date{\today}
	\subjclass[2020]{53C20, 57R65}
	\keywords{}
	\author{Philipp Reiser}
	\address{Department of Mathematics, University of Fribourg, Switzerland}
	\email{\href{mailto:philipp.reiser@unifr.ch}{philipp.reiser@unifr.ch}}
	
	\normalem
	
	\begin{abstract}
		We consider the problem of preserving positive Ricci curvature along connected sums. In this context, based on earlier work by Perelman, Burdick introduced the notion of \emph{core metrics} and showed that the connected sum of manifolds with core metrics admits a Riemannian metric of positive Ricci curvature. He subsequently showed that core metrics exist on compact rank one symmetric spaces, certain sphere bundles and manifolds obtained as boundaries of certain plumbings. In this article, we show that core metrics can be lifted along general fibre bundles, including sphere bundles and projective bundles. Our techniques also apply to spaces that decompose as the union of two disc bundles such as the Wu manifold. As application we show that all classes in the torsion-free oriented bordism ring can be represented by connected manifolds of positive Ricci curvature.
	\end{abstract}

	\maketitle

	\section{Introduction and Main Results}
	
	This article is motivated by the following question:
	
	\begin{question}\label{Q:conn_sum}
		Let $M_1^n$, $M_2^n$ be closed $n$-manifolds that admit a Riemannian metric of positive Ricci curvature. Does the connected sum $M_1\# M_2$ also admit a Riemannian metric of positive Ricci curvature?
	\end{question}
	
	If both $M_1$ and $M_2$ are non-simply-connected, then the connected sum $M_1\# M_2$ has an infinite fundamental group, which implies by the theorem of Bonnet--Myers that it cannot admit a Riemannian metric of positive Ricci curvature. However, if at least one of $M_1$ and $M_2$ is simply-connected, then Question \ref{Q:conn_sum} is open.
	
	First examples of Riemannian metrics of positive Ricci curvatures on connected sums were given by Cheeger \cite{Ch73} on the connected sum of two symmetric spaces of rank one and later by Sha--Yang \cite{SY89,SY91} on arbitrary connected sums of copies of $S^p\times S^q$ with fixed $p,q\geq 2$. The latter was obtained by realising these spaces as boundaries of certain \emph{plumbings}, i.e.\ certain gluing operations of linear disc bundles, and showing that these spaces carry a Riemannian metric of positive Ricci curvature. This technique was later extended and generalized by Wraith \cite{Wr98,Wr07a}, resulting for example in connected sums of products of spheres whose dimensions may vary. Further constructions in this spirit are given in \cite{CW17}, \cite{Re22a,Re23,Re24} and \cite{Wr97,Wr11}. While these results greatly extend the number of known examples of manifolds with a Riemannian metric of positive Ricci curvature, they are limited to manifolds that can be expressed as boundaries of plumbings. In particular, this technique alone cannot provide an answer to Question \ref{Q:conn_sum}.
	
	A different approach to construct Riemannian metrics of positive Ricci curvature on connected sums was given by Perelman \cite{Pe97} for connected sums of copies of $\pm\C P^2$. This technique was later extended by Burdick \cite{Bu19} to manifolds admitting metrics of the following form:	
	\begin{definition}\label{D:core}
		A Riemannian metric $g$ on an $n$-manifold $M^n$ is a \emph{core metric} if it has positive Ricci curvature and if there exists an isometric embedding $D^n\hookrightarrow (M,g)$, where we consider $D^n$ as equipped with the induced metric of a hemisphere in the round sphere of radius $1$.
	\end{definition}
	
	We note that this definition slightly differs from that introduced in \cite{Bu19}, see Lemma \ref{L:core_equ} below. The significance of this notion with regard to Question \ref{Q:conn_sum} is shown by the following result:
	\begin{theorem}[{\cite{Pe97},\cite[Theorem B]{Bu19}}]\label{T:conn_sum}
		Let $M_1^n, \dots,M_\ell^n$ be $n$-manifolds that admit core metrics. Then the connected sum $M_1\#\dots\# M_\ell$ admits a Riemannian metric of positive Ricci curvature.
	\end{theorem}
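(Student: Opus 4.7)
\emph{Proof plan.} My plan is to follow the template-and-glue strategy pioneered by Perelman and developed by Burdick. The aim is first to construct a Riemannian metric of positive Ricci curvature on $S^n$, the \emph{template}, that contains $\ell$ pairwise disjoint isometrically embedded copies of the unit round hemisphere $D^n$. Then, removing the interiors of these hemispheres from the template and gluing in the complements $M_i \setminus \mathrm{int}(D^n)$ supplied by the core metrics produces a candidate metric on the connected sum $M_1 \# \cdots \# M_\ell$ that should retain positive Ricci curvature.

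The core task is the template construction. For $\ell = 2$, the natural ansatz is a warped product $[0,L] \times_f S^{n-1}$ (with $S^{n-1}$ of unit round radius) and $L > \pi$, taking $f(t) = \sin(t)$ on $[0, \pi/2]$ and $f(t) = \sin(L-t)$ on $[L - \pi/2, L]$ so that the two end regions are isometric to honest unit round hemispheres, and interpolating in the middle so that both warped-product Ricci inequalities $-(n-1)f''/f > 0$ and $(n-2)(1-(f')^2)/f^2 - f''/f > 0$ hold. The naive strictly concave interpolation is obstructed by the matching data $f'(\pi/2) = f'(L - \pi/2) = 0$ with $f(\pi/2) = f(L - \pi/2) = 1$, so I expect the actual construction to use a more subtle smooth transition (possibly breaking rotational symmetry, or sacrificing strict positivity of one Ricci eigenvalue on a small set and correcting by perturbation). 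For $\ell \geq 3$, the plan would be to iterate a ``neck-capping'' surgery: replace a small open ball inside an already-built template by a long neck terminating in an additional round hemispherical cap, checking at each step that positive Ricci is preserved.

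For the gluing, the key structural observation is that a closed unit hemisphere sits in $S^n$ with totally geodesic equatorial boundary, so both sides of any cut along such an equator share the same first- and second-order boundary data. Invoking the equivalent characterization of a core metric recorded in Lemma \ref{L:core_equ} (which I expect to show that the isometric embedding $D^n \hookrightarrow M_i$ extends to a slightly larger round spherical cap), I may assume a genuinely round collar on both sides of every gluing interface; the glued metric is then smooth, of positive Ricci curvature, and topologically the result is $M_1 \# \cdots \# M_\ell$. The main obstacle throughout is the template construction: simultaneously fitting $\ell$ full unit round hemispheres isometrically into a single $S^n$ while maintaining strict positivity of Ricci curvature in the connecting region is the heart of the matter, and where I expect the bulk of the analytic work to lie.
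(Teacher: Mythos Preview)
The paper does not give its own proof of Theorem~\ref{T:conn_sum}; it is stated with attribution to Perelman \cite{Pe97} and Burdick \cite[Theorem B]{Bu19} and then used as a black box. So there is no in-paper argument to compare against, and your task reduces to whether your outline reconstructs the cited proof.

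Your overall strategy is the right one and matches the Perelman--Burdick architecture: build a positively Ricci-curved template on $S^n$ containing $\ell$ disjoint round hemispheres, then excise and replace by the pieces $M_i\setminus{D^n}^\circ$. However, what you have written is a plan rather than a proof, and you explicitly say so: the template construction is, as you acknowledge, ``the heart of the matter'', and you have not carried it out. Perelman's actual docking-station metric is not a singly-warped product and does not arise by the neck-capping iteration you sketch; it is a genuinely non-rotationally-symmetric construction whose details constitute essentially the entire content of the theorem. Identifying the obstruction to the warped-product ansatz is a good start, but without the replacement construction there is no proof.

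Two smaller remarks. First, the case $\ell=2$ does not require a template at all: by Definition~\ref{D:core} each $M_i\setminus{D^n}^\circ$ has round, totally geodesic boundary, so the sum of second fundamental forms vanishes and Proposition~\ref{P:gluing} glues the two pieces directly. Second, your appeal to Lemma~\ref{L:core_equ} to arrange a smooth round collar is not what that lemma provides; the correct mechanism for the gluing step is Proposition~\ref{P:gluing}, which only needs the sum of second fundamental forms to be positive semi-definite, not smooth matching of collars.
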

	
	A consequence of this result in combination with the theorem of Bonnet--Myers, which also follows from \cite[Theorem 1]{La70}, is that a closed manifold that admits a core metric is simply-connected. Further topological obstructions to the existence of a core metric on a closed manifold that admits a Riemannian metric of positive Ricci curvature are not known.
	
	While Theorem \ref{T:conn_sum} offers a promising attempt towards answering Question \ref{Q:conn_sum}, constructing a core metric on a given Riemannian manifold of positive Ricci curvature is a non-trivial problem. The known examples of manifolds with core metrics are given as follows:
	\begin{enumerate}
		\item[\mylabel{EQ:core1}{(C1)}] The sphere $S^n$, $n\geq 2$, and the projective spaces $\C P^n$, $\Quat P^n$ and $\mathbb{O}P^2$ (\cite{Bu19}, \cite{Pe97}),
		\item[\mylabel{EQ:core2}{(C2)}] Total spaces of linear sphere bundles $E^{p+q}\to B^q$, where $B$ is closed and admits a core metric, $p\geq 2$ and $p+q\geq 6$ (\cite[Theorem B]{Bu20}, \cite[Theorem C]{Re23}),
		\item[\mylabel{EQ:core3}{(C3)}] Connected sums $M_1^n\# M_2^n$ where both $M_1$ and $M_2$ admit a core metric (\cite[Corollary 3.18]{Bu20a}), 
		\item[\mylabel{EQ:core4}{(C4)}] Boundaries of certain plumbings (\cite[Theorem C]{Bu19a}).
	\end{enumerate}
	
	Note that the case of Cartesian products $M_1\times M_2$ only appears in \ref{EQ:core2} in the special case where one of $M_1$ and $M_2$ is a sphere. More generally, one can ask whether core metrics exist on the total space of a fibre bundle whose base and fibre both admit a core metric, in analogy with a classical result for Riemannian metrics of positive Ricci curvature, see \cite{Po75}, \cite{Na79}, \cite[Theorem 2.7.3]{GW09} and Proposition \ref{P:Ric_bundles} below. The main result of this article is to answer this question affirmatively:
	
	\begin{theoremAlph}\label{T:core_bdl}
		Let $E\to B^q$ be a fibre bundle with fibre $F^p$ and structure group $G$ such that $E$ is closed and $q\geq 3$, $p\geq 2$. Suppose the following:
		\begin{enumerate}
			\item $B$ admits a core metric $\check{g}$,
			\item $F$ admits a Riemannian metric $\hat{g}$ of positive Ricci curvature that is invariant under the action of $G$,
			\item $F$ admits a core metric $\hat{g}'$ that lies in the same path component as $\hat{g}$ in the space of Ricci-positive metrics on $F$.
		\end{enumerate}
		Then $E$ admits a core metric.
	\end{theoremAlph}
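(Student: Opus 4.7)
I plan to build a core metric on $E$ by using the core decomposition $B = B_0 \cup D^q$ from $\check{g}$, where $D^q$ is the isometrically embedded round hemisphere and $B_0 = B \setminus \intr(D^q)$. Over $B_0$ I would use a connection metric built from the $G$-invariant fibre metric $\hat{g}$; over $D^q$ the bundle trivializes to $D^q \times F$, and into this trivial piece I fit a round hemisphere $D^{p+q}$ using both the core of $B$ and the core $D^p \subset (F,\hat{g}')$ simultaneously; along an annular collar in between I would interpolate from $\hat{g}$ to $\hat{g}'$ through the Ricci-positive path provided by hypothesis (3).

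\textbf{Outer region and interpolation.} Over $B_0$, a principal $G$-connection together with $\check{g}|_{B_0}$ and the rescaled fibre metric $\epsilon^2 \hat{g}$ gives a Vilms--Nash connection metric; for $\epsilon$ small, Proposition \ref{P:Ric_bundles} guarantees positive Ricci, and near $\partial B_0 = S^{q-1}$ the roundness of the hemisphere collar makes the induced metric equal $g_{S^{q-1}} + \epsilon^2 \hat{g}$ up to a product collar. Fix a smooth path $\{g_t\}_{t \in [0,1]}$ of Ricci-positive metrics on $F$ from $\hat{g}$ to $\hat{g}'$. On a cylindrical region $[0,L] \times S^{q-1} \times F$ attached along this collar, I take the metric $dt^2 + R^2\, g_{S^{q-1}} + \epsilon^2 g_{\phi(t)}$ with $\phi$ a cutoff. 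Inflating the $S^{q-1}$ factor (that is, taking $R$ large relative to $\partial_t g_t$) keeps $\Ric > 0$ across the interpolation by a standard computation. The result is a Ricci-positive metric on $E|_{B \setminus \intr(D^q_\delta)}$, for a slightly smaller disc $D^q_\delta \subset D^q$, whose inner boundary is $g_{S^{q-1}} + \epsilon^2 \hat{g}'$.

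\textbf{Hemisphere in the trivial piece.} The remaining task is to construct on $D^q \times F$ a Ricci-positive metric realizing the above boundary data and containing an isometric round $D^{p+q}$. The key geometric input is the doubly warped presentation of the round hemisphere via the join $S^{p+q-1} = S^{q-1} * S^{p-1}$,
\begin{equation*}
g_{\mathrm{round}} \;=\; d\rho^2 + \sin^2(\rho)\bigl[d\theta^2 + \cos^2(\theta)\, g_{S^{q-1}} + \sin^2(\theta)\, g_{S^{p-1}}\bigr], \qquad \rho,\theta \in [0,\tfrac{\pi}{2}].
\end{equation*}
The two totally geodesic hemispheres at $\theta = 0$ and $\theta = \pi/2$ are a $D^q$ and a $D^p$ meeting orthogonally at the pole. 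I would place these along $D^q \times \{p_0\}$ and $\{p_0'\} \times D^p$ inside $D^q \times F$, where $p_0 \in D^p \subset F$ and $p_0' \in D^q$ are the respective poles, impose $g_{\mathrm{round}}$ on the $\pi/2$-ball around $(p_0', p_0)$ to obtain the required isometric $D^{p+q}$, and outside this ball transition to the collar boundary via an annular doubly warped product of the form $du^2 + a(u)^2 g_{S^{q-1}} + b(u)^2 \hat{g}'$, with warping functions $a,b$ chosen so that $\Ric > 0$ is preserved and the gluings are smooth.

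\textbf{Main obstacle.} The hard step is this inner construction. The product of two hemisphere metrics on $D^q \times D^p$ is \emph{not} isometric to a round hemisphere of radius $1$, so the metric inside the $\pi/2$-ball around $(p_0', p_0)$ must be genuinely deformed to the doubly warped form $g_{\mathrm{round}}$. Simultaneously guaranteeing smooth matching at $\rho = \pi/2$ and at the outer collar, Ricci positivity throughout, and the exact isometry of the $\pi/2$-ball with the round hemisphere requires a delicate choice of warping functions. The argument will exploit the slack afforded by small $\epsilon$ and large $R$, together with the Ricci positivity of both $\hat{g}$ and $\hat{g}'$ and the standard concavity inequalities for Ricci curvature of doubly warped products.
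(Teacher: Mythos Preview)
Your outer region and interpolation steps are essentially what the paper does (Propositions~\ref{P:Ric_bundles} and~\ref{P:deform} handle exactly this), so the broad strategy is sound. The genuine gap is in your ``hemisphere in the trivial piece'' step, and it is not merely a matter of delicate warping-function choices. Your ansatz $du^2 + a(u)^2 g_{S^{q-1}} + b(u)^2 \hat{g}'$ lives on $I\times S^{q-1}\times F$ (or, collapsing $a$ at one end, on $D^q\times F$), but neither space minus a ball is of this form: $(D^q\times F)\setminus D^{p+q}$ has boundary components $S^{q-1}\times F$ and $S^{p+q-1}$, and no hypersurface $\{u=\text{const}\}$ in your warped product is an $S^{p+q-1}$. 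To actually produce a round $S^{p+q-1}$ you must bring in the core disc $D^p\subset (F,\hat{g}')$ and work with the decomposition
\[
(D^q\times F)\setminus {D^{p+q}}^\circ \;\cong\; \bigl(D^q\times (F\setminus {D^p}^\circ)\bigr)\;\cup_{S^{q-1}\times S^{p-1}}\;\bigl(S^{q-1}\times D^p\bigr),
\]
and here the real obstruction appears: with the product of the two hemisphere metrics, each piece has totally geodesic boundary, but the dihedral angle along the corner $S^{q-1}\times S^{p-1}$ is $\tfrac{3\pi}{2}>\pi$. Any local smoothing of such a re-entrant corner forces highly negative principal curvatures on the resulting $S^{p+q-1}$, destroying convexity and hence the possibility of capping with a round hemisphere (cf.\ the discussion in the introduction).

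The paper's resolution is not a clever choice of $a,b$ but an entirely new intermediate piece: a metric on $I\times S^{q-1}\times F$ (the ``building block'' of Section~\ref{S:build_block}, Corollary~\ref{C:handle}) that, by cutting a near-cone metric along a carefully chosen hypersurface, introduces a corner of angle \emph{less than} $\tfrac{\pi}{2}$ along $\{1\}\times S^{q-1}\times S^{p-1}$ while keeping the boundary convex. For this to work one needs a sufficiently steep cone slope in the $F$-direction, which is arranged by the curvature-transfer construction of Proposition~\ref{P:bdl_warping} (this is also where $q\geq 3$, i.e.\ fibre dimension $\geq 2$ in Theorem~\ref{T:NE_core_mtrc}, is used). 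Only after inserting this block can one glue on $D^q\times(F\setminus{D^p}^\circ)$ with total corner angle below $\pi$ and then smooth via Proposition~\ref{P:gluing_corner}. Your proposal does not contain any mechanism to reduce the corner angle, and without it the construction cannot be completed.
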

	Note that Theorem \ref{T:core_bdl} in particular applies when $E=B^q\times F^p$ with $p\geq 2$, $q\geq 3$, and both $B$ and $F$ admit a core metric.
	
	We apply Theorem \ref{T:core_bdl} in two special cases: linear sphere bundles and projective bundles with fibre $\C P^n$, $\Quat P^n$ or $\mathbb{O} P^2$. By the latter we mean here a fibre bundle with structure group contained in $U(n+1)$, $Sp(n+1)$ or $F_4$, respectively. By \ref{EQ:core2}, for linear sphere bundles we are left with the case of a total space of dimension $4$ or $5$, while the case of a projective bundle is open. In fact, our techniques apply to total spaces of sphere and projective bundles without any restriction on the dimensions involved (provided both base and fibre admit a core metric), which in particular provides a new proof of \ref{EQ:core2}.	
	
	\begin{theoremAlph}\label{T:sph_proj_bdl}
		Let $E\xrightarrow{\pi}B^q$ be a linear sphere bundle with fibre $S^p$, $p\geq 2$, or a projective bundle with fibre $\C P^n$, $\Quat P^n$ or $\mathbb{O}P^2$. If $B$ is closed and admits a core metric, then $E$ admits a core metric.
	\end{theoremAlph}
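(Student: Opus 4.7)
The plan is to derive Theorem~\ref{T:sph_proj_bdl} directly from Theorem~\ref{T:core_bdl} by verifying its three hypotheses in each of the four families of bundles. Hypothesis~(1) is the standing assumption on $B$, and in every case the fibre dimension satisfies $p\geq 2$. For hypothesis~(2), I would take $\hat{g}$ to be the canonical symmetric metric on the fibre: the round metric on $S^p$, invariant under $O(p+1)$; and the Fubini--Study-type symmetric metrics on $\C P^n$, $\Quat P^n$ and $\mathbb{O}P^2$, invariant under $U(n+1)$, $Sp(n+1)$ and $F_4$ respectively. In each case the structure group $G$ is by hypothesis contained in the isometry group of $\hat{g}$, so $\hat{g}$ is a $G$-invariant metric of positive Ricci curvature.

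For hypothesis~(3), each such fibre admits a core metric $\hat{g}'$ by \ref{EQ:core1}. The real content is showing that $\hat{g}$ and $\hat{g}'$ lie in the same path component of the space of Ricci-positive metrics on $F$. I would establish this by inspecting the Perelman--Burdick construction of the core metric on a CROSS: $\hat{g}'$ is produced from $\hat{g}$ by a rotationally symmetric doubly-warped-product modification in a small neighbourhood of an embedded disc (a ``cap attachment'' that replaces a geodesic ball by a hemispherical piece), and this modification can be implemented through a continuous one-parameter family of metrics on $F$ which remain Ricci-positive throughout, yielding the desired path.

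When $q\geq 3$, Theorem~\ref{T:core_bdl} then produces a core metric on $E$, and all such cases are complete. The main obstacle I anticipate is the remaining case $q = 2$: since any closed manifold with a core metric is simply connected, this forces $B = S^2$. Theorem~\ref{T:core_bdl} is not directly applicable here, and the gap must be closed either by adapting the underlying bundle construction to a two-dimensional base --- exploiting the explicit $O(3)$-geometry of the round $S^2$ to handle the warping on the $2$-dimensional base by hand --- or, for those low-dimensional total spaces (essentially $p\in\{2,3\}$, since \ref{EQ:core2} already covers $p+q\geq 6$), by identifying the finitely many bundles over $S^2$ and producing core metrics directly via \ref{EQ:core3} after writing the twisted total spaces as connected sums of CROSSes.
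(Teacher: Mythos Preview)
Your plan for $q\geq 3$ matches the paper's approach: apply Theorem~\ref{T:core_bdl} with the symmetric metric as $\hat{g}$ and a core metric as $\hat{g}'$. For sphere bundles this is trivial since one may take $\hat{g}=\hat{g}'=ds_p^2$. For projective bundles, however, your treatment of hypothesis~(3) is too optimistic. The core metric on $\K P^n$ used in the paper (following \cite{Bu19}, \cite{RW23}) is a cohomogeneity-one metric $g_{f_1,h_1}$, and the symmetric metric is $g_{f_0,h_0}$ with $f_0=f_1$ but $h_0\neq h_1$; the paper constructs an explicit interpolation $h_s$ and verifies by direct computation that $g_{f_0,h_s}$ has positive Ricci curvature for all $s$. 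This computation is the substance of the $q\geq 3$ case and cannot be replaced by the assertion that the Burdick construction ``can be implemented through a continuous one-parameter family''; Ricci positivity along the path must actually be checked.

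For $q=2$ there is a genuine gap. Your fallback to \ref{EQ:core2} only applies to \emph{sphere} bundles, so it gives nothing for projective bundles over $S^2$. Your approach~(b) also fails in the residual sphere-bundle cases: $S^2\times S^2$ is not a connected sum of CROSSes, and in dimension~$5$ the only CROSS is $S^5$, so neither $S^2\times S^3$ nor the non-trivial $S^3$-bundle over $S^2$ can be reached via \ref{EQ:core3}. The paper's route is different and more structural: it shows that over $S^2$ the structure group reduces (from $O(p+1)$ to $O(p)$, from $U(n+1)$ to $U(n)$, and is already trivial for $Sp(n+1)$ and $F_4$), so the fibre bundle splits as a union of two linear disc bundles over $S^2$. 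One then bypasses Theorem~\ref{T:core_bdl} entirely and applies the more primitive Theorem~\ref{T:NE_core_mtrc} directly (Propositions~\ref{P:sph_bdl_section} and~\ref{P:proj_bdl_section}). The single exception is $S^2\times S^2$, which requires a separate argument (Theorem~\ref{T:BxS2_core_mtrc}), while the non-trivial $S^2$-bundle over $S^2$ is handled, as you suggest, via $\C P^2\#(-\C P^2)$.
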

	
	In particular, Theorem \ref{T:sph_proj_bdl} establishes the existence of a core metric on the manifold $S^2\times S^2$. This answers \cite[Question after Corollary A]{Bu20} affirmatively and shows that all closed, simply-connected 4-manifold that are known to admit a Riemannian metric of positive Ricci curvature, which consist of connected sums of copies of $S^2\times S^2$ and $\pm\C P^2$ (see \cite{SY93}), in fact admit a core metric. This also provides many new examples of manifolds with core metrics in higher dimensions, such as total spaces of linear $S^2$-bundles over $S^2\times S^2$, which all are new examples in dimension 6, see \cite[Remark 4.17]{Re22a} and \cite[Proposition 6.5.7]{Re22}.
	
	
	It remains open whether Theorem \ref{T:sph_proj_bdl} extends to the case $p=1$ as considered in \cite{BB78}, \cite{GPT98}.
	
	Our techniques also apply to spaces that are \emph{double disc bundles}, i.e.\ spaces obtained by gluing two linear disc bundles along their boundaries. We illustrate this by constructing a core metric on the \emph{Wu manifold}.
	
	\begin{theoremAlph}\label{T:Wu}
		The Wu manifold $W^5$ admits a core metric.
	\end{theoremAlph}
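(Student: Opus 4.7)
My plan is to exhibit $W^5$ as a double disc bundle and apply the construction from Theorem \ref{T:sph_proj_bdl} to each side, then glue the resulting Ricci-positive metrics along the common boundary.

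The Wu manifold admits a decomposition $W^5 = D(\xi_1) \cup_N D(\xi_2)$ where each $\xi_i \to S^2$ is a rank-$3$ linear vector bundle and $N$ is the common boundary, an $S^2$-bundle over $S^2$. Such a decomposition can be obtained from the handle decomposition of $W^5$ corresponding to its CW structure with a single cell in each of the dimensions $0, 2, 3, 5$: the union of the $0$- and $2$-handles thickens to one $D^3$-bundle over $S^2$, and by Poincar\'e duality the dual side is another $D^3$-bundle over $S^2$; the attaching map of the $3$-handle contributes the Mayer--Vietoris boundary map $H^2(N) \to H^3(W) = \Z/2$ that characterises $W$. Since the bases are spheres admitting core metrics by \ref{EQ:core1}, the construction used in the proof of Theorem \ref{T:sph_proj_bdl} applies to each $D(\xi_i)$: one equips it with a Ricci-positive metric $g_i$ of doubly warped product form $dr^2 + \phi_i(r)^2\, \pi_i^* g_{S^2} + \psi_i(r)^2\, g_{\text{fib}}$ near the zero section, with warping functions $\phi_i, \psi_i$ chosen so that the metric extends smoothly across the zero section, has positive Ricci curvature, and attains a prescribed invariant metric on $N$ at $r = r_0$. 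On one of the two sides, I would further arrange $g_i$ to contain an isometric round hemisphere $D^5 \hookrightarrow (D(\xi_i), g_i)$ via the same mechanism as in Theorem \ref{T:core_bdl}.

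The two metrics are glued along $N$ by adjusting the boundary values and first derivatives of $\phi_i, \psi_i$ on both sides; since $N$ carries only a finite-dimensional family of $SO(3)$-invariant warped metrics of this form, matching reduces to a finite-parameter problem, after which a standard smoothing of the resulting $C^0$ metric yields a globally smooth Ricci-positive metric on $W^5$ containing the isometric hemisphere. The main obstacle is reconciling the ODE inequalities that ensure positive Ricci curvature on each side (which involve second derivatives of $\phi_i, \psi_i$) with the $C^1$ matching conditions at $N$ (which constrain the first derivatives). As in the double disc bundle constructions of \cite{SY89,Wr98,Bu19a}, this is resolved via a suitable bending of the warping profile near the boundary, exploiting the freedom in both the shape of the warping functions in the interior and the choice of boundary metric on $N$.
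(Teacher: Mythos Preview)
Your outline has a genuine gap at the gluing step. In the decomposition $W\cong (S^2\ttimes D^3)\cup_\phi(S^2\ttimes D^3)$ both disc bundles are the same nontrivial bundle, and the essential difficulty is that the gluing diffeomorphism $\phi$ of the common boundary $N\cong S^2\ttimes S^2\cong\C P^2\#(-\C P^2)$ is \emph{not} a bundle map: it acts on $H_2(N)\cong\Z^2$ by $(x,y)\mapsto(x,-y)$, so it does not preserve the splitting into horizontal and vertical directions. Consequently, if you put a warped submersion metric $\phi_1(r)^2\pi^*g_{S^2}+\psi_1(r)^2 g_{\mathrm{fib}}$ on one side, its restriction to $N$, pulled back by $\phi$, is \emph{not} of this form for the bundle structure on the other side. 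Your claim that ``matching reduces to a finite-parameter problem'' therefore fails: the two boundary metrics do not live in the same finite-dimensional family of $SO(3)$-invariant warped metrics, and no adjustment of the scalar functions $\phi_i,\psi_i$ alone can make them agree.

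The paper resolves this by proving (Lemma~\ref{L:pb_path}) that $g_{f_0,h_0}$ and $\phi^*g_{f_0,h_0}$ lie in the same path component of Ricci-positive metrics on $N$. The trick is to first deform $h_0$ so that it agrees with $f_0$ on a neighbourhood of $t=0$; on that strip the metric becomes the round warped product $dt^2+f_0(t)^2 ds_3^2$, on which the family $A_t\in SO(4)$ defining $\phi$ acts by isometries, and one can then unwind $\phi$ through Ricci-positive metrics. With this path in hand, one attaches cylinders (Propositions~\ref{P:deform} and~\ref{P:gluing}) to one copy of $S^2\ttimes D^3$ so that its boundary becomes isometric to $\phi^*g_{f_0,h_0}$ with convex boundary, and then invokes Theorem~\ref{T:NE_core_mtrc} with $N=S^2\ttimes D^3$ and $\overline{E}=S^2\ttimes D^3$. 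Note also that the round hemisphere is not produced inside one disc bundle as you suggest; it arises from the machinery of Theorem~\ref{T:NE_core_mtrc} applied to the glued space, which carves the hemisphere out of the $D^q\times D^p$ piece of $\overline{E}$.
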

	
	Recall that the Wu manifold $W^5$ is a closed, simply-connected non-spin 5-manifold with $H_2(W)\cong\Z/2$. It can be described as the homogeneous space $SU(3)/SO(3)$, which, by classical results of Nash \cite{Na79}, admits a Riemannian metric of positive Ricci curvature. By \cite{DGK23} the Wu manifold is in fact the only closed, simply-connected 5-manifold that is a double disc bundle besides the sphere $S^5$, the product $S^2\times S^3$ and the total space of the unique non-trivial linear $S^3$-bundle over $S^2$. Moreover, it is one of the \textquotedblleft elementary\textquotedblright\ manifolds in Barden's classification of closed, simply-connected 5-manifolds \cite{Ba65}. To the best of our knowledge, the only known examples of such manifolds with a metric of positive Ricci curvature are as follows:
	\begin{enumerate}
		\item All closed, simply-connected 5-manifolds with torsion-free homology (\cite{SY91}, see also \cite{CG20}),
		\item Certain closed, simply-connected spin $5$-manifolds with second Betti number at most 8 using Sasakian geometry (\cite{Ko09}, \cite{BG02,BG06a} and \cite[Corollary 10.2.20, Theorem 10.2.25 and Table B.4.2]{BG08}),
		\item The Wu manifold $W$.
	\end{enumerate}
	In particular, the Wu manifold is the only known non-spin example with torsion in its homology. By Theorems \ref{T:sph_proj_bdl} and \ref{T:Wu} we can now construct examples of closed, simply-connected non-spin 5-manifolds with any given second Betti number and arbitrarily large torsion group by taking connected sums of the form
	\[ \#_{\ell_1}W\#_{\ell_2}(S^2\times S^3). \]
	
	We note that the proof of Theorem \ref{T:Wu} can be generalized to all dimension $4m+1$, see Remark \ref{R:Wm} below.	
	
	Finally, we show that we can use Theorem \ref{T:sph_proj_bdl} to construct manifolds of positive Ricci curvature that realize many different bordism classes.
	
	\begin{corollaryAlph}\label{C:bordism}
		Any class in the torsion-free oriented bordism ring $\bigslant{\Omega_*^{SO}}{Tors}$ is represented by a connected manifold that admits a Riemannian metric of positive Ricci curvature.
	\end{corollaryAlph}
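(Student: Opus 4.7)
The plan is to combine the structural description of the oriented bordism ring with Theorems~\ref{T:conn_sum}, \ref{T:core_bdl}, and \ref{T:sph_proj_bdl}. I would first recall the classical result of Milnor and Novikov that $\Omega_*^{SO}/\textup{Tors}$ is a polynomial ring over $\Z$ with a single generator in each dimension $4k$, $k\geq 1$, and that such polynomial generators may be chosen among two geometric families: the complex projective spaces $\C P^{2k}$, and the Milnor hypersurfaces $H_{m,n}\subset \C P^m\times \C P^n$ cut out by a polynomial of bidegree $(1,1)$. The crucial geometric observation is that each $H_{m,n}$ is the projectivization of a rank $n$ complex subbundle of the trivial bundle $\C^{n+1}\times\C P^m$ (the kernel bundle of the linear form defined by $[z]\in\C P^m$), so it is a projective bundle in the sense of Theorem~\ref{T:sph_proj_bdl}, with fibre $\C P^{n-1}$ and structure group contained in $U(n)$.

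Each generator therefore admits a core metric: $\C P^{2k}$ by~\ref{EQ:core1}, and $H_{m,n}$ by Theorem~\ref{T:sph_proj_bdl} applied to its bundle structure over $\C P^m$. Given an arbitrary class $[M]\in\Omega_*^{SO}/\textup{Tors}$, I would express it as a finite integer combination of products of generators,
\[ [M]=\sum_{i}\varepsilon_i\bigl[P_{i,1}\times\cdots\times P_{i,k_i}\bigr], \qquad \varepsilon_i\in\{\pm 1\}, \]
after absorbing positive multiplicities into repeated summands. Each product $P_{i,1}\times\cdots\times P_{i,k_i}$ inherits a core metric by iterated application of Theorem~\ref{T:core_bdl} to trivial bundles with trivial structure group: conditions~(2) and~(3) there are automatic upon choosing $\hat g=\hat g'$ equal to the core metric on the fibre, while the hypotheses $p\geq 2$, $q\geq 3$ hold since every generator has dimension at least $4$.

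Finally, the disjoint union $\bigsqcup_i \varepsilon_i P_i$ represents $[M]$ in oriented bordism, and replacing it by the oriented connected sum $N=\#_i \varepsilon_i P_i$ yields a connected representative of $[M]$. Each summand still admits a core metric, since orientation reversal preserves this property, so Theorem~\ref{T:conn_sum} furnishes a Riemannian metric of positive Ricci curvature on $N$, proving the corollary. The main obstacle is essentially bookkeeping: locating within the cobordism literature a generating set of $\Omega_*^{SO}/\textup{Tors}$ that is realized by manifolds accessible to Theorem~\ref{T:sph_proj_bdl}, namely complex projective spaces and projectivized complex vector bundles over them. Once this structural input is in place, every remaining step is a direct invocation of results already established in the paper.
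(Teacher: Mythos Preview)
Your proposal is correct and follows essentially the same approach as the paper: both identify the Milnor generators of $\Omega_*^{SO}/\textup{Tors}$ as complex projective spaces and Milnor hypersurfaces $H_{m,n}$, observe that the latter are projective bundles over $\C P^m$ and hence carry core metrics by Theorem~\ref{T:sph_proj_bdl}, and then pass to arbitrary classes via products (Theorem~\ref{T:core_bdl}) and connected sums. The only cosmetic difference is that the paper invokes~\ref{EQ:core3} for the connected sum step (yielding a core metric on the representative), whereas you invoke Theorem~\ref{T:conn_sum} directly; either suffices for the stated conclusion.
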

	Corollary \ref{C:bordism} will directly follow from Theorem \ref{T:sph_proj_bdl} after showing that a generating set can be realized by manifolds admitting core metrics. We note that one motivation for Corollary \ref{C:bordism} is the fact that the aforementioned plumbing construction, which is one of the main tools to construct Riemannian metrics of positive Ricci curvature, typically results in manifolds that are boundaries, i.e.\ that are trivial in the corresponding bordism group.
	
	It is open whether a similar result also holds for the spin bordism ring $\Omega_*^{Spin}$ when restricting to the kernel of the $\hat{A}$-genus. A partial result in this direction is given in Proposition \ref{P:spin_bord} below. Further, it is shown in \cite[Proposition 3.5]{De09} that in every dimension $n\leq 100$ a generating set of $\ker(\hat{A}\colon\Omega_n^{Spin}\otimes\Q\to\Q)$ is represented by manifolds admitting a Riemannian metric of positive Ricci curvature. One can also ask whether it is possible to extend Corollary \ref{C:bordism} to stricter curvature conditions such as a lower bound on the sectional curvature (\cite{DT07}, \cite{HW18}).	
	
	To illustrate the challenges in proving Theorem \ref{T:core_bdl}, consider the case of a trivial bundle $B^q\times F^p$. If $D^q\subseteq B$ and $D^p\subseteq F$ denote the embedded hemispheres, we can decompose the space $(B\times F)\setminus {D^{p+q}}^\circ$ as
	\[ (B\times F)\setminus {D^{p+q}}^\circ\cong (B\setminus {D^q}^\circ)\times F\cup_{S^{q-1}\times (F\setminus{D^p}^\circ)} D^q\times (F\setminus{D^p}^\circ). \]
	Since the boundaries of $(B\setminus{D^q}^\circ)$ and $(F\setminus{D^p}^\circ)$ are totally geodesic, the same holds for the boundaries of $(B\setminus {D^q}^\circ)\times F$ and $D^q\times (F\setminus{D^p}^\circ)$. Hence, if we could smooth out the corner $S^{q-1}\times S^{p-1}$ obtained by the gluing while preserving a non-negative second fundamental form, we could attach a round hemisphere to $(B\times F)\setminus {D^{p+q}}^\circ$ after a deformation of the metric on the boundary. However, since the dihedral angles at the corner are given by $\frac{3}{2}\pi>\pi$, smoothing in a small neighbourhood results in highly negative principal curvatures.
	
	We will therefore construct a specific metric on the cylinder $I\times S^{q-1}\times F$ which we glue in-between, that reduces the angle to less than $\pi$. For that we will cut out a piece of a metric that is close to the product of a round metric on $S^{q-1}$ and a cone metric over $F$ along a hypersurface that resembles a geodesic in the cone over $S^1$ starting tangent to $S^1$ and approaching a line orthogonal to $S^1$ as $t\to\infty$ (see also Figure \ref{F:handle1} below). For this to be possible we need a sufficient slope on the cone, i.e.\ a certain lower bound on the principal curvatures for directions tangent to $F$. For this we will use the doubly warped product constructions of \cite{Re23} to \textquotedblleft transfer\textquotedblright\ positive principal curvatures from the $S^{q-1}$-factor to $F$.
	
	This article is laid out as follows. In Section \ref{S:PREL} we recall basic facts on the second fundamental form and the Ricci curvature of certain metrics on a cylinder (Subsection \ref{SS:cylinder}) and spaces obtained by cutting along the graph of a function (Subsection \ref{SS:cut_graph}), discuss Riemannian metrics of positive Ricci curvature on fibre bundles (Subsection \ref{SS:fibre_bdls}) and collect various results that allow to deform and glue Riemannian metrics of positive Ricci curvature (Subsection \ref{SS:glue_deform}). In Section \ref{S:build_block} we construct the \textquotedblleft building block\textquotedblright\ which is the key object in the proof of Theorem \ref{T:core_bdl}. In Section \ref{S:glued_spaces} we then prove a general result on core metrics on certain glued spaces and use it to prove Theorem \ref{T:core_bdl}. Finally, in Section \ref{S:app} we consider applications and prove Theorem \ref{T:sph_proj_bdl} in Subsections \ref{SS:sph_bdls} and \ref{SS:Proj_bdl}, Theorem \ref{T:Wu} in Subsection \ref{SS:Wu} and Corollary \ref{C:bordism} in Subsection \ref{SS:bordism}.
	
	\begin{ack}
		The author would like to thank Anand Dessai, Fernando Galaz-García, Wilderich Tuschmann and David Wraith for helpful conversations and comments on an earlier version of this article.
	\end{ack}
	
	\section{Preliminaries}\label{S:PREL}
	
	Let $(M^n,g)$ be a Riemannian manifold and $N\subseteq M$ an embedded hypersurface with trivial normal bundle and let $\nu$ be a unit normal field on $N$. The \emph{second fundamental form} of $N$ at $x\in N$ is defined by
	\[ \II(u,v)=g(\nabla_u \nu,v) \]
	for $u,v\in T_x N$, where $\nabla$ denotes the Levi-Civita connection of $M$. If $N$ is a boundary component of $M$, we use the convention that $\II$ is calculated with respect to the outward pointing unit normal field. The \emph{principal curvatures} of the boundary are then the eigenvalues of $\II$. We say that the boundary of $(M,g)$ is \emph{convex} if at every point all principal curvatures are positive, i.e.\ if the second fundamental form is positive definite. For example, a geodesic ball in the round sphere of radius 1 has convex boundary if and only if it is a proper subset of a hemisphere. We refer to \cite[Section 3.2.1]{Pe16} for further information on the second fundamental form. An important aspect will be the behaviour of $\II$ under scaling of the metric $g$. Since the Levi--Civita connection is unaffected by scaling, we obtain that the second fundamental form $\II^{R^2g}$ of $R^2g$ for $R>0$ is given by
	\[\II^{R^2g}=R\,\II^g \]
	and the principal curvatures are scaled by $\tfrac{1}{R}$.

	\subsection{Metrics on a cylinder}\label{SS:cylinder}
	
	In this subsection we consider metrics on a cylinder $I\times M$ and establish formulas for the curvatures of such a metric.
	
	\begin{lemma}
		\label{L:curv_form}
		Let $M$ be a manifold, $I$ an interval and let $g=dt^2+g_t$ be a Riemannian metric on $I\times M$, where $g_t$ is a smooth family of metrics on $M$. Let $g_t'$ and $g_t''$ denote the first and second derivative of $g_t$ in $t$-direction, respectively. Then the second fundamental form of a slice $\{t\}\times M$ with respect to the normal vector $\partial_t$ is given by
		\[\II=\frac{1}{2}g_t' \]
		and the Ricci curvature of the metric $g$ are given as follows:
		\begin{align*}
			&\Ric(\partial_t,\partial_t)=-\frac{1}{2}\mathrm{tr}_{g_t}g_t^{\prime\prime}+\frac{1}{4}\lVert g_t^\prime\rVert_{g_t}^2,\\
			&\Ric(v,\partial_t)\;=-\frac{1}{2}v(\mathrm{tr}_{g_t}g_t')+\frac{1}{2}\sum_{i}(\nabla^{g_t}_{e_i}g_t')(v,e_i),\\
			&\Ric(u,v)\;\;=\Ric^{g_t}(u,v)-\frac{1}{2}g_t^{\prime\prime}(u,v)+\frac{1}{2}\sum_i g_t^\prime(u,e_i)g_t^\prime(v,e_i)-\frac{1}{4}g_t^\prime(u,v)\mathrm{tr}_{g_t}g_t^\prime.
		\end{align*}
		Here $u,v\in T_xM$ and $\{e_i\}$ is an orthonormal basis of $T_xM$ with respect to $g_t$.
	\end{lemma}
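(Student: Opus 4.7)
My plan is to derive all four formulas from standard connection computations in an adapted frame, together with the Gauss--Codazzi--Riccati equations for the family of hypersurfaces $N_t=\{t\}\times M$.

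Fix a point $(t_0,x_0)$ and choose an orthonormal basis $\{e_i\}$ of $(T_{x_0}M,g_{t_0})$. I extend each $e_i$ to a $t$-independent vector field on a neighbourhood in $I\times M$, so that $[e_i,\partial_t]=0$ and $[e_i,e_j]$ is tangent to $M$. Applied to $2g(\nabla_{e_i}\partial_t,e_j)$, the Koszul formula collapses, since $g(\partial_t,e_j)=g(e_i,\partial_t)=0$ and $g([e_i,e_j],\partial_t)=0$, to
\[2g(\nabla_{e_i}\partial_t,e_j)=\partial_t\,g_t(e_i,e_j)=g_t'(e_i,e_j).\]
This immediately gives the formula $\II=\tfrac12 g_t'$, and identifies the Weingarten operator of $N_t$ as $\mathcal{S}_t=\tfrac12 g_t^{-1}g_t'$.

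For the radial Ricci component, the Riccati equation for the family of equidistant hypersurfaces $N_t$ reads $\nabla_{\partial_t}\mathcal{S}_t+\mathcal{S}_t^2=-R(\cdot,\partial_t)\partial_t$ on $TN_t$. Taking the $g_t$-trace gives
\[\Ric(\partial_t,\partial_t)=-\partial_t\,\mathrm{tr}_{g_t}\mathcal{S}_t-\mathrm{tr}_{g_t}(\mathcal{S}_t^2),\]
and expanding using $\mathrm{tr}_{g_t}\mathcal{S}_t=\tfrac12\mathrm{tr}_{g_t}g_t'$ together with $\partial_t(g_t^{-1})=-g_t^{-1}g_t'g_t^{-1}$ yields the stated expression. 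For the mixed term $\Ric(v,\partial_t)$, I apply the Codazzi equation $g(R(e_i,v)\partial_t,e_i)=(\nabla^{g_t}_{e_i}\II)(v,e_i)-(\nabla^{g_t}_v\II)(e_i,e_i)$ and sum over $i$; substituting $\II=\tfrac12 g_t'$ and using that $\sum_i(\nabla^{g_t}_v\II)(e_i,e_i)=\tfrac12 v(\mathrm{tr}_{g_t}g_t')$ produces the formula. Finally, for the tangential part, the Gauss equation expresses the tangential curvature of $g$ in terms of $R^{g_t}$ and $\II$, contributing $\Ric^{g_t}(u,v)$ together with the cross-terms $\tfrac12\sum_i g_t'(u,e_i)g_t'(v,e_i)$ and $-\tfrac14 g_t'(u,v)\mathrm{tr}_{g_t}g_t'$, while the normal curvature component $g(R(u,\partial_t)\partial_t,v)$, computed from the $t$-derivative of $\mathcal{S}_t$ as in the radial case, contributes the $-\tfrac12 g_t''(u,v)$ term.

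The main obstacle is not conceptual but bookkeeping: one must carefully track how the $t$-derivative interacts with the implicit dependence of the trace and the inner product on $g_t$, which is precisely where the factor $\tfrac14$ in the $\lVert g_t'\rVert_{g_t}^2$ terms arises. Once the Riccati, Gauss and Codazzi identities are written down and $\II=\tfrac12 g_t'$ is substituted, the rest reduces to direct algebraic manipulation.
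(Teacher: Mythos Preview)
Your proposal is correct and follows essentially the same approach as the paper: both derive $\II=\tfrac12 g_t'$ from the vanishing of $[\partial_t,\cdot]$ for $t$-independent extensions, and then obtain the three Ricci formulas from the Gauss (tangential), Codazzi (normal), and Riccati (radial) curvature equations. The only difference is presentational---the paper writes out each curvature component $g(R(\cdot,\cdot)\cdot,\cdot)$ explicitly before tracing, while you phrase the radial part in operator form via $\nabla_{\partial_t}\mathcal{S}_t+\mathcal{S}_t^2=-R(\cdot,\partial_t)\partial_t$ and track the $t$-dependence of $g_t^{-1}$ directly---but the underlying computation is the same.
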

	\begin{proof}
		We extend $u$ and $v$ along normal coordinates in $M$ around $x$ and constant in $t$-direction. Then $[u,\partial_t]=[v,\partial_t]=0$ by the product structure and hence the second fundamental form of the hypersurface $\{t\}\times M$ is given by
		\begin{align*}
			\II(u,v)&=\tfrac{1}{2}(\II(u,v)+\II(v,u))=\tfrac{1}{2}(g_t(\nabla_u \partial_t,v)+g_t(u,\nabla_v \partial_t))=\tfrac{1}{2}(g_t(\nabla_{\partial_t} u,v)+g_t(u,\nabla_{\partial_t} v))\\
			&=\tfrac{1}{2}\left(\tfrac{\partial}{\partial t}g_t(u,v)-g_t(u,\nabla_{\partial_t} v)+g_t(u,\nabla_{\partial_t} v)\right)=\tfrac{1}{2}g_t^\prime(u,v).
		\end{align*}
		From the tangential curvature equation we now obtain for $w\in T_xM$
		\begin{align}
			\label{EQ:TANG_CURV}
			g(R(u,v)v,w)=g_t(R^{g_t}(u,v)v,w)-\frac{1}{4}(g_t^\prime(u,w)g_t^\prime(v,v)-g_t^\prime(u,v)g_t^\prime(w,v)).
		\end{align}
		Next we apply the normal curvature equation and use that $u$ and $v$ are vector fields with respect to normal coordinates around $x$, so that any covariant derivatives involving $u$ and $v$ vanish at $x$. Hence, we obtain:
		\begin{align}
			\notag g(R(u,v)v,\partial_t)&=-(\nabla_u\II)(v,v)+(\nabla_v\II)(u,v)\\
			\notag&=-u(\II(v,v))+\frac{1}{2}(\nabla_vg_t')(u,v)\\
			\label{EQ:NORMAL_CURV}&=-\frac{1}{2}u(g_t'(v,v))+\frac{1}{2}(\nabla_vg_t')(u,v).
		\end{align}
		Denote by $S=\nabla_{\cdot}\partial_t$ the shape operator of the hypersurface $\{t\}\times M$, i.e.\ $\II=g_t(S\cdot,\cdot)$. Then, by the radial curvature equation, we have
		\begin{align}
			\label{EQ:RADIAL_CURV1}
			\notag g(R(u,\partial_t)\partial_t,v)&=-g_t(S^2(u),v)-g_t((\nabla_{\partial_t} S)(u),v)\\
			\notag &=-g_t(S^2(u),v)-g_t(\nabla_{\partial_t}(S(u)),v)+g_t(S(\nabla_{\partial_t} u),v)\\
			\notag &=-g_t(S^2(u),v)-\frac{\partial}{\partial t}g_t(S(u),v)+g_t(S(u),\nabla_{\partial_t} u)+g_t(S(\nabla_u \partial_t),v)\\
			\notag &=-\frac{\partial}{\partial t}\II(u,v)+g_t(S(u),S(v))\\
			\notag &=-\frac{1}{2}g_t^{\prime\prime}(u,v)+g_t\left(\sum_i g_t(S(u),e_i)e_i,\sum_j g_t(S(v),e_j)e_j\right)\\
			&=-\frac{1}{2}g_t^{\prime\prime}(u,v)+\frac{1}{4}\sum_i g_t^\prime(u,e_i)g_t^\prime(v,e_i).
		\end{align}
		Finally, we have
		\begin{align}
			\label{EQ:RADIAL_CURV2}
			g(R(u,\partial_t)\partial_t,\partial_t)=0
		\end{align}
		by the skew symmetry property of the Riemann curvature tensor.
		
		The formulas for the Ricci curvatures now directly follow from \eqref{EQ:TANG_CURV}--\eqref{EQ:RADIAL_CURV2}.
	\end{proof}
	

	We obtain the following consequence (see also \cite[Section 4.2.4]{Pe16}).

	\begin{lemma}\label{L:doubly_warped_curv}
		Let $g=dt^2+f(t)^2ds_p^2+h(t)^2ds_q^2$ be a doubly warped product metric on $[0,t_0]\times S^p\times S^q$. Let $u,u_1,u_2$ and $v,v_1,v_2$ be unit tangent vectors of $(S^p,ds_p^2)$ and $(S^q,ds_q^2)$, respectively. Then the sectional curvatures of $g$ are given as follows:
		\begin{align*}
			\sec(\partial_t\wedge u)&=-\frac{f''}{f},\\
			\sec(\partial_t\wedge v)&=-\frac{h''}{h},\\
			\sec(u\wedge v)&=-\frac{f'h'}{fh},\\
			\sec(u_1\wedge u_2)&=\frac{1-{f'}^2}{f^2},\\
			\sec(v_1\wedge v_2)&=\frac{1-{h'}^2}{h^2}
		\end{align*}
		and all other sectional curvatures are convex combinations of these expressions. Further, the Ricci curvatures of $g$ are given as follows:
		\begin{align*}
			\Ric(\partial_t,\partial_t)&=-p\frac{f''}{f}-q\frac{h''}{h},\\
			\Ric(\tfrac{u}{f},\tfrac{u}{f})&=-\frac{f''}{f}+(p-1)\frac{1-{f'}^2}{f^2}-q\frac{f'h'}{fh},\\
			\Ric(\tfrac{v}{h},\tfrac{v}{h})&=-\frac{h''}{h}+(q-1)\frac{1-{h'}^2}{h^2}-p\frac{f'h'}{fh},\\
			\Ric(\partial_t,u)&=\Ric(\partial_t,v)=\Ric(u,v)=0.
		\end{align*}
	\end{lemma}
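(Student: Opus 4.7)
The plan is to specialise Lemma~\ref{L:curv_form} to the family $g_t=f(t)^2ds_p^2+h(t)^2ds_q^2$, which for each fixed $t$ is the Riemannian product of round spheres of radii $f(t)$ and $h(t)$. First I would record
\[ g_t' = 2ff'\, ds_p^2 + 2hh'\, ds_q^2, \qquad g_t'' = 2({f'}^2 + ff'')\, ds_p^2 + 2({h'}^2 + hh'')\, ds_q^2, \]
and work in a $g_t$-orthonormal frame of the form $\{u_i/f\}\cup\{v_j/h\}$ at each $(x,y)\in S^p\times S^q$, where $\{u_i\}$ and $\{v_j\}$ are orthonormal in $ds_p^2$ and $ds_q^2$. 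In this frame one gets $\operatorname{tr}_{g_t}g_t'=2pf'/f+2qh'/h$ and $\|g_t'\|^2_{g_t}=4p(f'/f)^2+4q(h'/h)^2$.

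For the Ricci curvatures I would substitute these data into the three formulas of Lemma~\ref{L:curv_form}. The expression for $\Ric(\partial_t,\partial_t)$ then drops out after cancelling ${f'}^2/f^2$ against $(f'/f)^2$. Substituting into the tangential Ricci formula and using that $\Ric^{g_t}(u/f,u/f)=(p-1)/f^2$ (since $(S^p,f^2ds_p^2)$ is a round sphere of radius $f$ and $g_t$ is a Riemannian product), together with the identity $(p-1)/f^2-(p-1)(f'/f)^2=(p-1)(1-{f'}^2)/f^2$, yields the formula for $\Ric(u/f,u/f)$; the formula for $\Ric(v/h,v/h)$ follows by symmetry. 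The cleanest way to dispatch the mixed terms $\Ric(\partial_t,u)$, $\Ric(\partial_t,v)$, $\Ric(u,v)$ is to observe that $g$ is invariant under the isometric action of $O(p+1)\times O(q+1)$, whose isotropy at $(x,y)$ acts irreducibly on each of $T_xS^p$ and $T_yS^q$; Schur's lemma then forces $\Ric$ to be block-diagonal with respect to the splitting $\R\partial_t\oplus T_xS^p\oplus T_yS^q$, so all three mixed terms vanish.

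For the sectional curvatures I would read the required values off the intermediate equations \eqref{EQ:TANG_CURV} and \eqref{EQ:RADIAL_CURV1} in the proof of Lemma~\ref{L:curv_form}. Equation \eqref{EQ:RADIAL_CURV1} applied with $u=v=u_i/f$ collapses to $\sec(\partial_t\wedge u)=-f''/f$ after the same ${f'}^2/f^2$ versus $(f'/f)^2$ cancellation, and analogously $\sec(\partial_t\wedge v)=-h''/h$. Equation \eqref{EQ:TANG_CURV} applied to a pair of unit vectors within the same sphere factor uses that $(S^p,f^2ds_p^2)$ has constant curvature $1/f^2$, giving $(1-{f'}^2)/f^2$; applied to a mixed pair it uses that a Riemannian product has vanishing mixed sectional curvature, giving $-f'h'/(fh)$. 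For the convex-combination claim I would expand $R(X,Y,Y,X)$ for an arbitrary orthonormal pair $X=a\partial_t+U+V$, $Y=a'\partial_t+U'+V'$ with $U,U'\in TS^p$ and $V,V'\in TS^q$. The same isotropy symmetry ensures that the only nonvanishing components of $R$ in the adapted frame are those determined (up to the algebraic symmetries of $R$) by the five computed sectional curvatures, so the expansion becomes a sum of the five listed expressions weighted by squared areas of the constituent coordinate $2$-planes in $X\wedge Y$, which are non-negative and sum to $\|X\wedge Y\|^2=1$.

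The substantive conceptual point is the symmetry argument that simultaneously kills the mixed Ricci components and reduces $R$ to the five ``pure'' sectional curvatures; everything else is algebraic bookkeeping with the $1/f$ and $1/h$ normalisation factors, so I expect no genuine obstacle beyond keeping track of these scalings.
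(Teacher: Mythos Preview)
Your approach is correct and matches the paper's: the paper presents this lemma simply as a consequence of Lemma~\ref{L:curv_form} (together with a reference to \cite[Section 4.2.4]{Pe16}) without further detail, and you carry out exactly this specialisation. Your symmetry argument via the $O(p+1)\times O(q+1)$ isotropy is a clean way to handle the mixed Ricci terms and the convex-combination claim; the latter amounts to the fact that the curvature operator is block-scalar on the decomposition $\Lambda^2 = (\R\partial_t\wedge TS^p)\oplus(\R\partial_t\wedge TS^q)\oplus\Lambda^2 TS^p\oplus\Lambda^2 TS^q\oplus(TS^p\wedge TS^q)$, which your sign-change argument under individual reflections $e_i\mapsto -e_i$ establishes.
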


	Given a doubly warped product metric $g=dt^2+f(t)^2ds_p^2+h(t)^2ds_q^2$, we can collapse the sphere $S^p$ at $t=0$ and the sphere $S^q$ at $t=t_0$ to obtain the sphere $S^{p+q+1}$. The metric $g$ then defines a smooth metric on this collapsed space if and only if
	\begin{enumerate}
		\item[\mylabel{EQ:dw_boundary_0}{(DW1)}] $f$ is an odd function at $t=0$ with $f'(0)=1$ and $h$ is an even function at $t=0$,
		\item[\mylabel{EQ:dw_boundary_t0}{(DW2)}] $f$ is an even function at $t=t_0$ and $h$ is an odd function at $t=t_0$ with $h'(t_0)=-1$,
	\end{enumerate}
	see e.g.\ \cite[Propositions 1.4.7 and 1.4.8]{Pe16}. For example, if we set $f(t)=\tfrac{2t_0}{\pi}\sin(\tfrac{\pi}{2t_0}t)$ and $h(t)=\tfrac{2t_0}{\pi}\cos(\tfrac{\pi}{2t_0}t)$, the metric $g$ is a scalar multiple of the round metric on $S^{p+q+1}$.
	
	\begin{lemma}\label{L:sphere_dbl_warped}
		A doubly warped product metric $g=dt^2+f(t)^2ds_p^2+h(t)^2ds_q^2$ on $S^{p+q+1}$ has positive sectional curvature if and only if $f''$ and $h''$ are strictly negative on $(0,t_0]$ and $[0,t_0)$, respectively, and $f'''(0)$, $h'''(t_0)<0$. In particular, the space of doubly warped product metrics of positive sectional curvature on $S^{p+q+1}$ is path-connected.
	\end{lemma}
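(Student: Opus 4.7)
The strategy is to reduce positive sectional curvature of $g$ to pointwise inequalities on $f$, $h$ and their derivatives via Lemma \ref{L:doubly_warped_curv}, handle the degeneration at the collapsing points $t=0$ and $t=t_0$ by Taylor expansion, and finally exhibit an explicit straight-line homotopy to the round sphere.

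First I would use that, by Lemma \ref{L:doubly_warped_curv}, every sectional curvature of $g$ on the open region $(0,t_0)\times S^p\times S^q$ is a convex combination of the five quantities $-f''/f$, $-h''/h$, $-f'h'/(fh)$, $(1-(f')^2)/f^2$ and $(1-(h')^2)/h^2$, so positive sectional curvature there is equivalent to strict positivity of each. The first two directly give $f''<0$ and $h''<0$ on $(0,t_0)$. The boundary conditions \ref{EQ:dw_boundary_0} and \ref{EQ:dw_boundary_t0} force $f'(0)=1$, $f'(t_0)=0$, $h'(0)=0$ and $h'(t_0)=-1$, and strict concavity then forces $f'$ to strictly decrease from $1$ to $0$ and $h'$ to strictly decrease from $0$ to $-1$. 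Hence $0<f'<1$, $-1<h'<0$, and $f'h'<0$ on the interior, so the remaining three inequalities follow at no cost.

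Next I would analyze what happens as $t\to 0$ and $t\to t_0$, where $f$ and $h$ respectively vanish and several curvature expressions become indeterminate. Condition \ref{EQ:dw_boundary_0} says $f$ extends to an odd function about $0$ with $f'(0)=1$, so $f(t)=t+\tfrac{1}{6}f'''(0)\,t^3+O(t^5)$; a direct computation then shows that both $-f''/f$ and $(1-(f')^2)/f^2$ tend to $-f'''(0)$ as $t\to 0$, while $-f'h'/(fh)\to -h''(0)/h(0)>0$, using that $h''$ extends continuously across $t=0$ with the sign inherited from the interior. This pins down the sign condition on $f'''(0)$. An analogous Taylor expansion of $h$ around $t_0$ yields the companion condition on $h'''(t_0)$, and combining with the interior analysis gives the claimed equivalence.

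For the \emph{in particular} statement, I would connect any admissible data $(f,h,t_0)$ to a fixed round sphere. After linearly rescaling $t$ to normalize $t_0$ to $\pi/2$, take the straight-line homotopy $f_s=(1-s)f+s\sin t$, $h_s=(1-s)h+s\cos t$ for $s\in[0,1]$. Each condition in the characterization is convex in $(f,h)$: the values in \ref{EQ:dw_boundary_0} and \ref{EQ:dw_boundary_t0} are affine constraints, strict negativity of $f''_s$ and $h''_s$ on the respective intervals is preserved by convex combinations of strictly negative functions, and the sign conditions on the third derivatives at the endpoints are preserved for the same reason. Thus the homotopy stays inside the space of doubly warped positive sectional curvature metrics. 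The main technical point throughout is the indeterminate behaviour of the formulas of Lemma \ref{L:doubly_warped_curv} at the collapsing points; once these limits are extracted by Taylor expansion, the remainder of the argument is essentially formal.
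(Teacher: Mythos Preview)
Your argument is correct and follows essentially the same route as the paper: reduce to the five curvature expressions of Lemma~\ref{L:doubly_warped_curv}, observe that $f'',h''<0$ together with the boundary conditions force $f'\in(0,1)$ and $h'\in(-1,0)$ on the interior so the remaining three inequalities come for free, and handle the collapsing endpoints by passing to the limit (the paper phrases this as l'H\^opital's rule, you use Taylor expansion, which amounts to the same thing). For path-connectedness the paper simply notes that all the relevant conditions are preserved under convex combinations; your version is slightly more explicit in first normalising $t_0$ before interpolating, which is a point the paper leaves implicit. One small omission: you derive $f'',h''<0$ only on the open interval $(0,t_0)$, whereas the statement asks for $(0,t_0]$ and $[0,t_0)$; the missing endpoint values follow immediately from $-f''(t_0)/f(t_0)>0$ and $-h''(0)/h(0)>0$, since $f(t_0),h(0)>0$.
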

	\begin{proof}
		It follows from Lemma \ref{L:doubly_warped_curv} that a necessary condition of $g$ to have positive sectional curvature is given by $f''(t),h''(t)<0$ whenever $f(t), h(t)\neq 0$, and, by l'Hôpital's rule, $f'''(0),h'''(t_0)<0$. Conversely, if this is satisfied, it follows from the boundary conditions (1) and (2) that $f'\in[0,1]$ and $h'\in[-1,0]$, where the boundary points of the intervals are only attained at $t=0,t_0$. It follows from Lemma~\ref{L:doubly_warped_curv} that $g$ has positive sectional curvature on $(0,t_0)$ and, by applying l'Hôpital's rule, at $t=0,t_0$ as well.
		
		Finally, that the space of doubly warped product metrics on $S^{p+q+1}$ of positive sectional curvature is path-connected now follows from the fact that the properties $f'',h''<0$ on $(0,t_0]$ resp.\ $[0,t_0)$ and $f'''(0),h'''(t_0)<0$ as well as the boundary conditions (1) and (2) are preserved under convex combinations.
	\end{proof}

	\subsection{Metric induced by the graph of a function}\label{SS:cut_graph}
	
	Let $(M,g)$ be a Riemannian manifold, let $f\colon\R\to(0,\infty)$ be a smooth function and let $\rho\colon M\to\R$ be smooth. We consider the embedding
	\begin{align*}
		M&\hookrightarrow \R\times M\\
		x&\mapsto (\rho(x),x)
	\end{align*}
	and denote by $M^\prime$ the image of this embedding. Then $M'$ is a hypersurface in $\R\times M$. We equip $\R\times M$ with the metric $\overline{g}=dt^2+f(t)^2 g$ and we will be interested in the induced metric on $M^\prime$, as well as its second fundamental form.
	
	For that, first note that the differential of $\rho\times \mathrm{id}_M$ is given by $d\rho\times \mathrm{id}$, so
	\[TM^\prime=\{d\rho(X)\partial_t+X\mid X\in TM \}. \]
	Hence, the upward pointing normal vector field on $M^\prime$ is given by
	\[\nu=\frac{1}{\sqrt{1+\frac{\lVert\nabla\rho\rVert^2}{f^2}}}\left(\partial_t-\frac{1}{f^2}\nabla\rho\right). \]
	Here $\nabla\rho$ is the gradient with respect to $g$ and the norm is taken with respect to the metric $g$.
	
	\begin{lemma}\label{L:II_graph}
		For $x\in M$ the second fundamental form of $M'$ at $(\rho(x),x)$ with respect to $\nu$ is given as follows:
		\[ \II(X(\rho)\partial_t+X,Y(\rho)\partial_t+Y)=\frac{1}{\sqrt{1+\frac{\rVert\nabla\rho\rVert^2}{f^2}}}\left( f'fg(X,Y)+2X(\rho)Y(\rho)\frac{f'}{f}-XY(\rho)+\nabla_X Y(\rho) \right). \]
	\end{lemma}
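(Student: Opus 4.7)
The plan is to compute $\II$ directly via the identity
\[ \II(U,V) = -\overline{g}(\nu, \overline{\nabla}_U V), \]
valid for any vector fields $U, V$ tangent to $M'$, which follows by differentiating the tangency condition $\overline{g}(\nu, V) = 0$ along $U$. This form is convenient because the normalizing constant in $\nu$ enters only as an overall factor at the end, so most of the work is done with the unnormalized normal $\partial_t - \tfrac{1}{f^2}\nabla\rho$.

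First I would record the standard Levi-Civita connection identities for the warped product $\overline{g} = dt^2 + f(t)^2 g$ on $\R \times M$, which follow from the Koszul formula: for horizontal lifts $X, Y$ of vector fields on $M$ one has $\overline{\nabla}_{\partial_t}\partial_t = 0$, $\overline{\nabla}_{\partial_t} X = \overline{\nabla}_X \partial_t = \tfrac{f'}{f} X$, and $\overline{\nabla}_X Y = \nabla_X Y - f f' g(X,Y) \partial_t$. Next I would extend $U = X(\rho)\partial_t + X$ and $V = Y(\rho)\partial_t + Y$ to vector fields on all of $\R \times M$ by letting $X, Y$ be $t$-independent horizontal lifts and pulling the coefficient functions $X(\rho), Y(\rho)$ back along the projection onto $M$. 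Along $M'$ these extensions agree with the tangent extensions of $U, V$, which is enough to compute $\overline{g}(\nu, \overline{\nabla}_U V)$ at points of $M'$, since $\overline{\nabla}_U V$ at such a point depends only on $V$ along a curve in $M'$ with tangent $U$.

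Applying the connection formulas, together with $\partial_t(X(\rho)) = \partial_t(Y(\rho)) = 0$, yields
\[ \overline{\nabla}_U V = \bigl(XY(\rho) - f f' g(X,Y)\bigr)\partial_t + \tfrac{f'}{f}\bigl(X(\rho) Y + Y(\rho) X\bigr) + \nabla_X Y. \]
Pairing this against $\nu$ using $\overline{g}(\partial_t,\partial_t) = 1$, $\overline{g}(\partial_t, TM) = 0$, and $\overline{g}(A,B) = f^2 g(A,B)$ for $A, B$ horizontal, together with the gradient identity $g(\nabla\rho, Z) = Z(\rho)$ to rewrite the $\nabla\rho$-contractions, gives precisely the four claimed terms after negating and dividing by $\sqrt{1 + \|\nabla\rho\|^2/f^2}$.

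I expect the main obstacle to be careful bookkeeping rather than any conceptual difficulty. In particular, the cross term $2\tfrac{f'}{f}X(\rho)Y(\rho)$ is assembled from the $\tfrac{f'}{f}(X(\rho) Y + Y(\rho) X)$ piece of $\overline{\nabla}_U V$ paired with the $-\tfrac{1}{f^2}\nabla\rho$ component of $\nu$ via $g(\nabla\rho, Z) = Z(\rho)$, and the factor $f^2$ in $\overline{g}|_{TM}$ cancels neatly with the $\tfrac{1}{f^2}$ in $\nu$. Keeping the signs aligned with the paper's convention $\II(u,v) = g(\nabla_u \nu, v)$ and the upward-pointing choice of $\nu$ requires only the single overall minus sign in $\II(U,V) = -\overline{g}(\nu, \overline{\nabla}_U V)$, and no further sign adjustments at the end.
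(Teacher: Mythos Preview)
Your proof is correct and computes the same quantity as the paper, but via the dual route: the paper differentiates the normal, computing $\overline{\nabla}_X\nu$ and $\overline{\nabla}_{\partial_t}\nu$ explicitly and then pairing with $Y(\rho)\partial_t+Y$, whereas you differentiate the tangent field $V$ and pair with $\nu$. Your version is a bit more economical, since the normalizing factor $(1+\lVert\nabla\rho\rVert^2/f^2)^{-1/2}$ in $\nu$ never gets differentiated and simply comes out as an overall constant; in the paper's computation the derivative of this factor does appear (the $\tfrac{1}{f^2+\lVert\nabla\rho\rVert^2}\,g(\nabla_X\nabla\rho,\nabla\rho)$ terms), but those contributions are proportional to $\nu$ itself and hence vanish when paired with tangent vectors. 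Your extension argument---that $\overline{\nabla}_U V$ at a point of $M'$ depends only on $V$ along a curve in $M'$, where your extension agrees with a tangent extension---is exactly what is needed to justify using the ambient $t$-independent extension. Both routes rest on the same warped-product Koszul identities, so the difference is purely organizational.
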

	\begin{proof}
		Let $X$ and $Y$ be local vector fields in $M$, which we extend constantly in $t$-direction to local vector fields of $\R\times M$. From the Koszul formula we now obtain the following:
		\begin{align*}
			\overline{\nabla}_{\partial_t}\partial_t&=0\\
			\overline{\nabla}_{X}Y&=-f^\prime fg(X,Y)\partial_t+\nabla_{X}Y\\
			\overline{\nabla}_{\partial_t}X&=\overline{\nabla}_{X}\partial_t=\frac{f^\prime}{f}X.
		\end{align*}
		Here $\overline{\nabla}$ and $\nabla$ denote the Levi--Civita connections of $\overline{g}$ and $g$, respectively.
		
		A calculation then shows the following:
		\begin{align*}
			\overline{\nabla}_{X}\nu&=\frac{1}{\sqrt{1+\frac{\lVert\nabla\rho\rVert^2}{f^2}}}\left(-\frac{1}{f^2+\lVert\nabla\rho\rVert^2}g\left(\nabla_{X}\nabla\rho,\nabla\rho\right)\left(\partial_t-\frac{1}{f^2}\nabla\rho\right)+\frac{f^\prime}{f}X+\frac{f'}{f}X(\rho)\partial_t-\frac{1}{f^2}\nabla_X\nabla\rho \right),\\
			\overline{\nabla}_{\partial_t}\nu&=\frac{1}{\sqrt{1+\frac{\lVert\nabla\rho\rVert^2}{f^2}}}\left(\frac{\lVert\nabla\rho\rVert^2}{f^2+\lVert\nabla\rho\rVert^2}\frac{f^\prime}{f}\left(\partial_t-\frac{1}{f^2}\nabla\rho\right)+\frac{f^\prime}{f^3}\nabla\rho\right).
		\end{align*}
		Hence, we obtain
		\begin{align*}
			\II(X(\rho)\partial_t+X,Y(\rho)\partial_t+Y)=&\,\overline{g}\left(\overline{\nabla}_{X(\rho)\partial_t+X}\nu,Y(\rho)\partial_t+Y\right)\\
			=&\,\overline{g}\left(\overline{\nabla}_X \nu,Y(\rho)\partial_t+Y\right)+X(\rho)\overline{g}\left(\overline{\nabla}_{\partial_t}\nu,Y(\rho)\partial_t+Y\right) \\
			=&\,\frac{1}{\sqrt{1+\frac{\lVert\nabla\rho\rVert^2}{f^2}}}\left( f'fg(X,Y)+X(\rho)Y(\rho)\frac{f'}{f}-g(\nabla_X\nabla\rho,Y)+X(\rho)Y(\rho)\frac{f'}{f} \right)\\
			=&\,\frac{1}{\sqrt{1+\frac{\lVert\nabla\rho\rVert^2}{f^2}}}\left( f'fg(X,Y)+2X(\rho)Y(\rho)\frac{f'}{f}-XY(\rho)+\nabla_X Y(\rho) \right).
		\end{align*}
	\end{proof}
	
	We now restrict to the case where $M=S^n$ and $g$ is a warped product metric of the form
	\[ g=ds^2+R(s)^2ds_{n-1}^2, \]
	where $s\in[0,s_0]$ and $R\colon[0,s_0]\to[0,\infty)$ is an odd function at $s=0$ and $s=s_0$ with $R'(0)=1$, $R'(s_0)=-1$ and $R|_{(0,s_0)}>0$. We also assume that $\rho$ now only depends on $s$, i.e.\ there exists $\alpha\colon[0,s_0]\to\R$ with $\rho(s,x)=\alpha(s)$ for all $(s,x)\in[0,s_0]\times S^{n-1}$. Then $\nabla\rho=\alpha'\partial_s$, so we have
	\[ TM'=\langle \partial_s+\alpha'\partial_t\rangle\oplus TS^{n-1}. \]
	Then we obtain from Lemma \ref{L:II_graph} the following expressions for the second fundamental form.
	\begin{lemma}\label{L:II_graph_warped}
		In the warped product case $g=ds^2+R(s)^2ds_{n-1}^2$ we have the following, where $X$ and $Y$ are tangent vectors tangent to $S^{n-1}$:
		\begin{align*}
			\II(\partial_s+\alpha'\partial_t,\partial_s+\alpha'\partial_t)&=\frac{1}{\sqrt{1+\frac{{\alpha'}^2}{f^2}}}\left( f'f+2\frac{f'}{f}{\alpha'}^2-\alpha'' \right),\\
			\II(\partial_s+\alpha'\partial_t,X)&=0,\\
			\II(X,Y)&=\frac{g(X,Y)}{\sqrt{1+\frac{{\alpha'}^2}{f^2}}}\left( f'f-\frac{R'}{R}\alpha' \right).
		\end{align*}
	\end{lemma}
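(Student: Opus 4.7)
The proof is a direct substitution into the general formula from Lemma \ref{L:II_graph}, using the structure of the warped product metric and the fact that $\rho$ depends only on the radial coordinate $s$. The main step is simply to identify all the relevant quantities.

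First I would compute $\nabla\rho$ and its norm. Since $\rho(s,x)=\alpha(s)$ depends only on $s$ and $g(\partial_s,\partial_s)=1$ with $\partial_s$ orthogonal to the $S^{n-1}$ factor, we get $\nabla\rho=\alpha'\partial_s$ and $\lVert\nabla\rho\rVert^2=(\alpha')^2$. This immediately yields the denominator $\sqrt{1+(\alpha')^2/f^2}$ appearing in each of the three claimed formulas.

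Next I would record the Levi--Civita connection of the warped product $g=ds^2+R(s)^2ds_{n-1}^2$, which is standard: $\nabla_{\partial_s}\partial_s=0$, $\nabla_{\partial_s}Y=(R'/R)Y$ for $Y$ tangent to $S^{n-1}$, and $\nabla_X Y=\nabla^{S^{n-1}}_X Y-(R'/R)g(X,Y)\partial_s$ for $X,Y$ tangent to $S^{n-1}$. I would also note that $X(\rho)=0$ for any $X$ tangent to $S^{n-1}$, since $\rho$ is independent of the $S^{n-1}$ variables, while $\partial_s(\rho)=\alpha'$.

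Then the three cases are obtained by plugging into Lemma \ref{L:II_graph}. Taking $X=Y=\partial_s$ gives $X(\rho)=Y(\rho)=\alpha'$, $XY(\rho)=\alpha''$ and $\nabla_X Y(\rho)=0$, which yields the first formula. Taking $X=\partial_s$ and $Y$ tangent to $S^{n-1}$ gives $Y(\rho)=0$, $XY(\rho)=0$, and $\nabla_{\partial_s}Y(\rho)=(R'/R)Y(\rho)=0$; combined with $g(\partial_s,Y)=0$ this makes every term vanish, proving the second identity. Finally, for $X,Y$ tangent to $S^{n-1}$ we have $X(\rho)=Y(\rho)=0$ and $XY(\rho)=0$, while $\nabla_X Y(\rho)=-(R'/R)g(X,Y)\alpha'$ since only the $\partial_s$-component of $\nabla_X Y$ contributes; substituting gives the third formula.

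There is no genuine obstacle here: the only point requiring a touch of care is remembering that $\nabla_X Y$ in Lemma \ref{L:II_graph} refers to the Levi--Civita connection of $g$ on $M=S^n$, so one must use the full warped-product connection (not the induced connection on $S^{n-1}$) when $X,Y$ are tangent to the sphere factor, which is precisely what produces the $-(R'/R)\alpha'$ contribution.
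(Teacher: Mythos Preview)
Your proposal is correct and matches the paper's approach: the paper simply states that the formulas follow from Lemma~\ref{L:II_graph} without writing out the details, and your substitution (computing $\nabla\rho=\alpha'\partial_s$, using the warped-product connection formulas, and plugging into the three cases) is exactly the computation that verifies this.
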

	
	For the construction in Section \ref{S:build_block} the following choice of $f$ will be useful.
	\begin{lemma}\label{L:fH}
		Let $0<\lambda_1<\lambda_2<1$ and $\delta>0$ sufficiently small. Then there exists a smooth function $f\colon[-\delta,\infty)\to(0,\infty)$ with the following properties:
		\begin{enumerate}
			\item $f(0)=1$, $f'(0)=\lambda_2$,
			\item $f''<0$,
			\item $f'>\lambda_1$ and $\frac{f'(t)}{f(t)}>\frac{\lambda_1}{1+\lambda_1 t}$.
		\end{enumerate}
	\end{lemma}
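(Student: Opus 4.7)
The plan is to give an explicit ansatz. The constraints $f''<0$ and $f'>\lambda_1$ force $f'$ to be strictly decreasing with a limit $\ell\in[\lambda_1,\lambda_2)$, and an asymptotic check shows that if $\ell=\lambda_1$ then the log-derivative condition $f'/f>\lambda_1/(1+\lambda_1 t)$ must fail as $t\to\infty$ (the two sides agree to leading order in $1/t$, with the subleading term going the wrong way). So I would arrange $\ell$ strictly above $\lambda_1$ by setting
\[
f(t) \;=\; 1 + \lambda_1' t + \frac{\lambda_2-\lambda_1'}{a}\bigl(1-e^{-at}\bigr),
\]
with parameters $\lambda_1'\in(\lambda_1,\lambda_2)$ and $a>0$ to be chosen. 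Then $f'(t)=\lambda_1'+(\lambda_2-\lambda_1')e^{-at}$ interpolates smoothly from $\lambda_2$ at $t=0$ down to $\lambda_1'$ at infinity, and $f''(t)=-a(\lambda_2-\lambda_1')e^{-at}<0$ throughout $\R$.

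Conditions (i) and (ii) are then immediate from the formula, and $f'\geq\lambda_1'>\lambda_1$ on $[0,\infty)$ (with $f'>\lambda_2>\lambda_1$ for $t<0$), giving the first half of (iii). For the log-derivative bound I would introduce
\[
g(t)\;=\;f'(t)(1+\lambda_1 t)-\lambda_1 f(t),
\]
so the condition becomes $g>0$. A direct differentiation yields $g'(t)=f''(t)(1+\lambda_1 t)$, which is strictly negative on $(-1/\lambda_1,\infty)$; hence $g$ is strictly decreasing there. Since $g(0)=\lambda_2-\lambda_1>0$, the inequality automatically holds on any $[-\delta,0]$ with $\delta<1/\lambda_1$, and on $[0,\infty)$ it suffices to arrange
\[
\lim_{t\to\infty} g(t) \;=\; (\lambda_1'-\lambda_1)-\frac{\lambda_1(\lambda_2-\lambda_1')}{a}\;>\;0,
\]
which amounts to $a>\lambda_1(\lambda_2-\lambda_1')/(\lambda_1'-\lambda_1)$. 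Choosing, for instance, $\lambda_1'=(\lambda_1+\lambda_2)/2$ and $a=2\lambda_1$ does the job.

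Finally I would shrink $\delta$ if necessary so that in addition $f>0$ on $[-\delta,0]$, which is possible by continuity since $f(0)=1$. The only conceptual obstacle in the argument is the initial observation that $f'$ cannot tend to $\lambda_1$ at infinity; once that is recognised, the rest is an elementary calculation with a single exponential correction added to the line $1+\lambda_1' t$.
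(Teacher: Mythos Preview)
Your proof is correct and essentially coincides with the paper's approach. The paper writes $f(t)=(\lambda_2-\lambda')H(t)+\lambda' t+1$ with $H(t)=1-e^{-t}$ and a suitable $\lambda'\in(\lambda_1,\lambda_2)$, which is exactly your ansatz in the special case $a=1$ (with $\lambda'=\lambda_1'$); your extra parameter $a$ just gives a slightly larger family of admissible functions, and your verification via the monotone auxiliary function $g(t)=f'(t)(1+\lambda_1 t)-\lambda_1 f(t)$ is a clean repackaging of the same inequality the paper checks.
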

	\begin{proof}
		We set $f(t)=(\lambda_2-\lambda')H(t)+\lambda' t+1$ for some function $H\colon[-\delta,\infty)\to(0,\infty)$ and $\lambda'\in(\lambda_1,\lambda_2)$. A calculation shows that $f$ satisfies the required properties provided $H$ satisfies
		\begin{enumerate}
			\item $H(0)=0$, $H'(0)=1$,
			\item $H'>0$, $H''<0$,
			\item $H'(t)(1+\lambda_1 t)-\lambda_1 H(t)>\frac{\lambda_1-\lambda'}{\lambda_2-\lambda'}$.
		\end{enumerate}
		This is for example satisfied for $H(t)=1-e^{-t}$ and $\lambda'>\lambda_1\frac{1+\lambda_2}{1+\lambda_1}$.
	\end{proof}
	
	\subsection{Metrics of positive Ricci curvature on fibre bundles}\label{SS:fibre_bdls}
	
	In this subsection we review general facts on submersion metrics on fibre bundles in the context of positive Ricci curvature and extend the main construction of \cite{Re23} for doubly warped product metrics to the case of a (possibly non-trivial) fibre bundle. General references are \cite{Be87}, \cite{GW09}, \cite{Na79} and \cite{Po75}.
	
	Let $E\xrightarrow{\pi} B^q$ be a fibre bundle with fibre $F^p$ and structure group $G$. The \emph{vertical subbundle} of $TE$, denoted $\mathcal{V}$, is defined by $\mathcal{V}=\ker(d\pi)$. A subbundle $\mathcal{H}\subseteq TE$ of rank $q$ which is complementary to $\mathcal{V}$ at every point is a \emph{horizontal distribution}. Any horizontal distribution $\mathcal{H}$ is canonically isomorphic to $\pi^*TB$, thus defining a splitting
	\[ TE\cong \pi^*TB\oplus \mathcal{V}. \]
	For example, any Riemannian metric on $E$ induces a horizontal distribution via $\mathcal{H}=\mathcal{V}^\perp$.	
	
	Let $P\to B$ be the principal $G$-bundle associated to $\pi$, i.e.\ there is a bundle isomorphism $E\cong P\times_G F$. Then any principal connection $\theta\in\Omega^1(P)$ induces a horizontal distribution on $E$ obtained as the image of $\ker(\theta)\times TF\subseteq TP\times TF$ in $T(P\times_G F)$.
		
	Now given a Riemannian metric $\check{g}$ on $B$, a $G$-invariant Riemannian metric $\hat{g}$ on $F$ and a principal connection $\theta$ on $P$, there exists precisely one Riemannian metric $g$ on $E$ such that $(E,g)\to(B,\check{g})$ is a Riemannian submersion with totally geodesic fibres isometric to $(F,\hat{g})$ and horizontal distribution induced by $\theta$. This result is known as the \emph{Vilms construction}, see e.g.\ \cite[Theorem 9.59]{Be87}, \cite{Vi70}. The metric $g$ can explicitly be written as
	\begin{equation}
		g=\pi^*\check{g}+\mathrm{pr}_{\mathcal{V}}^*\hat{g},\label{EQ:submersion_metric}
	\end{equation}
	where $\mathrm{pr}_{\mathcal{V}}\colon TE\to\mathcal{V}$ denotes the projection onto $\mathcal{V}$ along the splitting $TE=\mathcal{H}\oplus\mathcal{V}$ and we identify $F$ with its image under its inclusion along a local trivialization.
	
	We will consider shrinking the metric in fibre direction, i.e.\ replacing $\hat{g}$ by $r^2\hat{g}$ for $r>0$. We denote the metric obtained in this way by $g_r$, or $g_r^\theta$ to specify the principal connection. Then the Ricci curvatures of $g_r$ are given as follows, see e.g.\ \cite[Proposition 9.70]{Be87}:
	\begin{align}
		\notag \Ric(U,V)&=\Ric_{\hat{g}}(U,V)+r^4(AU,AV),\\
		\Ric(U,X)&=r^2((\check{\delta}A)(X),U)\label{EQ:Ric_bundles}\\
		\notag \Ric(X,Y)&=\Ric_{\check{g}}(X,Y)-2r^2(A_X,A_Y).
	\end{align}
	Here $U,V$ and $X,Y$ are vertical and horizontal vectors, respectively, and $A$ denotes the $A$-tensor of $g_1$ (see e.g.\ \cite[Section 9.C]{Be87}). We obtain the following consequence:
	\begin{proposition}[{\cite[Proposition 9.70]{Be87}, \cite[Theorem 2.7.3]{GW09}}]\label{P:Ric_bundles}
		If $E$ is compact and $\check{g}$ and $\hat{g}$ have positive Ricci curvature, then there exists $r_0>0$ such that $g_r$ has positive Ricci curvature for all $r<r_0$.
	\end{proposition}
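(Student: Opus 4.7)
The plan is to verify positive-definiteness of the quadratic form $\Ric_{g_r}$ pointwise, exploiting the $g_r$-orthogonal decomposition $T_pE=\mathcal{H}_p\oplus\mathcal{V}_p$ and the three formulas \eqref{EQ:Ric_bundles}. For a tangent vector $Z=X+U\in T_pE$ with $X$ horizontal and $U$ vertical, bilinearity of Ricci together with \eqref{EQ:Ric_bundles} yields
\[
\Ric_{g_r}(Z,Z)=\Ric_{\check g}(X,X)-2r^2(A_X,A_X)+r^4(AU,AU)+2r^2((\check\delta A)(X),U)+\Ric_{\hat g}(U,U).
\]
The term $r^4(AU,AU)$ is nonnegative and can be discarded at the outset.

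Next, compactness of $E$ (and hence of $B$ and of each fibre of $\pi$) together with positivity of $\Ric_{\check g}$ and $\Ric_{\hat g}$ provide uniform constants $c_1,c_2>0$ such that $\Ric_{\check g}\ge c_2\check g$ and $\Ric_{\hat g}\ge c_1\hat g$. Since the $A$-tensor of $g_1$ is a continuous tensor on the compact manifold $E$, one also obtains constants $K_1,K_2>0$ with
\[
(A_X,A_X)\le K_1|X|_{\check g}^2\quad\text{and}\quad |((\check\delta A)(X),U)|\le K_2|X|_{\check g}|U|_{\hat g},
\]
the second by Cauchy--Schwarz. Writing $x=|X|_{\check g}$ and $u=|U|_{\hat g}$, the estimate becomes
\[
\Ric_{g_r}(Z,Z)\ge (c_2-2r^2K_1)x^2-2r^2K_2\,xu+c_1u^2.
\]
For $r$ sufficiently small, both diagonal entries of the associated symmetric matrix are positive and its determinant equals $c_1c_2+O(r^2)>0$, so this quadratic form in $(x,u)$ is positive definite. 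Choosing $r_0>0$, depending only on $c_1,c_2,K_1,K_2$, so that these conditions hold uniformly, gives $\Ric_{g_r}(Z,Z)>0$ for every nonzero $Z$ and every $r\in(0,r_0)$.

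There is no genuine obstacle here: once \eqref{EQ:Ric_bundles} is in hand, the argument reduces to two-variable linear algebra, and the only delicate point is the uniformity of the bounds across $E$, which compactness supplies at once. The conceptual mechanism behind the estimate is that every $A$-tensor contribution carries at least one factor of $r^2$, so these perturbative terms become negligible compared with the $r$-independent positive Ricci curvatures $\Ric_{\check g}(X,X)$ and $\Ric_{\hat g}(U,U)$ as $r\downarrow 0$.
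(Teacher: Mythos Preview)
Your proof is correct and is precisely the argument the paper has in mind: the paper does not spell out a proof but simply states the proposition as an immediate consequence of the formulas \eqref{EQ:Ric_bundles} (citing \cite{Be87} and \cite{GW09}), and your compactness-plus-quadratic-form estimate is the standard way to make that consequence explicit.
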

	
	We will now assume that $E$ is compact and denote by $g_r$ the metric obtained as in Proposition \ref{P:Ric_bundles} for some fixed metrics $\check{g}$ and $\hat{g}$ and principal connection $\theta$. The following result is a generalisation of the main construction in \cite[Section 3]{Re23} and will allow to \textquotedblleft transfer\textquotedblright\ a positive second fundamental form from the fibre to the base on the cylinder $I\times E$.
	
	\begin{proposition}\label{P:bdl_warping}
		Suppose that $\Ric_{\check{g}}\geq (q-1)\check{g}$ with $q\geq 2$ and $\Ric_{\hat{g}}>0$. Then, for any $r_0,r_1,\nu>0$ and $\lambda\in(0,1)$ there exist constants $\kappa_0=\kappa_0(p,q,A,\hat{g},\lambda)>0$ and $\kappa_1=\kappa_1(p,q,r_0,\nu,A,\hat{g},\lambda)>0$ such that if $r_0<\kappa_0$ and $r_1<\kappa_1$, then there is a metric of positive Ricci curvature on $[0,1]\times E$ such that the following holds:
		\begin{enumerate}
			\item The metric on $\{0\}\times E$ is isometric to $g_{r_0}$ and the second fundamental form satisfies
			\begin{align*}
				\II_{\{0\}\times E}(V,V)&\geq -\nu g_{r_0}(V,V),\\
				\II_{\{0\}\times E}(X,V)&=0,\\
				\II_{\{0\}\times E}(X,X)&=0,
			\end{align*}
			\item The metric on $\{1\}\times E$ is isometric to $R^2 g_{r_1}$ for some $R>0$ and the second fundamental form satisfies
			\begin{align*}
				\II_{\{1\}\times E}(V,V)&\geq 0,\\
				\II_{\{1\}\times E}(X,V)&=0,\\
				\II_{\{1\}\times E}(X,X)&\geq \lambda R g_{r_1}(X,X).
			\end{align*}
		\end{enumerate}
		Here $X$ resp.\ $V$ denote horizontal resp.\ vertical tangent vectors of $E$.
	\end{proposition}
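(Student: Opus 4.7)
The plan is to build the metric on $[0,1]\times E$ as a doubly warped Vilms submersion,
\[
g = dt^2 + \phi(t)^2\,\pi^*\check g + \psi(t)^2\,\mathrm{pr}_{\mathcal V}^*\hat g,
\]
where $\phi,\psi\colon [0,1]\to(0,\infty)$ are smooth functions to be chosen and the horizontal distribution is the one induced by the fixed principal connection $\theta$. Each slice $\{t\}\times E$ is then the rescaling by $\phi(t)^2$ of the Vilms metric $g^{\theta}_{\psi(t)/\phi(t)}$, so both ends automatically have the required submersion form once $\phi(0)=1$, $\psi(0)=r_0$, $\phi(1)=R$, $\psi(1)=Rr_1$. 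Since $g_t'$ is block diagonal with horizontal block $2\phi\phi'$ and vertical block $2\psi\psi'$, Lemma \ref{L:curv_form} identifies $\II=\tfrac12 g_t'$ on each slice and translates the prescribed second fundamental form conditions into the pointwise slope data $\phi'(0)=0$, $\psi'(0)\geq -\nu r_0$, $\phi'(1)\geq\lambda$, $\psi'(1)\geq 0$; the vanishing of $\II(X,V)$ is automatic from the block form.

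To verify positive Ricci, I would combine Lemma \ref{L:curv_form} in the $\partial_t$-direction with the submersion formulas \eqref{EQ:Ric_bundles} applied at each slice with fibre scale $r(t):=\psi(t)/\phi(t)$. This yields $\Ric(\partial_t,\partial_t) = -q\phi''/\phi-p\psi''/\psi$, a horizontal Ricci built from $\Ric_{\check g}/\phi^2$, warping terms in $\phi',\psi',\phi'',\psi''$, and an error of size $r(t)^2|A|^2$, a vertical Ricci built from $\Ric_{\hat g}/\psi^2$, warping terms, and a positive $r(t)^4|A|^2$ contribution, and mixed $\partial_t$-Ricci that vanishes vertically and is of size $r(t)^2|\check\delta A|$ horizontally. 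Each intrinsic Ricci term carries a uniform reserve supplied by $\Ric_{\check g}\geq (q-1)\check g$ and $\Ric_{\hat g}>0$.

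For the construction, I would split $[0,1]$ into an inner piece on which $\phi\equiv 1$ and $\psi$ shrinks strongly concavely from $r_0$ toward a small intermediate value, and an outer piece on which $\phi$ grows concavely from $1$ to $R$ with $\phi'(1)\geq\lambda$ (using a profile of the type provided by Lemma \ref{L:fH}, suitably rescaled) while $\psi$ continues concavely to $Rr_1$ with $\psi'(1)\geq 0$. The principal obstacle is the mismatch $\phi'(0)=0$ versus $\phi'(1)\geq\lambda$, which forces $\phi''>0$ on part of the interval and injects a negative contribution $-q\phi''/\phi$ into $\Ric(\partial_t,\partial_t)$ together with unfavourable warping terms in the horizontal Ricci. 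The remedy is to shrink $\psi$ aggressively: taking $r_1/r_0$ small, the concavity of $\psi$ can be made as large as desired, so $-p\psi''/\psi$ dominates $-q\phi''/\phi$ and the cross-term $-p\phi'\psi'/(\phi\psi)$ is brought under control. The threshold $\kappa_0=\kappa_0(p,q,A,\hat g,\lambda)$ is fixed so that the horizontal $r(t)^2|A|^2$ errors are uniformly absorbed by the reserve from $(q-1)\check g$ on the outer piece, and then $\kappa_1=\kappa_1(p,q,r_0,\nu,A,\hat g,\lambda)$ is chosen small enough that $\psi$ can simultaneously satisfy the endpoint slope conditions and supply the reserve needed to make $\Ric(\partial_t,\partial_t)$ and the horizontal Ricci positive.
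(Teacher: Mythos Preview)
Your ansatz and the identification of the boundary data are on the right track, and the Ricci formulas you invoke match those in \cite[Proposition~4.2]{Wr07} (note, though, that $\Ric(\partial_t,X)=\Ric(\partial_t,V)=0$ identically; the only nonvanishing mixed term is $\Ric(X,V)=-\tfrac{\psi}{\phi^3}((\check\delta A)X,V)$). However, the core mechanism you propose---shrinking $\psi$ concavely to make $r_1$ small and to supply positivity in $\Ric(\partial_t,\partial_t)$---is incompatible with the endpoint constraints, and the argument does not close.

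On your inner piece $\psi$ decreases with $\psi''<0$, so $\psi'$ exits that piece strictly negative; on your outer piece you ask for $\psi$ concave with $\psi'(1)\ge 0$, which forces $\psi'\ge 0$ there. Joining these smoothly requires $\psi''>0$ near the junction. The same happens for $\phi$: passing from $\phi'\equiv 0$ on the inner piece to $\phi'>0$ on the outer piece forces $\phi''>0$ near the junction. Thus exactly where you need $-p\psi''/\psi$ to rescue $\Ric(\partial_t,\partial_t)$ from the convexity of $\phi$, you instead have $\psi''>0$ as well, and $\Ric(\partial_t,\partial_t)=-q\phi''/\phi-p\psi''/\psi$ has both terms negative with nothing to compensate. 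More globally, ``concavity of $\psi$ as large as desired'' is simply unavailable once $\psi'(0)$ is pinned near $0$ and $\psi'(1)\ge 0$: the total variation of $\psi'$ is bounded, so $\int\psi''$ is bounded, and you cannot extract an arbitrarily large reserve from $-p\psi''/\psi$.

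The paper avoids this trap by never letting the vertical warping shrink. It takes explicit profiles from \cite[Section~3.3]{Re23}: a vertical function $h_0$ with $h_0'>0$, $h_0''<0$ throughout, and a horizontal function $f_C$ with $f_C'>0$, $f_C''=Ce^{-h_0^2}f_C>0$ throughout. The small ratio $r_1=h/f$ at the far end is produced not by shrinking $h$ but by letting $f_C\to\infty$ while $f_C h_0'\to 0$, and $C$ is chosen small so that the single bad term $-q f_C''/f_C=-qCe^{-h_0^2}$ is uniformly dominated by $-p h_0''/h_0=pe^{-h_0^2}$. The horizontal Ricci is handled because $f_C'\le\lambda<1$ on the whole interval and the $A$-tensor error is of size $h_0^2/f_C^2$, absorbed by taking $r_0$ small; a rescaling and an auxiliary parameter $a$ then realise the prescribed endpoint data and any sufficiently small $\nu$ and $r_1$. (Incidentally, your sign at $t=0$ is reversed: the outward normal is $-\partial_t$, so the constraint is $\psi'(0)\le\nu r_0$, not $\ge -\nu r_0$; this is precisely what allows the paper to take $h'(0)>0$.)
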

	\begin{proof}
		For $t_0>0$ and smooth functions $f,h\colon[0,t_0]\to(0,\infty)$ we define the metric $g_{f,h}$ on $[0,t_0]\times E$ by
		\[ g_{f,h}=dt^2+f(t)^2\pi^*\check{g}+h(t)^2\mathrm{pr}_{\mathcal{V}}^*\hat{g}. \]
		By \cite[Proposition 4.2]{Wr07}, the Ricci curvatures of this metric are given as follows:
		\begin{align*}
			\Ric(\partial_t,\partial_t)&= -q\frac{f''}{f}-p\frac{h''}{h},\\
			\Ric(\tfrac{X}{f},\tfrac{X}{f})&=-\frac{f''}{f}+\frac{\Ric_{\check{g}}(X,X)-(q-1){f'}^2}{f^2}-p\frac{f'h'}{fh}-2\frac{h^2}{f^4}(A_X,A_X),\\
			\Ric(\tfrac{V}{h},\tfrac{V}{h})&=-\frac{h''}{h}+\frac{\Ric_{\hat{g}}(V,V)-(p-1){h'}^2 }{h^2}-q\frac{f'h'}{fh}+\frac{h^2}{f^4}(AV,AV),\\
			\Ric(\tfrac{X}{f},\tfrac{V}{h})&=-\frac{h}{f^3}g((\check{\delta}A)X,V),\\
			\Ric(\partial_t,\tfrac{X}{f})&=\Ric(\partial_t,\tfrac{V}{h})=0.
		\end{align*}
		Here $X$ and $V$ are horizontal and vertical unit vectors with respect to $\check{g}$ and $\hat{g}$, respectively.
		
		Further, the second fundamental form of a slice $\{t\}\times E$ with respect to the normal vector $\partial_t$ is given by
		\[ \II_{\{t\}\times E}=f'f\pi^*\check{g}+h'h\,\mathrm{pr}_{\mathcal{V}}^*\hat{g}, \]
		see e.g.\ \cite[Lemma 2.7]{Bu19}.
		
		We will now construct functions $f$ and $h$ so that the Ricci curvatures are positive and such that the function $g_{f,h}$ satisfies the required boundary conditions. For that we will use the warping functions constructed in \cite[Section 3.3]{Re23}. For a parameter $C>0$ these are functions $h_0,f_C\colon[0,\infty)\to(0,\infty)$ with the following properties:
		\begin{enumerate}
			\item $f_C(0)=1$, $f_C'(0)=0$,
			\item $h_0'=e^{-\frac{1}{2}h_0^2}>0$ and $h_0''=-h_0e^{-h_0^2}<0$,
			\item $f_C'>0$ on $(0,\infty)$ with $\lim_{t\to\infty}f'_C(t)=\infty$ and $f_C''=Ce^{-h_0^2}f_C>0$,
			\item $\lim_{t\to\infty}f_C(t)h_0'(t)=0$,
			\item $\frac{f_C'}{f_Ch_0h_0'}\in[0,1]$.
		\end{enumerate}
		For $a>0$ we set $h_a=ah_0$ and $f_{a,C}=\frac{a}{c}f_C$, where $c=\frac{r_0}{h_0(0)}$. We will now show that for $a$ and $C$ sufficiently small there exists $t_0>0$ such that $g_{f,h}$ for $h=h_a$ and $f=f_{a,C}$ satisfies all required properties on $[0,t_0]\times E$ after rescaling.
		
		First note that, by (2) and (3), we obtain that $\Ric(\partial_t,\partial_t)$ is positive for $C$ sufficiently small. Next, we obtain the following expression for the Ricci curvatures in vertical direction:
		\begin{align*}
			\Ric(V,V)&=a^2h_0^2\left(e^{-h_0^2}+\frac{\Ric_{\hat{g}}(V,V)-a^2(p-1){h_0'}^2}{a^2h_0^2}-q\frac{f'_Ch_0'}{f_Ch_0}+\frac{c^4}{a^2}\frac{h_0^2}{f_C^4}(AV,AV)\right)\\
			&= a^2h_0^2e^{-h_0^2}+\Ric_{\hat{g}}(V,V)-(p-1)a^2h_0'^2-qa^2\frac{f_C'}{f_C h_0 h_0'}h_0^2e^{-h_0^2}+ c^4\frac{h_0^4}{f_C^4}(AV,AV).
		\end{align*}
		Since the last summand is non-negative, $\Ric_{\hat{g}}(V,V)$ is positive, and all other terms are uniformly bounded, it follows that $\Ric(V,V)$ is uniformly positive as $a\to 0$.
		
		Further, for the Ricci curvatures in horizontal direction we have the following:
		\begin{align*}
			\Ric(X,X)&=\frac{a^2}{c^2}f_C^2\left(-Ce^{-h_0^2}+\frac{\Ric_{\check{g}}(X,X)-(q-1)\frac{a^2}{c^2}{f_C'}^2}{\frac{a^2}{c^2}f_C^2}-(p-1)\frac{f_C'h_0'}{f_Ch_0}-2\frac{c^4}{a^2}\frac{h_0^2}{f_C^4}(A_X,A_X)\right)\\
			&= -\frac{a^2}{c^2}Ce^{-h_0^2}f_C^2+\left( \Ric_{\check{g}}(X,X)-(q-1)\frac{a^2}{c^2}{f_C'}^2 \right)-(p-1)\frac{a^2}{c^2}\frac{f_C'}{f_Ch_0h_0'}f_C^2{h_0'}^2-2c^2\frac{h_0^2}{f_C^2}(A_X,A_X).
		\end{align*}
		For $f_{a,C}'\in[0,\lambda]$, we have
		\[ \Ric_{\check{g}}(X,X)-(q-1)\frac{a^2}{c^2}{f_C'}^2\geq (q-1)(1-\lambda)>0.  \]
		All other terms are uniformly bounded, so that $\Ric(X,X)$ is uniformly positive if we choose $c$ (and therefore $r_0$) and subsequently $a$ sufficiently small.
		
		Finally, for the mixed Ricci curvatures we obtain
		\[ \Ric(X,V)=-c^2\frac{h_0^2}{f_C^2}g((\check{\delta}A)X,V). \]
		Since $\frac{h_0}{f_C}$ is bounded, we have $\Ric(X,V)\to 0$ as $c\to 0$. Hence, if we define $t_0$ such that $f_{a,C}'(t_0)=\lambda$, we have positive Ricci curvature on $[0,t_0]$ for $a$ and $c$ sufficiently small, i.e.\ for $a$ and $r_0$ sufficiently small, where $r_0$ depends on $p,q,\lambda,\hat{g}$ and $A$.
		
		We now rescale the metric $g_{f,h}$ by $\frac{c}{a}$, that is, we replace $f$, $h$ and $t_0$ by $\frac{c}{a}f(\frac{a}{c}\cdot)$, $\frac{c}{a}h(\frac{a}{c}\cdot)$ and $\frac{c}{a}t_0$, respectively, so that
		\[ f(t)=f_C(\tfrac{a}{c}t),\quad h(t)=\frac{r_0}{h_0(0)}h_0(\tfrac{a}{c}t). \]
		Hence, the induced metric is given by $g_{r_0}$ on $\{0\}\times E$ and by $R^2g_{r_1}$ on $\{1\}\times E$, where $R=f_C(t_0)$ and
		\[ r_1=\frac{r_0}{h_0(0)}\frac{h_0(t_0)}{f_C(t_0)}. \]
		Hence, once $r_0$ is fixed, we can realize any sufficiently small value of $r_1$ by choosing $a$ smaller, since $t_0\to\infty$ as $a\to 0$ by (3) and $\frac{h_0(t)}{f_C(t)}\to0$ as $t\to\infty$.
		
		For the second fundamental form we obtain
		\begin{align*}
			\II_{\{0\}\times E}&=a r_0h_0'(0)\mathrm{pr}_{\mathcal{V}}^*\hat{g}=a\frac{h_0'(0)}{r_0}g_{r_0}|_{\mathcal{V}},\\
			\II_{\{\frac{c}{a}t_0\}\times E}&=\lambda f_C(t_0)\pi^*\check{g}+a\frac{r_0}{h_0(0)}h_0'(t_0)h_0(t_0)\mathrm{pr}_{\mathcal{V}}^*\hat{g}\geq \lambda R\pi^*\check{g}=\lambda R g_{r_1}|_{\mathcal{H}}.
		\end{align*}
		Thus, we can realize any sufficiently small value for the principal curvatures at $t=0$ (note that we need to reverse the sign since the outward normal is given by $-\partial_t$) by choosing $a$ sufficiently small. Hence, we obtain all required boundary conditions. Note that the value of $a$, and therefore of $r_1$ depends on $p,q,\lambda,\hat{g},A,\nu$ and $r_0$.
	\end{proof}
	
	In the case of a one-dimensional fibre the condition $\Ric_{\hat{g}}>0$ in Proposition \ref{P:bdl_warping} is not satisfied. By the construction of Sha--Yang \cite{SY91} we have the following result in this case:
	\begin{proposition}\label{P:bdl_warping_S1}
		Suppose that $\Ric_{\check{g}}\geq (q-1)\check{g}$ with $q\geq 2$ and suppose that $\pi$ is the trivial $S^1$-bundle over $B$, i.e.\ $E=B\times S^1$. Then, for any $\lambda\in(0,1)$ there exists a metric of positive Ricci curvature on $B\times D^2$ such that the induced metric on the boundary is of the form $\check{g}+R^2ds_1^2$ for some $R>0$ and the second fundamental form on the boundary satisfies
		\begin{align*}
			\II_{B\times S^1}(\partial_t,\partial_t)&\geq 0,\\
			\II_{B\times S^1}(X,\partial_t)&=0,\\
			\II_{B\times S^1}(X,X)&\geq \lambda \check{g}(X,X).
		\end{align*}
		Here $\partial_t$ is a unit tangent vector field on $S^1$ and $X$ is tangent to $B$.
	\end{proposition}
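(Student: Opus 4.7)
The plan is to construct the metric as a doubly warped product on $B \times D^2$, viewing $D^2$ in polar form as the cone over $S^1$. On $[0, r_0] \times B \times S^1$, consider the ansatz
\[
g = dr^2 + f(r)^2 \check g + h(r)^2 d\theta^2,
\]
and collapse the $S^1$ factor at $r = 0$ to obtain a smooth metric on $B \times D^2$; this requires $h$ to be odd at $0$ with $h(0) = 0$, $h'(0) = 1$, and $f$ to be even at $0$ with $f'(0) = 0$. Setting $p = 1$ and $A \equiv 0$ in the Ricci formulas from the proof of Proposition~\ref{P:bdl_warping}, and using $\Ric_{\check g}(X, X) \geq q - 1$ for a $\check g$-unit horizontal vector $X$, gives
\begin{align*}
\Ric(\partial_r, \partial_r) &= -q\frac{f''}{f} - \frac{h''}{h},\\
\Ric\!\left(\frac{X}{f}, \frac{X}{f}\right) &\geq -\frac{f''}{f} + (q-1)\frac{1 - (f')^2}{f^2} - \frac{f'h'}{fh},\\
\Ric\!\left(\frac{V}{h}, \frac{V}{h}\right) &= -\frac{h''}{h} - q\frac{f'h'}{fh}.
\end{align*}
The second fundamental form of a slice $\{r\} \times B \times S^1$ with outward normal $\partial_r$ is $\II = f'f\,\pi^*\check g + h'h\,\mathrm{pr}_\mathcal{V}^* d\theta^2$, and in particular $\II(X, \partial_t) = 0$ automatically from the product structure.

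The boundary conditions at $r = r_0$ reduce to $f(r_0) = 1$ (so that the induced metric takes the desired form $\check g + R^2 d\theta^2$ with $R = h(r_0)$), $f'(r_0) \geq \lambda$, and $h'(r_0) \geq 0$. Following the construction of Sha--Yang~\cite{SY91}, I would take $h$ concave with $h(0) = 0$, $h'(0) = 1$, $h'(r_0) \geq 0$---for instance $h(r) = \sin(\omega r)/\omega$ with $\omega r_0 \leq \pi/2$---so that $-h''/h > 0$ provides the dominant positive term in $\Ric(\partial_r, \partial_r)$ and $\Ric(V, V)$. The function $f$ is chosen even at $0$, nearly constant near $0$, and slowly increasing so that $f(r_0) = 1$ with $f'(r_0) = \lambda$; the crucial positive term in the horizontal Ricci is $(q-1)(1 - (f')^2)/f^2$, which for $f' \leq \lambda$ is bounded below by $(q-1)(1 - \lambda^2) > 0$ and absorbs the negative contributions $-f''/f$ and $-f'h'/(fh)$ once the parameters are suitably calibrated.

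The main obstacle will be the simultaneous balance of $r_0$, the concavity of $h$, and the profile of $f$: making $-h''/h$ large (needed for positive vertical Ricci) forces a short interval $[0, r_0]$ on which $f$ must nonetheless grow its derivative from $0$ to $\lambda$, producing a potentially large $f''$ that enters negatively in $\Ric(\partial_r, \partial_r)$ and $\Ric(X, X)$. Moreover, $h'/h$ diverges as $r \to 0$, so the cross-term $f'h'/(fh)$ must be controlled using $f'(r) = O(r)$ (from $f'(0) = 0$) against $h(r) = r + O(r^3)$. The strict inequality $(q-1)(1 - \lambda^2) > 0$, together with the flexibility in choosing $f$ and $h$, ensures that this balance can be achieved for every $\lambda \in (0, 1)$, which is precisely the content of the Sha--Yang construction referenced in the proposition.
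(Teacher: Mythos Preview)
Your approach is correct and essentially identical to the paper's: both use the doubly warped product ansatz $dt^2+f(t)^2\check g+h(t)^2 ds_1^2$ on $[0,t_0]\times B\times S^1$ with the $S^1$ collapsed at $t=0$, compute Ricci curvatures via the formulas from Proposition~\ref{P:bdl_warping} specialised to $p=1$, $A\equiv 0$, and then invoke an external construction for warping functions with $f'(t_0)=\lambda$, $h'(t_0)\geq 0$. The only difference is bibliographic: you appeal to the Sha--Yang construction \cite{SY91}, while the paper cites the warping functions of \cite[Lemma~3.3]{Re24} and then rescales by $f(t_0)^{-1}$ rather than normalising $f(t_0)=1$ from the outset.
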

	\begin{proof}
		Similarly as in the proof of Proposition \ref{P:bdl_warping} we consider the doubly warped product metric
		\[ g_{f,h}=dt^2+f(t)^2\check{g}+h(t)^2ds_1^2, \]
		$t\in[0,t_0]$. Here we collapse the sphere $S^1$ at $t=0$ so that we obtain a smooth metric on $B\times D^2$, that is, we require that the functions $f$ and $h$ satisfy the boundary conditions \ref{EQ:dw_boundary_0}. We now choose these functions as the warping functions defined in \cite[Lemma 3.3]{Re24}, in particular we have $f'(t_0)=\lambda$ and $h'(t_0)=0$ and, by the formulas for the Ricci curvatures given in the proof of Proposition \ref{P:bdl_warping}, we also have positive Ricci curvature. Hence, after rescaling by $f(t_0)^{-1}$, we obtain a metric that satisfies all required conditions.
	\end{proof}
	
	Finally, we will also consider the second fundamental form on the boundary of the total space of a bundle $E\to B$, where $B$ has non-empty boundary.
	\begin{lemma}\label{L:II_bdl}
		If $B$ has non-empty boundary and $F$ has empty boundary, then the second fundamental form on the boundary $\partial E=\pi^{-1}(\partial B)$ for the metric \eqref{EQ:submersion_metric} is given by
		\[ \II_{\partial E}=\pi^*\II_{\partial B}. \]
	\end{lemma}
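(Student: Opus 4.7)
The plan is to work with the natural splitting $T\partial E = \mathcal{H}_{\partial E}\oplus\mathcal{V}$, where $\mathcal{H}_{\partial E}$ is the horizontal lift of $T\partial B$, and to compute $\II_{\partial E}$ block by block with respect to this splitting. Let $\nu$ denote the outward unit normal of $\partial B$ in $(B,\check{g})$, extended to a tubular collar via the inward geodesic flow (so that $\nu$ remains of unit length and orthogonal to each level set), and let $\tilde{\nu}$ be its horizontal lift on $E$ along the principal connection $\theta$. Because $\pi\colon(E,g)\to(B,\check{g})$ is a Riemannian submersion, $\tilde{\nu}$ has unit norm; because $\tilde{\nu}$ is horizontal, it is orthogonal to $\mathcal{V}$; and because $d\pi$ restricts to an isometry on $\mathcal{H}$ with $\nu\perp T\partial B$, it is orthogonal to every horizontal lift of a vector in $T\partial B$. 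Therefore $\tilde{\nu}$ is the outward unit normal of $\partial E$ in $(E,g)$, and $\II_{\partial E}(\cdot,\cdot)=g(\nabla^g_{\cdot}\tilde{\nu},\cdot)$.

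For horizontal lifts $X$, $Y$ of vector fields $X',Y'$ on $\partial B$, the O'Neill identities for a Riemannian submersion with totally geodesic fibres give $(\nabla^g_X\tilde{\nu})^{\mathcal{H}}=\widetilde{\check{\nabla}_{X'}\nu}$ (the extra vertical piece lies in $\mathcal{V}$). Contracting with $Y\in\mathcal{H}$ and using the submersion isometry on horizontal vectors yields
\[
\II_{\partial E}(X,Y)=g(\nabla^g_X\tilde{\nu},Y)=\check{g}(\check{\nabla}_{X'}\nu,Y')=\II_{\partial B}(X',Y')=(\pi^{*}\II_{\partial B})(X,Y).
\]
For vertical vector fields $U$, $W$, the Koszul formula reduces $g(\nabla_U\tilde{\nu},W)$ to $-g(\tilde{\nu},\nabla_U W)$, since $g(\tilde{\nu},U)$ and $g(\tilde{\nu},W)$ vanish identically. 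Totally-geodesicity of the fibres forces $\nabla_U W$ to be vertical, and hence orthogonal to $\tilde{\nu}$; therefore $\II_{\partial E}(U,W)=0=(\pi^{*}\II_{\partial B})(U,W)$.

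The remaining mixed case $\II_{\partial E}(X,U)$ is handled by another Koszul computation, using the three identities $g(X,\tilde{\nu})\equiv g(X,U)\equiv g(\tilde{\nu},U)\equiv 0$ on the collar (the first after extending $\nu$ and $X'$ so that $X'\perp\nu$ on the collar, the others by the horizontal-vertical decomposition of the Vilms metric). This reduces $\II_{\partial E}(X,U)$ to a linear combination of bracket terms between horizontal basic lifts and vertical fields, which in turn are controlled by the fact that the vertical distribution is preserved by the connection $\theta$ together with the total-geodesicity of the fibres. Assembling the three cases produces $\II_{\partial E}=\pi^{*}\II_{\partial B}$. The main technical point is the bracket-cancellation in the mixed case, which is precisely where the explicit form \eqref{EQ:submersion_metric} of the Vilms metric enters crucially.
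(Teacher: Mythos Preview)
Your approach differs from the paper's: rather than computing block by block via O'Neill's identities, the paper passes to a boundary collar $\partial B\times[0,\varepsilon)$ on which $\check{g}=dt^2+\check{g}_t$, writes $g=dt^2+\pi^*\check{g}_t+\mathrm{pr}^*_{\mathcal{V}}\hat{g}$ on the corresponding collar of $E$, and reads off $\II=\tfrac12 g_t'=\pi^*(\tfrac12\check{g}_t')=\pi^*\II_{\partial B}$ from Lemma~\ref{L:curv_form}. Your horizontal--horizontal and vertical--vertical blocks are handled correctly and agree with what that argument yields.

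The mixed block is where your proof has a genuine gap, and the difficulty is not merely cosmetic. Carrying out the Koszul computation you sketch gives
\[
\II_{\partial E}(X,U)=g\bigl(A_X\tilde{\nu},U\bigr)=\tfrac12\,g\bigl(\mathcal{V}[X,\tilde{\nu}],U\bigr),
\]
the $A$-tensor of the submersion evaluated on the normal. This does \emph{not} vanish in general: it is controlled by the curvature of $\theta$ in the direction of $\nu$, and neither total-geodesicity of the fibres (which kills the $T$-tensor, not the $A$-tensor) nor the explicit form \eqref{EQ:submersion_metric} forces $\mathcal{V}[X,\tilde{\nu}]=0$. The ``bracket-cancellation'' you invoke would amount to $\Omega(\nu,\cdot)=0$ along $\partial B$, which is an extra hypothesis. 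In fact the paper's short argument hides the same step---the implicit claim that $\mathrm{pr}^*_{\mathcal{V}}\hat{g}$ is $t$-independent under the collar identification via horizontal transport is equivalent to $A_{\tilde{\nu}}\equiv 0$ on $\partial E$. In every application in the paper the connection is either pulled back along a projection (so its curvature annihilates $\partial_t$) or chosen flat over the relevant region, so the extra term vanishes there; but as a proof of the lemma exactly as stated, your mixed case needs either that hypothesis or an argument you have not given.
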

	\begin{proof}
		By the Gauss lemma, we can identify a neighbourhood of the boundary $\partial B$ with $\partial B\times [0,\varepsilon)$ and the metric $\check{g}$ takes the form $\check{g}=dt^2+\check{g}_t$ in this neighbourhood. Hence, a neighbourhood of $\partial B$ can be identified with $\pi^{-1}(\partial B)\times[0,\varepsilon)$ and the metric $g$ takes the form
		\[ g=dt^2+\pi^*\check{g}_t+\mathrm{pr}^*_{\mathcal{V}}\hat{g} \]
		in this neighbourhood. The claim now follows from Lemma \ref{L:curv_form}.
		
	\end{proof}

	\subsection{Deforming and gluing metrics of positive Ricci curvature}\label{SS:glue_deform}
	
	In this subsection we collect several results on gluing and deforming Riemannian metrics of positive Ricci curvature that will be helpful throughout Sections \ref{S:build_block} and \ref{S:glued_spaces}.
	
	First, we state Perelman's gluing result \cite{Pe97}.
	
	\begin{proposition}[{\cite{Pe97}, see also \cite[Theorem 2]{BWW19}}]\label{P:gluing}
		Let $(M_1^n,g_1)$ and $(M_2^n,g_2)$ be Riemannian manifolds of positive Ricci curvature and assume that there exists an isometry $\phi\colon \partial M_1\to\partial M_2$ such that $\II_{\partial M_1}+\phi^*\II_{\partial M_2}$ is positive semi-definite. Then the glued manifold $M_1\cup_\phi M_2$ admits a Riemannian metric of positive Ricci curvature that coincides with $g_i$ on $M_i$ outside an arbitrarily small neighbourhood of the gluing area.
	\end{proposition}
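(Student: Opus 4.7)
By the Gauss lemma, write each $g_i$ in collar form near the boundary as $g_i = dt_i^2 + h_i(t_i)$ for $t_i \in [0,\varepsilon)$, where $h_i(0)$ is the boundary metric and $h_i'(0)$ is twice the second fundamental form of $\partial M_i$ with respect to the outward normal $-\partial_{t_i}$. Identify the two collars via $\phi$ so that a neighbourhood of the gluing locus in $M_1 \cup_\phi M_2$ takes the form $(-\varepsilon,\varepsilon) \times \partial M_1$, with the $C^0$-glued metric $dt^2 + g(t)$ given by $g(t) = h_1(-t)$ for $t \leq 0$ and $g(t) = h_2(t)$ for $t \geq 0$. The hypothesis $\II_{\partial M_1} + \phi^*\II_{\partial M_2} \geq 0$ translates to the non-negative-jump condition $g'(0^+) - g'(0^-) \geq 0$ as symmetric tensors on $\partial M_1$. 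The task is to replace $g(t)$ by a smooth $\tilde g(t)$ that agrees with $g$ outside an arbitrarily small neighbourhood of $t = 0$ and for which $dt^2 + \tilde g(t)$ is Ricci-positive.

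The plan, following Perelman, is a two-step reduction. First, I would bend each $g_i$ in a short collar so that the new boundary is \emph{strictly} convex, i.e.\ $\II_{\partial M_i} > 0$, while preserving positive Ricci curvature. Concretely, attach a thin warped collar whose warping function bends inward so as to add a uniform positive tensor $\eta \cdot h_i(0)$ to $\tfrac{1}{2}h_i'(0)$; the Ricci curvature computation via Lemma \ref{L:curv_form} stays positive provided the collar is thin enough and $\eta$ small. The hypothesis $\II_1 + \phi^*\II_2 \geq 0$ allows distributing the amount $\eta$ between the two sides so that both new boundaries are strictly convex and the matching of boundary metrics is preserved by rescaling. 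Second, between the resulting strictly convex pieces, I would insert a short cylinder $[-\delta,\delta] \times \partial M$ carrying a smooth metric $dt^2 + \tilde g(t)$ that matches the prescribed $\tilde g, \tilde g'$ at $t = \pm\delta$ and is totally geodesic at $t = 0$ (so $\tilde g'(0) = 0$); Perelman's gluing reduces to checking that such $\tilde g$ can be chosen Ricci-positive.

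The main obstacle is controlling the $-\tfrac{1}{2}\tilde g''$ terms appearing in the Ricci components of Lemma \ref{L:curv_form}: any smoothing of $g'$ across $t = 0$ forces $\tilde g''$ to be large and contributes negatively to Ricci. The strict convexity from the first step furnishes the needed slack, since $g'(0^\pm)$ are then positive definite and the transition to $\tilde g'(0) = 0$ can be spread over a $\delta$-interval; one balances the negative $-\tfrac{1}{2}\tilde g''$ against the positive $\tfrac{1}{4}\|\tilde g'\|^2$ term, the slice Ricci curvature (which is uniformly bounded below on the compact boundary), and the $-\tfrac{1}{2}\mathrm{tr}_{\tilde g}\tilde g''$ contribution in the radial direction. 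A scaling argument (replacing $\tilde g$ by $R^2 \tilde g$ concentrated in a suitably small interval) makes this balance work, completing the gluing.
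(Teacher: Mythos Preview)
The paper does not supply its own proof of this proposition; it is simply cited from \cite{Pe97} and \cite{BWW19}. Your outline, however, contains a sign error that sends the whole strategy in the wrong direction. You write that ``$h_i'(0)$ is twice the second fundamental form of $\partial M_i$ with respect to the outward normal $-\partial_{t_i}$'', but by Lemma~\ref{L:curv_form} the formula $\II=\tfrac{1}{2}g_t'$ is with respect to $+\partial_t$; since $-\partial_{t_i}$ is outward in your collar, the correct relation is $\II_i=-\tfrac{1}{2}h_i'(0)$. Hence
\[
g'(0^+)-g'(0^-)=h_2'(0)+h_1'(0)=-2\bigl(\II_{\partial M_1}+\phi^*\II_{\partial M_2}\bigr)\leq 0,
\]
so the kink is \emph{concave}, not convex. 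Any mollification $\tilde g$ of $g$ across $t=0$ therefore has $\tilde g''$ large and \emph{negative} (as a symmetric tensor) on a short interval, and the dominant contributions $-\tfrac{1}{2}\mathrm{tr}_{\tilde g}\tilde g''$ and $-\tfrac{1}{2}\tilde g''(u,u)$ in Lemma~\ref{L:curv_form} are \emph{positive}. Perelman's point is exactly that the hypothesis makes the smoothing help the Ricci curvature; one only has to check that the remaining bounded terms (mixed Ricci, slice Ricci, first-derivative terms) do not spoil positivity when the mollification interval is small.

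Your two-step plan tries to overcome an obstacle that is not there, and its first step is in any case impossible under the stated hypothesis: if, say, $\II_{\partial M_1}=-C\,h$ and $\II_{\partial M_2}=C\,h$ for large $C$, then the sum vanishes but no small collar-bending by $\eta$ will make $\II_{\partial M_1}$ positive. The actual argument never requires either boundary to be individually convex.
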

	
	In the proof of Theorem \ref{T:NE_core_mtrc} below we will need a refined version of Proposition \ref{P:gluing} to manifolds with corners. For that, let $M^n$ be a manifold with corners of codimension 2, that is, $M$ is locally diffeomorphic to $(\R_{\geq 0})^2\times \R^{n-2}$. This gives a partition of $\partial M$ into \emph{faces}, i.e.\ the closures of connected components of the set corresponding along charts to boundary points in $(\R_{\geq 0})^2\times \R^{n-2}\setminus\{0\}$. The connected components of the boundaries of the faces are the \emph{corners}, which correspond to points in $\{0\}\times \R^{n-2}$ along charts. In particular, every corner is obtained as the connected component of the intersection between two faces. We will assume that these two faces are always distinct.
	
	Given a corner $Z\subseteq M$, we can smooth the corner, which eliminates $Z$ and replaces the faces $Y_1$ and $Y_2$ that intersect at $Z$ by the glued face $Y_1\cup_Z Y_2$. Locally, the smoothing corresponds to identifying $(\R_{\geq 0})^2\times \R^{n-2}$ with $\R_{\geq0}\times \R^{n-1}$.
	
	If $M$ is equipped with a Riemannian metric, we define the angle $\theta$ along the corners as the function assigning to a point of a corner the interior angle between the two faces intersecting at this point.
	\begin{proposition}[{\cite[Theorem D and Corollary D]{Bu20}}]\label{P:gluing_corner}
		Let $(M_1^n,g_1)$ and $(M_2^n,g_2)$ be Riemannian manifolds of positive Ricci curvature with corners of codimension $2$ such that there exists an isometry $\phi\colon Y_1\to Y_2$ between two faces $Y_1\subseteq M_2$ and $Y_2\subseteq M_2$. Suppose that
		\begin{enumerate}
			\item The sum of second fundamental forms $\II_1+\phi^*\II_2$ is positive along $Y_1$,
			\item The sum of angles $\theta_1+\theta_2\circ\phi$ along the corners $\partial Y_1$ is less than $\pi$,
			\item The second fundamental form is positive near $\partial Y_i$ on each face intersecting $Y_i$.
		\end{enumerate}
		Then there exists a Riemannian metric $g$ of positive Ricci curvature on the manifold with corners obtained from $M_1\cup_\phi M_2$ by smoothing the corners $\partial Y_1\cong \partial Y_2$ that coincides with $g_1$ resp.\ $g_2$ outside an arbitrarily small neighbourhood of the gluing area and such that the second fundamental form on all faces intersecting $Y_i$ is positive in this neighbourhood.
		
		Moreover, if $Z\subseteq \partial Y_1$ is a corner with corresponding faces $\tilde{Y}_i\subseteq M_i$ such that $Z\subseteq Y_1\cap\tilde{Y}_1$ and $\phi(Z)\subseteq Y_2\cap\tilde{Y}_2$, such that the metrics on $\tilde{Y}_i$ are given by $ds^2+h_i(s)$ with $h_i''<0$, where $s$ is the distance from $Z$, resp.\ $\phi(Z)$, then the induced metric of $g$ on $\tilde{Y}_1\cup_{\phi|_Z}\tilde{Y}_2$ is of the form $ds^2+h(s)$ with $h''<0$.
	\end{proposition}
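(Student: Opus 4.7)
The statement is attributed to Burdick's Theorem~D and Corollary~D in \cite{Bu20}, so the plan is to invoke those results directly for the first part of the proposition and then verify that the additional ``moreover'' clause follows from the explicit nature of Burdick's corner-smoothing construction.

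For the main statement, the strategy is to recall that Burdick's argument replaces the metric in an arbitrarily small neighbourhood of the corner $\partial Y_1\cong \partial Y_2$ by a model built from a cylindrical warped product, with warping profiles chosen to interpolate between the two boundary pieces. The angle condition~(ii) provides enough room to connect the two faces meeting at $\partial Y_i$ while keeping the resulting second fundamental form non-negative on those faces, the positivity of $\II_1+\phi^*\II_2$ in~(i) supplies the standard Perelman-type gluing condition along $Y_1\cong Y_2$ that yields positive Ricci curvature, and condition~(iii) ensures that the model metric can be matched to the ambient metrics without sacrificing convexity along the neighbouring faces.

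For the ``moreover'' clause, I would trace the induced effect of the smoothing on the glued face $\tilde{Y}_1\cup_{\phi|_Z}\tilde{Y}_2$. Because the starting metrics are of warped form $ds^2+h_i(s)$ in a neighbourhood of $Z$ resp.\ $\phi(Z)$, and Burdick's model is a warped product in the direction normal to $Y_i$, the induced metric on the face retains the warped form $ds^2+h(s)$. Concavity of $h$ on each side of $s=0$ is inherited from $h_i''<0$, while the angle condition~(ii) at $Z\cong\phi(Z)$ translates into an inequality between the one-sided derivatives of the piecewise function $\tilde h$ obtained by reflecting and gluing $h_1$ and $h_2$, which guarantees a concave (rather than convex) kink at $s=0$. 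A suitable mollification, or equivalently Burdick's own smoothing profile applied to this one-dimensional situation, then produces a smooth $h$ with $h''<0$.

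The main obstacle is that implementing this requires a careful inspection of the explicit smoothing formulas in \cite{Bu20}: one must verify both that the warped-product structure on the face is genuinely preserved by Burdick's corner modification, and that the smoothing of the kink in $\tilde h$ can be chosen to keep the second derivative strictly negative all the way through the gluing locus. Both points should follow from Burdick's construction, but have to be extracted from the proof rather than taken off the shelf from the statement of Theorem~D.
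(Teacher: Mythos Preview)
Your proposal is correct and matches the paper's treatment: the paper does not prove this proposition at all but simply cites \cite[Theorem D and Corollary D]{Bu20}, adding only the remark that the warped-product preservation in the ``moreover'' clause is seen in \cite[Appendix 2.3]{Bu20}. Your plan to invoke Burdick's results directly and trace the warped-product structure through his explicit smoothing is exactly the right approach, and your sketch of why the concave kink survives the smoothing is more detailed than anything the paper itself provides.
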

	In fact, as seen in \cite[Appendix 2.3]{Bu20}, if the induced metrics on $\tilde{Y}_i$ are warped product metrics, then the induced metric of $g$ on $\tilde{Y}_1\cup_{\phi|_Z}\tilde{Y}_2$ is again a warped product metric.
	
	To satisfy the assumption of having isometric boundaries in Proposition \ref{P:gluing}, we have the following result to modify the metric on the boundary.
	
	\begin{proposition}[{\cite[Theorem 7]{Bu20}}]\label{P:deform}
		Let $g_0$ and $g_1$ be Riemannian metrics on a closed manifold $M^n$ that lie in the same path component in the space of Ricci-positive metrics on $M$. Then, for all $\nu\in(0,1)$ there exist $R>0$ and a Riemannian metric $g$ of positive Ricci curvature on $M\times[0,1]$, such that
		\begin{enumerate}
			\item The induced metric on $\{0\}\times M$ is $g_0$ and the principal curvature are all at least $-\nu$,
			\item The induced metric on $\{1\}\times M$ is $R^2g_1$ for some $R>0$ and the principal curvature are all positive.
		\end{enumerate}
	\end{proposition}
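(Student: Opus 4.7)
The plan is to build $g$ as a warped cylinder of the form $\bar{g} = dt^2 + \phi(t)^2 g_{s(t)}$ on $M \times [0, T]$, then reparametrize $t$ to $[0,1]$. Here $(g_s)_{s \in [0,1]}$ is a smooth path of Ricci-positive metrics from $g_0$ to $g_1$, which exists by hypothesis; $\phi \colon [0, T] \to (0, \infty)$ is a strictly concave warping function; and $s(t)$ is a slow reparametrization. The concavity of $\phi$ supplies the positive contribution $-n\phi''/\phi$ to $\Ric(\partial_t, \partial_t)$; its boundary slopes $\phi'(0)$ and $\phi'(T)$ dictate the principal curvatures at the two ends; and slowing $s$ makes the $t$-dependence of the fibre metric contribute negligibly to the Ricci tensor.

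First I would extract uniform constants: by compactness of $[0,1]$ there is $\kappa > 0$ with $\Ric_{g_s} \geq \kappa g_s$ for all $s$, together with uniform upper bounds on $\partial_s g_s$, $\partial_s^2 g_s$ and their first covariant derivatives along $M$. Next, fix a small $\varepsilon \in (0, \nu]$ satisfying $(n-1)\varepsilon^2 < \kappa$, take $\phi$ smooth and strictly concave on $[0, T]$ with $\phi(0) = 1$, $\phi'(0) = \varepsilon$, and $\phi'(T) > 0$ (for instance a tiny concave perturbation of $1 + \varepsilon t$), and take $s \colon [0, T] \to [0, 1]$ smooth and nondecreasing with $s \equiv 0$ near $0$, $s \equiv 1$ near $T$, and $\|s'\|_\infty, \|s''\|_\infty$ arbitrarily small (achievable by taking $T$ large).

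Setting $g_t := \phi(t)^2 g_{s(t)}$ and applying Lemma \ref{L:curv_form}, the slice second fundamental form decomposes as $\tfrac{1}{2} g_t' = \phi\phi' g_{s(t)} + \tfrac{1}{2}\phi^2 s'(t)\, \partial_s g_{s(t)}$. Since $s'$ vanishes near the endpoints, the induced boundary metrics are $g_0$ at $t = 0$ and $R^2 g_1$ at $t = T$ with $R := \phi(T)$, and the outward principal curvatures are $-\varepsilon \geq -\nu$ and $\phi'(T)/\phi(T) > 0$ respectively. The standard warped-product expansion gives $\Ric(\partial_t, \partial_t) = -n\phi''/\phi$ and $\Ric(V, V) \geq \kappa/\phi^2 - \phi''/\phi - (n-1)(\phi')^2/\phi^2$ for unit fibre vectors $V$, plus correction terms controlled by $\|s'\|_\infty$, $\|s''\|_\infty$ and the chosen bounds on $\partial_s g_s$; the main terms are positive by the choice of $\phi$ and $\varepsilon$, and the corrections are absorbed once $\|s'\|_\infty, \|s''\|_\infty$ are sufficiently small. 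The mixed term $\Ric(\partial_t, V)$ carries a factor of $s'$ or $s''$ and is handled similarly. A final reparametrization $t \mapsto t/T$ brings the cylinder to $M \times [0, 1]$.

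The hard part is coordinating the parameters simultaneously: global concavity of $\phi$ together with $\phi'(T) > 0$ forces $\phi'(0) \geq \phi'(T) > 0$ and caps the growth $\phi(T) - 1 \leq T \phi'(0)$; smallness of $\varepsilon$ is needed for the tangential inequality $(n-1)\varepsilon^2 < \kappa$; and smallness of $\|s'\|_\infty, \|s''\|_\infty$ requires $T$ large. These constraints are compatible because the main positive terms $-n\phi''/\phi$ and $\kappa/\phi^2 - (n-1)(\phi'/\phi)^2$ can be kept bounded below independently of $T$ by an appropriate choice of $\phi$ of the form $\phi(t) = \psi(\varepsilon t)$ for a fixed strictly concave model $\psi$, while all correction terms can be driven to zero by taking $T$ sufficiently large.
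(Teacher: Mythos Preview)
The paper does not give its own proof of this proposition; it is quoted as \cite[Theorem~7]{Bu20} and used as a black box. So there is no in-paper argument to compare against, and the question is simply whether your proposal is correct.

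There is a genuine gap in your estimate for $\Ric(\partial_t,\partial_t)$. The claim that ``the main positive term $-n\phi''/\phi$ can be kept bounded below independently of $T$'' is false under your hypotheses. With $\phi(0)=1$, $\phi'(0)=\varepsilon$, $\phi''<0$ and $\phi'>0$ on $[0,T]$ one has $\phi\geq 1$ and $\int_0^T(-\phi'')=\varepsilon-\phi'(T)<\varepsilon$, so
\[
\inf_{[0,T]}\Bigl(-\tfrac{\phi''}{\phi}\Bigr)\ \leq\ \inf_{[0,T]}(-\phi'')\ \leq\ \frac{\varepsilon}{T}.
\]
Equivalently, for $\phi(t)=\psi(\varepsilon t)$ with $\psi$ defined on $[0,T_0]$, one always has $T_0\cdot\inf(-\psi''/\psi)\leq 1$, no matter how $\psi$ is chosen. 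On the other hand, expanding $\Ric(\partial_t,\partial_t)$ via Lemma~\ref{L:curv_form} gives a leading correction
\[
-\,\frac{\phi'}{\phi}\,s'(t)\,\mathrm{tr}_{g_{s(t)}}\dot g_{s(t)},
\]
and since $\int_0^T s'\,dt=1$, making $\lVert s'\rVert_\infty$ small forces $T$ large, which forces the main term down at the same rate. Concretely, the pointwise inequality you need reduces to $n\cdot\inf(-\psi''/\psi)>C_g/T_0$ with $C_g=\sup_s\lvert\mathrm{tr}_{g_s}\dot g_s\rvert$, i.e.\ $T_0\cdot\inf(-\psi''/\psi)>C_g/n$; by the bound above this fails whenever $C_g\geq n$, which is not excluded by the hypotheses.

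A simple repair, exploiting the freedom in $R$, is to first replace the path by $\tilde g_s=e^{-2\mu s}g_s$ for large $\mu>0$. This is still a path of Ricci-positive metrics (each $\tilde g_s$ is a rescaling of $g_s$), it ends at $e^{-2\mu}g_1$, and it satisfies $\mathrm{tr}_{\tilde g_s}\dot{\tilde g}_s=-2n\mu+\mathrm{tr}_{g_s}\dot g_s<0$ uniformly. The problematic term then becomes a \emph{positive} contribution to $\Ric(\partial_t,\partial_t)$ rather than a competitor, and the remaining corrections (involving $s''$ and $(s')^2$) are genuinely one order smaller in $1/T_0$, so your warped-cylinder ansatz goes through after choosing $T_0$ large depending on $\mu$ and the path.
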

	
	Further, to satisfy the assumption on having positive semi-definite second fundamental form in Proposition~\ref{P:gluing}, we have the following deformation result.
	
	\begin{proposition}[{\cite[Proposition 1.2.11]{Bu19a}}]\label{P:deform_II>=0}
		Let $(M^n,g)$ be a Riemannian manifold of positive Ricci curvature that has positive semi-definite second fundamental form on the boundary. Then there exists a Riemannian metric $g'$ of positive Ricci curvature on $M$ with positive second fundamental form on the boundary such that $g|_{\partial M}=g'|_{\partial M}$.
	\end{proposition}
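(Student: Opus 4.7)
The plan is to perform an explicit conformal-type deformation of $g$ supported in a collar neighborhood of $\partial M$ that leaves $g|_{\partial M}$ unchanged, strictly decreases the normal derivative $g_0'$ (thereby making $\II$ strictly positive), and perturbs the ambient metric only by a $C^2$-small amount, so that positive Ricci curvature is preserved.

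Concretely, I would first fix a tubular neighborhood $U\cong [0,\varepsilon)\times \partial M$ of $\partial M$ with $g=dt^2+g_t$ and $\{0\}\times \partial M = \partial M$. By Lemma \ref{L:curv_form}, the second fundamental form of $\{t\}\times \partial M$ with respect to $\partial_t$ equals $\tfrac{1}{2}g_t'$, so with respect to the outward normal at $t=0$ it is $-\tfrac{1}{2}g_0'$; the hypothesis $\II_{\partial M}\geq 0$ therefore reads $g_0'\leq 0$ as a bilinear form on $T\partial M$. Next, choose a smooth cutoff $\psi\colon [0,\varepsilon)\to \R$ with $\psi(0)=0$, $\psi'(0)=-1$, and $\psi\equiv 0$ on $[\delta,\varepsilon)$ for some $\delta\in (0,\varepsilon)$, and for a small parameter $c>0$ set
\[ \tilde{g} = dt^2+(1+c\psi(t))g_t \]
on $U$, extended by $g$ on $M\setminus U$. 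Since $\psi$ vanishes identically near $t=\delta$, $\tilde g$ is a smooth metric on all of $M$; since $\psi(0)=0$, the induced metric on $\partial M$ is unchanged; and since $\psi$ is bounded, $\tilde g$ stays positive definite as long as $c$ is small.

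Computing $\tilde{g}_0'=c\psi'(0)g_0+g_0'=-cg_0+g_0'$, the new outward boundary second fundamental form is
\[ \tilde{\II}_{\partial M} = -\tfrac{1}{2}\tilde{g}_0' = \tfrac{c}{2}g_0 + \II_{\partial M} \geq \tfrac{c}{2}g_0 > 0, \]
so the boundary becomes strictly convex in $\tilde g$. The main and only substantive step is preserving positive Ricci curvature. With $\psi$ and $\delta$ held fixed, the perturbation $\tilde g - g = c\psi(t) g_t$ has $C^2$-norm of order $c$, and by the formulas of Lemma \ref{L:curv_form} the Ricci tensor depends continuously on the metric in the $C^2$-topology. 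Since $\Ric_g$ is uniformly positive on the compact set $[0,\delta]\times \partial M$, for all sufficiently small $c$ we have $\Ric_{\tilde g}>0$ there, while $\Ric_{\tilde g}=\Ric_g>0$ outside $U$ automatically. Any such $c$ then yields the required metric $g':=\tilde g$. The crucial point to get right, which I expect to be the main obstacle, is the normalization: $\psi'(0)$ must be a fixed constant of order one so that the strict improvement $\tilde{\II}-\II\geq \tfrac{c}{2}g_0$ persists as $c\to 0$, while the single amplitude parameter $c$ simultaneously controls the $C^2$-size of the perturbation and hence the possible loss of Ricci positivity.
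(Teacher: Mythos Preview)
Your argument is correct. The paper does not give its own proof of this proposition---it simply cites \cite[Proposition 1.2.11]{Bu19a}---so there is no direct comparison to make. The closest thing in the paper is the proof of the generalization Proposition~\ref{P:deform_II>=0_bdry}, where the initial metric is only assumed to have non-negative Ricci curvature. There the deformation takes the form $dt^2+(1+af(t))^2h_t$ with $f(t)=1-e^{Ct}$, and the point of the exponential profile is that $-f''$ dominates $|f'|$ and $|f|$ by a factor of $C$, so that for large $C$ the deformation actively \emph{creates} positive Ricci curvature rather than merely preserving it. Your simpler $C^2$-small perturbation with a fixed cutoff $\psi$ and small amplitude $c$ works precisely because you already have strict Ricci positivity on a compact collar to spend; it would not suffice for the non-negative case. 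One minor point: your uniform-positivity argument tacitly assumes $\partial M$ is compact (so that $\Ric_g\geq\kappa g$ for some $\kappa>0$ on $[0,\delta]\times\partial M$), which is how the result is used throughout the paper but is not stated explicitly in the proposition.
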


	In fact, by using the deformation results of Ehrlich \cite{Eh76}, we can generalize Proposition \ref{P:deform_II>=0} as follows.
	
	\begin{proposition}\label{P:deform_II>=0_bdry}
		Let $(M^n,g)$ be a compact Riemannian manifold of non-negative Ricci curvature and suppose that the second fundamental form on the boundary is non-negative. If there exists a point in the interior of $M$ at which all Ricci curvatures are positive, then there exists a smooth family of metrics $g_t$, $t\in[0,1]$, on $M$ such that the following holds:
		\begin{enumerate}
			\item $g_0=g$,
			\item $g_t$ has non-negative Ricci curvature for all $t\in[0,1]$,
			\item $g_t$ induces the same metric as $g$ on $\partial M$ and has non-negative second fundamental form for all $t\in[0,1]$,
			\item $g_1$ has positive Ricci curvature on $M$ and positive second fundamental form on $\partial M$.
		\end{enumerate}
	\end{proposition}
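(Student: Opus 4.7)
The plan is to carry out the deformation in two stages. First, using Ehrlich's technique \cite{Eh76}, I spread the positivity of the Ricci curvature from the interior point throughout $M$ while leaving the boundary data unchanged; second, I invoke Proposition \ref{P:deform_II>=0} to upgrade the non-negative second fundamental form to a positive one.

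For the first stage, the essential feature of Ehrlich's construction is its local nature: given a point $p$ where $\Ric > 0$, a compactly supported perturbation of $g$ (typically a conformal change in a small ball) can be organised as a smooth one-parameter family of non-negatively Ricci-curved metrics, ending in a metric for which the open region of positive Ricci curvature is strictly larger. Iterating along a chain of balls covering $M$, and using that the set $\{\Ric > 0\}$ is open while $M$ is connected, one fills out all of $M$. In a first pass I keep all perturbations supported in the interior of $M$, yielding a metric $\tilde g$ with $\Ric > 0$ outside a small collar of $\partial M$ and with $\tilde g \equiv g$ on this collar; conditions (2) and (3) are then preserved trivially. In a second pass, perturbations are now allowed to reach $\partial M$, but the conformal factor is chosen to vanish to second order at $\partial M$. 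Such a factor leaves the induced boundary metric unchanged and multiplies the shape operator by a positive function, hence preserves $\II \geq 0$, producing a metric $g^\ast$ with $\Ric > 0$ on all of $M$, $g^\ast|_{\partial M} = g|_{\partial M}$, and $\II_{g^\ast} \geq 0$. Proposition \ref{P:deform_II>=0} then yields a further smooth deformation to a metric $g_1$ with positive Ricci curvature and positive second fundamental form, and concatenating with the previous families after reparametrisation to $[0,1]$ gives the required $g_t$.

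The main obstacle is the second pass: arranging the Ehrlich-type perturbation so that it strictly increases the Ricci curvature up to and including $\partial M$ while genuinely preserving the boundary metric and the shape operator. The fact that $\II \geq 0$ and $\Ric \geq 0$ are open conditions among perturbations with suitable boundary vanishing, together with the flexibility in the Hessian of the conformal factor (which controls the change of $\Ric$ under a conformal deformation), makes this balance achievable. I expect the technical core of the proof to consist of specifying the precise conformal factors used and verifying, via the standard conformal-change formulas for Ricci curvature and for the second fundamental form (compare Lemma \ref{L:curv_form}), that all required properties persist throughout the family.
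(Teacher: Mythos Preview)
Your first stage---applying Ehrlich's deformation in the interior to spread positive Ricci curvature to a compact set away from $\partial M$, leaving a collar untouched---matches the paper exactly. The divergence is in the second stage, where the real work lies.

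The paper does \emph{not} use a conformal change near the boundary. Instead, in a collar $[0,\varepsilon]\times\partial M$ where the metric is written as $dt^2+h_t$, it replaces $h_t$ by $(1+af(t))^2h_t$ while leaving $dt^2$ alone, with the explicit choice $f(t)=1-e^{Ct}$ on $[0,\tfrac{\varepsilon}{2}]$. The point of this choice is the inequality $-f''\geq C|f'|$: when one computes the Ricci curvatures via Lemma~\ref{L:curv_form}, the helpful term $-af''/(1+af)$ dominates all error terms (which are controlled by $|f'|$ and $|f|$) once $C$ is large and $a$ is small. This deformation simultaneously makes the second fundamental form strictly positive at $t=0$ (since $f'(0)=-C<0$), so no separate appeal to Proposition~\ref{P:deform_II>=0} is needed.

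Your conformal approach is not obviously wrong, but it is underspecified at the crucial point. If the conformal factor $e^{2\phi}$ has $\phi$ and $d\phi$ vanishing on $\partial M$, then the boundary metric and shape operator are \emph{unchanged} (not merely multiplied by a positive function), and the Ricci correction at $\partial M$ comes entirely from $\nabla d\phi$ and $\Delta\phi$. Arranging $\phi$ so that this correction is strictly positive at every boundary point, while keeping $\Ric\geq0$ throughout the collar for all parameters in the family, requires exactly the kind of quantitative control the paper achieves with its explicit $f$; you have not indicated how to get it. Also note that Proposition~\ref{P:deform_II>=0} as stated produces a single metric, not a smooth family, so your concatenation step at the end needs a further word.
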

	\begin{proof}
		We consider the metric $g$ in normal coordinates to the boundary, i.e.\ we can identify a tubular neighbourhood of $\partial M$ with $[0,\varepsilon]\times \partial M$ and the metric on this part is of the form $dt^2+h_t$, where $h_t$ is a smooth family of Riemannian metrics on $\partial M$. Then the subset 
		\[M_{\frac{\varepsilon}{4}}=\{x\in M\mid d(p,\partial M)\geq\tfrac{\varepsilon}{4} \}\]
		is compact. By possibly choosing $\varepsilon$ smaller if necessary, we can assume that $M_{\frac{\varepsilon}{4}}$ contains a point at which all Ricci curvatures are positive.
		
		By the deformation results of Ehrlich \cite[Proposition in Section 5]{Eh76}, there is a smooth family of metrics $g_s$, $s\in[0,\delta]$ with $g_0=g$ and for all $s\in(0,\delta]$ the metric $g_s$ has non-negative Ricci curvature, strictly positive Ricci curvature on $M_{\frac{\varepsilon}{4}}$ and coincides with $g$ outside $M_{\frac{\varepsilon}{8}}$. Note that the statement of \cite[Proposition in Section 5]{Eh76} requires a compact set in a complete non-compact manifold of non-negative Ricci curvature. However, only the metric in a neighbourhood of the compact subset is relevant for the proof, so that we can apply this result in our situation as well.
		
		We again write $g_s$ in normal coordinates in a tubular neighbourhood of $\partial M$ as $dt^2+h_t^s$, and for $s$ sufficiently small, we can assume that $g_s$ has this form on the tube $[0,\frac{3\varepsilon}{4}]\times \partial M$, cf.\ \cite{Eh74}, and that $g_s$ has positive Ricci curvature for $t\geq \frac{3\varepsilon}{8}$. We fix such $s$ and write again $g$ for $g_s$ and $\varepsilon$ for $\frac{3}{4}\varepsilon$, i.e.\ we have that $g=dt^2+h_t$ on $[0,\varepsilon]\times \partial M$ and $g$ has positive Ricci curvature for $t\geq \frac{\varepsilon}{2}$.
		
		To modify $g$ near the boundary let $C>0$ and let $f\colon[0,\varepsilon]\to\R$ be a smooth function satisfying the following:
		\begin{enumerate}
			\item $f(t)=1-e^{Ct}$ for $t\in[0,\frac{\varepsilon}{2}]$,
			\item $f$ and all its derivatives vanish at $t=\varepsilon$.
		\end{enumerate}
		Then, for $a\geq0$ sufficiently small, we define a smooth metric $g_{a,C}$ on $M$ by
		\[
		g_{a,C}=\begin{cases}
			dt^2+(1+af(t))^2h_t,\quad & \text{on }[0,\varepsilon]\times \partial M,\\
			g,\quad &\text{else.}
		\end{cases}
		\]
		Note that $g_{0,C}=g$ and $a\mapsto g_{a,C}$ is a smooth deformation of $g$ that coincides with $g$ on $\partial M$. We will now show that for $C$ sufficiently large and $a$ sufficiently small, the metric $g_{a,C}$ has strictly positive Ricci curvature on $M$ and strictly positive second fundamental form on $\partial M$.
		
		We set $\tilde{h}_t=(1+af(t))^2h_t$ and calculate
		\begin{align*}
			\tilde{h}_t'&=2af'(1+af)h_t+(1+af)^2h_t'=(1+af)^2\left( 2a\frac{f'}{1+af}h_t+h_t' \right),\\
			\tilde{h}_t''&=2a(f''(1+af)+a{f'}^2)h_t+4af'(1+af)h_t'+(1+af)^2h_t''\\
			&=(1+af)^2\left( 2a\frac{f''}{1+af}h_t+2a^2\frac{{f'}^2}{(1+af)^2}h_t+4a\frac{f'}{1+af}h_t'+h_t'' \right).
		\end{align*}
		Since $f'(0)<0$, it follows from Lemma \ref{L:curv_form} that the second fundamental form at the boundary is positive for all $a$ and $C$ (note that the outward unit normal is $-\partial_t$ in this case).
		
		Further, by Lemma \ref{L:curv_form}, there exists a constant $c>0$ that depends on $h_t$, $h_t'$ and $h_t''$ such that the Ricci curvatures of $g_{a,C}$ can be estimated as follows (note that the Ricci tensor is invariant under scaling):
		\begin{align*}
			\Ric(\partial_t,\partial_t)&\geq -a(n-1)\frac{f''}{1+af}-a\frac{|f'|}{1+af}c+\Ric^g(\partial_t,\partial_t),\\
			|\Ric(\tilde{v},\partial_t)-\Ric^g(\tilde{v},\partial_t)|&\leq a\frac{|f'|}{1+af}c,\\
			\Ric(\tilde{v},\tilde{v})&\geq -a\frac{f''}{1+af}-c\left(\left| 1-\frac{1}{(1+af)^2} \right|+ a\frac{|f'|}{1+af}\right)+(1+af)^2\Ric^g(\tilde{v},\tilde{v}),
		\end{align*}
		where $\tilde{v}$ is a unit tangent vector of $\partial M$ with respect to $\tilde{h}_t$. Hence, for any unit tangent vector $\tilde{u}$ of $g_{a,C}$, we can estimate
		\[ \Ric(\tilde{u},\tilde{u})\geq \Ric^g(\tilde{u},\tilde{u})+a\left( \frac{-f''}{1+af}-c\left( \frac{1}{a}\left| 1-\frac{1}{(1+af)^2} \right|+\frac{|f'|}{1+af} \right) \right). \]
		
		Since $g$ has positive Ricci curvature for $t\geq \frac{\varepsilon}{2}$, it follows that $g_{a,C}$ also has positive Ricci curvature for $t\geq \frac{\varepsilon}{2}$ for fixed $C$ and for $a$ sufficiently small.
		
		For $t\leq \frac{\varepsilon}{2}$ we have $f(t)=1-e^{Ct}$, so
		\[ C|f'|=-f'' \]
		and since $|f(t)|=e^{Ct}-1\leq e^{Ct}= \frac{-f''(t)}{C^2}$, we obtain
		\[ \left| 1-\frac{1}{(1+af)^2} \right|=a|f|\left| \frac{2+af}{(1+af)^2} \right|\leq -3a\frac{f''}{C^2(1+af)} \]
		for $a$ so small that $af\leq 1$. Hence, using $\Ric^g\geq 0$, we obtain
		\[ \Ric(\tilde{u},\tilde{u})\geq a\frac{-f''}{1+af}\left( 1-c\left( \frac{3}{C^2}+\frac{1}{C} \right) \right). \]
		For $C$ sufficiently large, this expression is positive, showing that the Ricci curvatures are strictly positive for $t\in[0,\frac{\varepsilon}{2}]$.
	\end{proof}
	
	In a similar way, we can prove the following generalisation of Proposition \ref{P:gluing}.
	
	\begin{proposition}\label{P:gluing_Ric>=0}
		In the situation of Proposition \ref{P:gluing} suppose that $g_1$ and $g_2$ merely have non-negative Ricci curvature and $M_1$ contains a point in its interior where all Ricci curvatures are strictly positive. Then the glued manifold $M_1\cup_{\phi}M_2$ admits a Riemannian metric of positive Ricci curvature.
	\end{proposition}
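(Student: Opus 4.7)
The plan is to reduce the statement to Proposition~\ref{P:gluing} by upgrading both $g_1$ and $g_2$ to Riemannian metrics $g_1'$ and $g_2'$ of strictly positive Ricci curvature with strictly positive second fundamental form on the respective boundaries, while preserving the induced boundary metrics so that $\phi\colon\partial M_1\to\partial M_2$ remains an isometry and $\II_{g_1'}+\phi^*\II_{g_2'}>0$ holds automatically. The main auxiliary tool is Proposition~\ref{P:deform_II>=0_bdry}.

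Since $(M_1,g_1)$ carries an interior point of positive Ricci, Proposition~\ref{P:deform_II>=0_bdry} applies to $g_1$ once we have arranged $\II_{g_1}\geq 0$ on $\partial M_1$. This can be achieved by a preliminary warped-collar deformation of the form $g\mapsto dt^2+(1+af(t))^2 h_t$ appearing in the proof of Proposition~\ref{P:deform_II>=0_bdry}, applied in a collar of $\partial M_1$: such a deformation is local near the boundary, preserves the induced boundary metric, shifts the boundary second fundamental form by the positive-definite quantity $-af'(0)\,g_1|_{\partial M_1}$, and preserves non-negativity of the Ricci curvature for an appropriate choice of $f$, $a$, $C$. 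After this preliminary step the hypotheses of Proposition~\ref{P:deform_II>=0_bdry} hold, and we obtain the desired $g_1'$. For $(M_2,g_2)$, the same preliminary collar deformation does two things simultaneously: it shifts $\II_{g_2}$ to be non-negative, and, by the calculation in the proof of Proposition~\ref{P:deform_II>=0_bdry} applied to the concave portion of $f$, creates strictly positive Ricci in a boundary collar of $\partial M_2$, thereby producing an interior point of positive Ricci on $M_2$. A second application of Proposition~\ref{P:deform_II>=0_bdry} then yields $g_2'$. Because all deformations preserve the induced boundary metrics, $\phi$ remains an isometry between $(\partial M_1,g_1'|_{\partial M_1})$ and $(\partial M_2,g_2'|_{\partial M_2})$, and both $\II_{g_1'},\II_{g_2'}$ are strictly positive definite, so $\II_{g_1'}+\phi^*\II_{g_2'}>0$. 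An application of Proposition~\ref{P:gluing} then produces a Riemannian metric of positive Ricci curvature on $M_1\cup_\phi M_2$.

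The main obstacle is the preliminary warped-collar deformation, which, unlike the analogous construction in the proof of Proposition~\ref{P:deform_II>=0_bdry}, must preserve non-negative Ricci curvature without the aid of an Ehrlich-type deformation providing strict positivity of Ricci in the transition region where $f$ is smoothly tapered back to zero. To handle this one chooses $f$ so that the strictly concave portion, where $-f''>0$ contributes positively to Ricci, dominates the perturbation coming from the tapering; combined with the hypothesis $\Ric_g\geq 0$ and the freedom to scale $a$, $C$, and the collar width, this allows both the required shift of $\II$ and the preservation of non-negative Ricci globally. An alternative, essentially equivalent route, is to glue first using a non-negative Ricci variant of Perelman's construction (made available by the strict positivity of $\II_{g_1'}+\phi^*\II_{g_2}$ after upgrading only $g_1$) and then invoke Ehrlich's classical theorem on the resulting closed manifold, which has non-negative Ricci and an interior point of positive Ricci inherited from $M_1$.
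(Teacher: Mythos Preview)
Your main approach has a genuine gap in the treatment of $M_2$. The preliminary warped-collar deformation you propose must satisfy $f(0)=0$ (to preserve the boundary metric), $f'(0)<0$ (to shift $\II_{g_2}$ upward), and $f^{(k)}(\varepsilon)=0$ for all $k$ (to glue smoothly with $g_2$). These constraints force $f''>0$ somewhere on $(0,\varepsilon)$. At such a point, if $\Ric_{g_2}=0$---which is allowed by the hypotheses and occurs for instance when $g_2$ is a product near $\partial M_2$---the deformed metric has strictly negative Ricci curvature: in the product case $h_t\equiv h_0$, Lemma~\ref{L:curv_form} gives exactly $\Ric(\partial_t,\partial_t)=-(n-1)\,a f''/(1+af)$, which is negative wherever $f''>0$. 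Your suggestion that ``the strictly concave portion dominates the tapering'' cannot help, because the obstruction is pointwise, not an averaged one. Hence you cannot manufacture an interior point of positive Ricci on $M_2$ by this deformation while preserving $\Ric\geq 0$, and Proposition~\ref{P:deform_II>=0_bdry} never becomes available for $M_2$.

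Your alternative route is essentially what the paper does, but it skips a step. The paper upgrades $g_1$ as you describe so that $\II_{g_1'}+\phi^*\II_{g_2}>0$ strictly, and then---before gluing---modifies $g_2$ in a collar by a warping with $f''<0$ throughout and $-f''\geq Cf'$, with $f$ and all its derivatives vanishing at $t=\varepsilon$. This forces $f'(0)>0$, so $\II_{g_2}$ is slightly \emph{decreased} and the boundary metric is rescaled, both harmless for small $a$; in exchange one obtains $\Ric>0$ on the entire collar of $M_2$. Perelman's smoothing (Proposition~\ref{P:gluing}) modifies the metric only near the gluing hypersurface, where both pieces now have strictly positive Ricci, so it applies verbatim and produces $\Ric>0$ near the gluing area and $\Ric\geq 0$ globally; Ehrlich's deformation on the closed glued manifold then finishes. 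Without this extra modification of $g_2$ you would need a $\Ric\geq 0$ version of Proposition~\ref{P:gluing}, which is neither stated nor proved in the paper.
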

	\begin{proof}
		We use a similar deformation as in Proposition \ref{P:deform_II>=0_bdry} applied to $g_1$ so that the sum $\II_{\partial M_1}+\phi^*\II_{\partial M_2}$ is strictly positive. Similarly as in the proof of Proposition \ref{P:deform_II>=0_bdry} then replace the metric $g_2$ by the metric $g_{a,C}$ for $a,C>0$ on $M_2$ defined by
		\[ g_{a,C}=\begin{cases}
			dt^2+(1+af(t))^2h_t,\quad & \text{on }[0,\varepsilon]\times \partial M_2,\\
			(1+af(\varepsilon))^2 g_2,\quad & \text{else,}
		\end{cases} \]
		where we have written $g_2$ near a tubular neighbourhood $[0,\varepsilon]\times\partial M_2$ of $\partial M_2$ as $dt^2+h_t$ and $f$ is a smooth function satisfying $f(\varepsilon)=0$, $f''<0$, all derivatives of $f$ vanish at $t=\varepsilon$ and
		\[ -f''\geq C f'. \]
		
		Similar arguments as in the proof of Proposition \ref{P:deform_II>=0_bdry} show that the resulting metric has non-negative Ricci curvature and strictly positive Ricci curvature on $[0,\varepsilon]\times \partial M_2$ for $a$ sufficiently small and $C$ sufficiently large, while the induced metric on the boundary only changes by a scalar multiple and the second fundamental form converges to that of $g_2$ as $a\to 0$. Hence, we can glue this metric to the modified metric $g_1$ for $a>0$ sufficiently small and obtain positive Ricci curvature by Proposition \ref{P:gluing} in a neighbourhood of the gluing area, and non-negative Ricci curvature globally. By the deformation in \cite{Eh76} we can deform this metric into a metric of positive Ricci curvature.
	\end{proof}
	
	We will also need the following deformation result, that allows to combine two given metrics on a manifold.
	
	\begin{proposition}[{\cite[Theorem 1.10]{Wr02}, see also \cite[Theorem 1.2]{BH22}}]\label{P:local_flex}
		Let $(M^n,g)$ be a Riemannian manifold of positive Ricci curvature and let $N\subseteq M$ be a closed embedded submanifold. Let $U\subseteq M$ be an open neighbourhood of $N$ and let $g'$ be a Riemannian metric of positive Ricci curvature on $U$. If the $1$-jets of $g$ and $g'$ coincide on $N$, then there exists a smooth family $g_t$ of Ricci-positive metrics on $M$ such that
		\begin{enumerate}
			\item $g_0=g$,
			\item $g_1$ coincides with $g$ on $M\setminus U$ and with $g'$ in a neighbourhood of $N$.
		\end{enumerate}
	\end{proposition}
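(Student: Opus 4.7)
The plan is to interpolate $g$ and $g'$ by a cut-off-weighted family supported in a thin tubular neighbourhood of $N$, leveraging the $1$-jet hypothesis to keep the perturbation small in $C^1$. Writing $h := g' - g$, the hypothesis forces $h|_N = 0$ and $\nabla h|_N = 0$, so in Fermi coordinates around $N$ one has the pointwise estimates $|h| = O(r^2)$ and $|\nabla h| = O(r)$, where $r$ denotes the $g$-distance to $N$.

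Concretely, I would fix a smooth bump $\chi \colon \R \to [0,1]$ with $\chi \equiv 1$ near $0$ and $\operatorname{supp}\chi \subseteq [-1,1]$. For $\delta>0$ small enough that $\{r<\delta\}\subseteq U$ lies inside the injectivity tube of the normal exponential map of $N$, set $\chi_\delta(p) := \chi(r(p)/\delta)$ and consider the two-parameter family
\[
g_{t,\delta} := g + t\,\chi_\delta\,h, \qquad (t,\delta) \in [0,1]\times(0,\delta_0].
\]
Then $g_{0,\delta} = g$, $g_{t,\delta} \equiv g$ outside $\{r<\delta\}$, and $g_{1,\delta} = g'$ on the inner tube $\{r<\delta/2\}$; the bound $|\chi_\delta h|_{C^0} = O(\delta^2)$ makes each $g_{t,\delta}$ positive definite for small $\delta$. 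The goal is to fix a single $\delta_*\in(0,\delta_0]$ so that $g_{t,\delta_*}$ has positive Ricci curvature for every $t \in [0,1]$, and then set $g_t := g_{t,\delta_*}$.

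The central difficulty is verifying $\Ric$-positivity uniformly in $t$. At a point $p \in N$, since $h$ and $\nabla h$ vanish there, $g$ and $g_{t,\delta}$ share the same $0$- and $1$-jets at $p$, so their Christoffel symbols coincide and $\Ric(g_{t,\delta})(p)$ depends only linearly on the $2$-jet $\nabla^2 g_{t,\delta}(p) = (1-t)\nabla^2 g(p) + t\nabla^2 g'(p)$. This yields the cut-off-independent pointwise identity
\[
\Ric(g_{t,\delta})|_N = (1-t)\,\Ric(g)|_N + t\,\Ric(g')|_N > 0.
\]
Away from $N$ this identity fails and one is left with an $O(1)$ perturbation of $\Ric(g)$: while $|\chi_\delta h|_{C^1} = O(\delta)$, the second covariant derivative is only uniformly bounded because the worst term $(\nabla^2\chi_\delta)\cdot h$ has size $\delta^{-2}\cdot\delta^2 = 1$, so openness of Ricci-positivity in $C^2$ does not apply directly. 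Overcoming this is the main obstacle; I would address it by expanding $\Ric(g_{t,\delta})$ explicitly in Fermi coordinates, where the dominant second-derivative contribution takes a transparent form in terms of $\chi(r/\delta)$, its derivatives, and the quadratic Taylor coefficient of $h$ along the normal bundle of $N$. Combining this explicit expression with the $t$-linearity identity above (as a pointwise anchor on $N$), the $C^1$-smallness of the perturbation, and a uniform lower bound on $\Ric(g)$ and $\Ric(g')$ on a precompact neighbourhood of $N$, one can show that $\Ric(g_{t,\delta_*})$ stays bounded below by a positive constant throughout $\{r<\delta_*\}$ for all $t\in[0,1]$ once $\delta_*$ is small enough. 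Patching with the trivial positivity on $M\setminus\{r<\delta_*\}$ then finishes the argument.
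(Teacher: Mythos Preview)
The paper does not supply its own proof of this proposition: it is stated with a citation to \cite{Wr02} and \cite{BH22} and used as a black box, so there is nothing to compare your argument against in the paper itself.

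On its own merits, your outline follows the same strategy as the cited references: interpolate via $g + t\chi_\delta(g'-g)$ and exploit the $1$-jet hypothesis to get $|h|=O(r^2)$, $|\nabla h|=O(r)$. You correctly identify the crux, namely that $(\nabla^2\chi_\delta)h$ contributes an $O(1)$ term to the curvature, so $C^2$-openness of positive Ricci does not suffice. However, your resolution of this point is where the genuine content lies, and you have not actually carried it out. The phrase ``one can show that $\Ric(g_{t,\delta_*})$ stays bounded below'' hides precisely the computation that makes this theorem non-trivial. In Fermi coordinates one must write out the Ricci tensor of $g_{t,\delta}$ explicitly, observe that the potentially dangerous second-derivative terms involving $\chi''(r/\delta)\delta^{-2}$ hit only the normal Hessian of $h$, and verify that the resulting expression is, up to $O(\delta)$ errors, a convex combination of quantities controlled by $\Ric(g)$ and $\Ric(g')$ not just on $N$ but throughout the tube. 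The linearity identity you state on $N$ is the right anchor, but extending it off $N$ with uniform control is exactly what \cite{Wr02} spends its effort on; asserting that ``combining'' the ingredients yields the bound is not a proof. If you intend to include a self-contained argument, this step must be written out in full.
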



	To finish this section, we show that the definitions of core metrics given in Definition \ref{D:core}, \cite{Bu19} and \cite{Re22a} are all equivalent.
	\begin{lemma}\label{L:core_equ}
		Let $M^n$ be a manifold. Then the following are equivalent:
		\begin{enumerate}
			\item $M$ admits a core metric (Definition \ref{D:core}).
			\item $M$ admits a Riemannian metric of positive Ricci curvature and an embedding $D^n\subseteq M$ such that the induced metric on the boundary $\partial(M\setminus {D^n}^\circ)$ is round and has positive second fundamental form (Definition in \cite{Bu19}).
			\item $M$ admits a Riemannian metric of positive Ricci curvature and an embedding $D^n\subseteq M$ such that the induced metric on the boundary $\partial(M\setminus {D^n}^\circ)$ is round and has non-negative second fundamental form (Definition in \cite{Re22a}).
		\end{enumerate}
	\end{lemma}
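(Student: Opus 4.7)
The plan is to establish the cyclic implications $(1) \Rightarrow (2) \Rightarrow (3) \Rightarrow (1)$. The implication $(2) \Rightarrow (3)$ is immediate since a positive second fundamental form is a fortiori non-negative, so the work lies in the other two implications.

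For $(1) \Rightarrow (2)$, the idea is to use Proposition~\ref{P:local_flex} to extend the isometrically embedded hemisphere past its equator. Let $H \subseteq (M, g)$ denote the image of the isometric embedding. Since the equator of the round hemisphere is totally geodesic in $S^n_1$ and $H$ is a full-dimensional isometric submanifold of $M$, the hypersurface $\partial H \subseteq M$ has vanishing second fundamental form. In a tubular neighbourhood $U$ of $\partial H$ we may therefore write $g = dt^2 + h_t$ with $h_0 = ds_{n-1}^2$ and $h_0' = 0$. The local model metric $g' = dt^2 + \cos^2(t)\, ds_{n-1}^2$ on $U$ has positive Ricci curvature, being a piece of $S^n_1$, and its $1$-jet along $\partial H$ coincides with that of $g$. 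Proposition~\ref{P:local_flex} then yields a Ricci-positive metric $g^{\mathrm{new}}$ on $M$ agreeing with $g$ outside $U$ and with $g'$ in a smaller neighbourhood $\{|t| < \delta\}$. The set $H \cup \{0 \leq t \leq \delta\}$ is thus isometric to the spherical cap of geodesic radius $\pi/2 + \delta$ in $S^n_1$, and the principal curvatures of its boundary with respect to the outward normal of the complement equal $\tan(\delta) > 0$. Taking this cap as the embedded disc yields $(2)$.

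For $(3) \Rightarrow (1)$, denote the radius of $\partial D^n$ by $r$. First apply Proposition~\ref{P:deform_II>=0} to $(M \setminus {D^n}^\circ, g)$ to obtain a Ricci-positive metric $g'$ with the same boundary metric and strictly positive second fundamental form; by compactness there exists $c_0 > 0$ with $\II^{g'} \geq c_0\, g'|_\partial$. Next, construct a cap $C = [0, L] \times S^{n-1}$ with warped-product metric $ds^2 + \sin^2(s)\, ds_{n-1}^2$ collapsing at $s = 0$, which is isometric to the spherical cap of geodesic radius $L$ in $S^n_1$. Choose $L \in (\pi/2, \pi)$ close enough to $\pi/2$ that $\cos(L) + c_0 r > 0$. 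After rescaling $g'$ by the factor $\sin(L)/r$ so that the two boundaries become isometric, the sum of principal curvatures at the gluing interface equals $(\cos(L) + c_0 r)/\sin(L) > 0$. Proposition~\ref{P:gluing} then produces a Ricci-positive metric on the glued manifold, which is diffeomorphic to $M$, and by the proposition it coincides with the cap metric on $[0, L - \varepsilon] \times S^{n-1}$ for any preassigned $\varepsilon > 0$. Choosing $\varepsilon < L - \pi/2$ leaves the region $[0, \pi/2] \times S^{n-1}$ untouched, so the hemisphere of radius $1$ is isometrically embedded, establishing $(1)$.

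The main technical step is the verification of $1$-jet matching in the $(1) \Rightarrow (2)$ direction, which reduces to the vanishing of the second fundamental form at $\partial H$, together with a careful choice of parameters $(L, \varepsilon)$ in $(3) \Rightarrow (1)$. The cap used there automatically has positive Ricci curvature as a subset of $S^n_1$, and the admissible window for $L$ is non-empty because $\cos(L) + c_0 r$ is strictly positive at $L = \pi/2$ and depends continuously on $L$.
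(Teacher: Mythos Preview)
Your argument is essentially correct, but it takes a slightly more circuitous route than the paper and has one minor imprecision. The paper uses the cycle $(1)\Rightarrow(3)\Rightarrow(2)\Rightarrow(1)$: the implication $(1)\Rightarrow(3)$ is immediate (the equator of the embedded hemisphere is round and totally geodesic, hence has non-negative second fundamental form), $(3)\Rightarrow(2)$ follows from Proposition~\ref{P:deform_II>=0}, and $(2)\Rightarrow(1)$ is exactly your cap-gluing argument via Proposition~\ref{P:gluing}. Your $(3)\Rightarrow(1)$ step is precisely the paper's $(3)\Rightarrow(2)\Rightarrow(1)$ composed into one, so there the two proofs coincide. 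The difference lies in $(1)\Rightarrow(2)$: you invoke Proposition~\ref{P:local_flex} to extend the hemisphere past its equator, whereas the paper avoids this entirely by passing through $(3)$ first. Your use of Proposition~\ref{P:local_flex} is legitimate, but your claim that ``$H\cup\{0\le t\le\delta\}$ is isometric to the spherical cap of geodesic radius $\tfrac{\pi}{2}+\delta$'' is not quite justified by the statement of that proposition: it only guarantees that $g^{\mathrm{new}}$ equals $g'$ near $\partial H$ and equals $g$ outside $U$, leaving open what happens on the interpolation annulus $H\cap U\setminus\{|t|<\delta\}$. This does not damage your conclusion---for $(2)$ you only need the slice $\{t=\delta/2\}$ to be round with positive second fundamental form, not the whole disc to be a spherical cap---but the sentence as written overstates what has been shown. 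The simplest fix is to bypass Proposition~\ref{P:local_flex} altogether and observe, as the paper does, that $(1)\Rightarrow(3)$ is trivial.
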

	\begin{proof}
		By Proposition \ref{P:deform_II>=0}, items (2) and (3) are equivalent. Further, (1) implies (3) since a core metric satisfies the requirements of the metric in (3). Finally, suppose that (2) holds. Then, using Proposition \ref{P:gluing} we can glue a geodesic ball in $S^n$ which is slightly bigger than a hemisphere to $\partial(M\setminus{D^n}^\circ)$, which results in a Riemannian metric of positive Ricci curvature on $M$ with an embedded round hemisphere, i.e.\ a core metric.
	\end{proof}
	
	\section{Construction of the main building block}\label{S:build_block}
	
	In this section we construct a specific metric on $[0,1]\times M$, where $M^n$ is a manifold admitting a core metric. This \textquotedblleft building block\textquotedblright\ will be the main ingredient in the proof of Theorem \ref{T:core_bdl}.
	
	The goal is to introduce a corner at $\{1\}\times S^{n-1}$ in the boundary component $\{1\}\times M$ of angle less than $\frac{\pi}{2}$, while having positive Ricci curvature on $[0,1]\times M$ and positive second fundamental form on $\{1\}\times M$. 
	The metric on $[0,1]\times M$ will, roughly speaking, be close to a cone metric from which we cut out a piece of $[0,1]\times D^n$ along a hypersurface. This hypersurface is tangent to a slice $\{t\}\times M$ near the centre of $\{t\}\times D^n\subseteq \{t\}\times M$, and almost vertical at $\{1\}\times S^{n-1}$, thus creating the desired angle. To keep the second fundamental form of the resulting boundary component positive, the slope of this hypersurface cannot increase to quickly in radial directions of $D^n$. For this to be possible it will be essential to first \textquotedblleft elongate\textquotedblright\ the embedded disc $D^n\subseteq M$ as follows:
	
	\begin{lemma}\label{L:cone_metric}
		Let $M^n$ be a manifold that admits a core metric $g$ and let $K\in(0,1)$ such that $\Ric_g\geq K^2(n-1)g$. Then, for any $\varepsilon_1,\varepsilon_2>0$ sufficiently small, there exists a Riemannian metric $\cone{g}$ on $M$ with $\Ric_{\cone{g}}\geq (n-1)\cone{g}$ and an embedding $D^n\subseteq M$, such that the following holds:
		\begin{enumerate}
			\item There is an annulus $A\subseteq D^n$, which we identify with $[\varepsilon_1,\frac{\pi}{2}+\varepsilon_2]\times S^{n-1}$, on which the metric $\cone{g}$ is given by
			\[  ds^2+K^2\sin^2(s)ds_{n-1}^2 \]
			for all $s\in[\varepsilon_1,\frac{\pi}{2}+\varepsilon_2]$,
			\item The metrics $\cone{g}$ and $g$ lie in the same path component of the space of Ricci-positive metrics on $M$.
		\end{enumerate}
	\end{lemma}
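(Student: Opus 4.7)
The key observation is that the target metric $ds^2 + K^2\sin^2(s)\,ds_{n-1}^2$ on the annulus automatically satisfies $\Ric \geq (n-1)g$: with $R = K\sin$ we have $R'' = -R$, so by Lemma~\ref{L:doubly_warped_curv} the radial Ricci equals $n-1$ and the tangential Ricci equals $1 + (n-2)(1-K^2\cos^2 s)/(K^2\sin^2 s) \geq n-1$, the latter precisely because $K\leq 1$. My plan is to realise this warped product on the target annulus, cap it smoothly at the origin, and graft the resulting disc into $M$ while rescaling the complement to absorb the induced-metric mismatch.

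For the warping function I would construct a smooth $R\colon [0,\pi/2+\varepsilon_2] \to [0,\infty)$ which is odd at $0$ with $R(0)=0$, $R'(0)=1$, equals $K\sin(s)$ on $[\varepsilon_1,\pi/2+\varepsilon_2]$, and gives $\Ric \geq (n-1)(ds^2 + R^2 ds_{n-1}^2)$ throughout. On the cap $[0,\varepsilon_1]$ I write $R = K\sin + h$ for an odd bump $h$ supported in $[0,\varepsilon_1]$ with $h(0)=0$, $h'(0)=1-K$, and all derivatives vanishing at $\varepsilon_1$; the Ricci inequality reduces to $h''+h \leq 0$ together with a third-derivative condition at the origin needed to keep $R^2 + R'^2 \leq 1$ near $s=0$, both of which hold when $\varepsilon_1$ is small relative to $1-K$. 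Next I invoke Proposition~\ref{P:local_flex} to deform $g$ near the equator of the embedded hemisphere so that $g$ equals the analytic extension $ds^2+\sin^2(s)\,ds_{n-1}^2$ on an enlarged hemisphere $\tilde H^n$ parametrised by $s\in[0,\pi/2+\varepsilon_2]$; the $1$-jets agree along the equator because the equator is totally geodesic in $g$. I then define $\cone{g}$ to equal $ds^2 + R(s)^2\,ds_{n-1}^2$ on $\tilde H^n$ and $K^2 g$ on $M\setminus\tilde H^n$. On the complement we have $\Ric_{K^2 g}=\Ric_g \geq K^2(n-1)g = (n-1)(K^2 g)$ by hypothesis, and the induced metrics on the two sides of $\partial \tilde H^n$ match, both equalling $K^2\cos^2(\varepsilon_2)\,ds_{n-1}^2$.

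A direct computation shows the sum of the second fundamental forms at $\partial \tilde H^n$, with respect to the respective outward normals, equals $(1-K)K^{-1}\tan(\varepsilon_2) > 0$ times the induced metric; the two pieces therefore glue to a $C^0$ metric that can be smoothed in a thin collar. The main obstacle I expect is that this smoothing must preserve the sharp bound $\Ric \geq (n-1)\cone{g}$, not merely the $\Ric > 0$ guaranteed by Proposition~\ref{P:gluing}; I would handle this by replacing the collar with a doubly-warped-product interpolation in the spirit of Proposition~\ref{P:bdl_warping}, using the strict positivity of the summed second fundamental forms to maintain an open margin in the Ricci inequality as the collar is thinned. Finally, condition~(2) follows by interpolating the construction: $R_t = (1-t)\sin + tR$ on $\tilde H^n$ and the complement scaled by $\bigl((1-t) + tK\bigr)^2$ gives a continuous path of Ricci-positive metrics from (a Proposition~\ref{P:local_flex}-deformation of) $g$ to $\cone{g}$.
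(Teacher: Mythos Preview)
Your overall strategy matches the paper's --- extend the hemisphere via Proposition~\ref{P:local_flex}, install a warped profile $K\sin$ on an annulus, and scale the complement by $K^2$ --- but your cap construction has a genuine gap. No function $h$ with $h(0)=0$, $h'(0)=1-K$, all derivatives vanishing at $\varepsilon_1$, and $h''+h\leq 0$ on $[0,\varepsilon_1]$ exists: setting $W=h'\sin - h\cos$ one finds $W'=(h''+h)\sin\leq 0$ on $[0,\varepsilon_1]\subset(0,\pi)$, while $W(0)=W(\varepsilon_1)=0$; hence $W\equiv 0$, which forces $h=(1-K)\sin$, contradicting $h(\varepsilon_1)=0$. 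No smallness of $\varepsilon_1$ helps, so the radial bound $R''+R\leq 0$ cannot hold on the cap with your ansatz $R=K\sin+h$.

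The paper avoids this by capping with a \emph{shifted} sine $\sin(s-s_0)$ --- a piece of the unit round sphere --- rather than a perturbation of $K\sin$, and it inserts a third shifted-sine piece at the outer edge so that the warping matches the complement's round collar to all orders \emph{before} the global rescaling by $K$. The metric is then smooth on all of $M$ from the start, the Ricci bound reduces to the elementary inequalities $f''/f\leq -K^2$ and $(1-f'^2)/f^2\geq K^2$ on each of the three sine pieces, and the concave smoothings at the two interior breakpoints (where the left derivative exceeds the right) preserve both. Running this with parameter $K_t=1-t(1-K)$ gives the isotopy for~(2) directly. By contrast, in your scheme the $C^0$ glue at $\partial\tilde H^n$ --- which, incidentally, is handled by an elementary concave smoothing of the warping function rather than anything resembling Proposition~\ref{P:bdl_warping} --- would have to be carried out coherently along the entire path $R_t$, a complication the paper's construction sidesteps.
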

	\begin{proof}
		Since $g$ is a core metric, there exists an embedded disc $D^n\subseteq M$ on which $g$ is given by $ds^2+\sin^2(s)$, where $s\in[0,\frac{\pi}{2}]$. By applying Proposition \ref{P:local_flex} to the boundary of $M\setminus{D^n}^\circ$, which is a round and totally geodesic sphere $S^{n-1}$, we can in fact assume that $g$ has this form for $s\in[0,\frac{\pi}{2}+\varepsilon_2']$ for some sufficiently small $\varepsilon_2'>0$.
		
		For $t\in[0,1]$ we set $K_t=1-t(1-K)$ and define the function $f_t\colon[(1-K)t\frac{\varepsilon_1}{2K},\frac{\pi}{2K_t}+\frac{\varepsilon_2'}{2}(1+K_t^{-1})]\to[0,1]$ by
		\[ f_t(s)=\begin{cases}
			\sin(s-(1-K)\frac{\varepsilon_1}{2K}t),\quad & s\in[(1-K)t\frac{\varepsilon_1}{2K},\frac{\varepsilon_1}{2K}], \\
			\sin(sK_t),\quad & s\in[\frac{\varepsilon_1}{2K},\frac{\pi+\varepsilon_2'}{2}K_t^{-1} ],\\
			\sin(s-\frac{\pi+\varepsilon_2'}{2}(K_t^{-1}-1)),\quad & s\in[\frac{\pi+\varepsilon_2'}{2}K_t^{-1},\frac{\pi}{2K_t}+\frac{\varepsilon_2'}{2}(1+K_t^{-1})].
		\end{cases} \]
		Note that $f_t$ is continuous, and smooth except at $s_1=\frac{\varepsilon_1}{2K}$ and $s_2=\frac{\pi+\varepsilon_2'}{2}K_t^{-1}$ with ${f_t'}_-(s_i)>{f_t'}_+(s_i)$ at these points. We smooth $f_t$ near these points as follows:
		
		For $s_1=\frac{\varepsilon_1}{2K}$ and $\delta>0$ small we replace $f_t$ on $[s_1-\delta,s_1+\delta]$ by $\sin(\chi_\delta)$, where $\chi_\delta\colon[s_1-\delta,s_1+\delta]\to\R$ is a smooth function with $\chi_\delta(s)=s-(1-K)\frac{\varepsilon_1}{2K}t$ near $s=s_1-\delta$, $\chi_\delta=sK_t$ near $s=s_1+\delta$, $\chi_\delta'\in[K_t,1]$ and $\chi_\delta''<0$. We also assume that $\chi_\delta$ varies smoothly in $t$. Similarly, we smooth $f_t$ around $s_2=\frac{\pi+\varepsilon_2'}{2}K_t^{-1}$.
		
		Hence, we constructed a family of smooth functions $f_t$ that vary smoothly in $t$ and a computation shows that $f_t$ satisfies
		\[ \frac{f_t''}{f_t}\leq -K_t^2\quad\text{and}\quad \frac{1-{f_t}^2}{f_t}\geq K_t^2. \]
		By construction, the metric $ds^2+f_t(s)^2 ds_{n-1}^2$ for $s\in[(1-K)t\frac{\varepsilon}{2},\frac{\pi}{2K_t}+\frac{\varepsilon_2'}{2}(1+K_t^{-1})]$ on $D^n$ now glues smoothly with the metric $g$ on $M\setminus D^n$, so that we obtain the desired isotopy of metrics and property (1) for $t=1$ by setting $\varepsilon_2=\frac{\varepsilon_2'}{2}(1-\delta)$ and rescaling by $K$.
		
		Finally, for a unit vector $X$ in $(S^{n-1},ds_{n-1}^2)$, the Ricci curvatures of the metric $ds^2+f_t(s)^2ds_{n-1}^2$ (e.g.\ using Lemma \ref{L:doubly_warped_curv}) are given by
		\begin{align*}
			\Ric(\partial_s,\partial_s)&=-(n-1)\frac{f_t''}{f_t}\geq (n-1)K_t^2,\\
			\Ric(X,\partial_s)&=0,\\
			\Ric(\tfrac{X}{f_t},\tfrac{X}{f_t})&=-\frac{f_t''}{f_t}+(n-2)\frac{1-{f_t'}^2}{f_t^2}\geq (n-1)K_t^2.
		\end{align*}
		Hence for $t=1$, we after rescaling by $K$, we obtain the lower Ricci curvature bound as required.
	\end{proof}
	
	Using this modified core metric, we will now construct the main building block.
	
	\begin{proposition}\label{P:handle1}
		Let $(M^n,g)$ be a Riemannian manifold as in Lemma \ref{L:cone_metric} and for $\varepsilon_1,\varepsilon_2>0$ sufficiently small consider the metric $\cone{g}$ on $M$. We consider $[0,1]\times M$ as a manifold with a corner along $\{1\}\times S^{n-1}$, dividing the boundary component $\{1\}\times M$ into the two faces $\{1\}\times D^n$ and $\{1\}\times (M\setminus {D^n}^\circ)$. Then there exists $\lambda\in(0,1)$ and a Riemannian metric $h$ of positive Ricci curvature on $M\times [0,1]$ such that the following holds:
		\begin{enumerate}
			\item $h$ induces the metric $\cone{g}$ on $\{0\}\times M$ with principal curvatures all at least $-\lambda$,
			\item $h$ induces a metric of positive sectional curvature of the form $ds^2+A(s)^2ds_{n-1}^2$ on the face $\{1\}\times D^n$,
			\item $h$ induces a metric of positive Ricci curvature on the face $\{1\}\times M\setminus{D^n}^\circ$, which is given by $ds^2+B(s)^2ds_{n-1}$ near the boundary $\{1\}\times S^{n-1}$ with positive  second fundamental form on the boundary,
			\item The angles at the corner $\{1\}\times S^{n-1}$ are strictly less than $\frac{\pi}{2}$,
			\item The boundary component $\{1\}\times M$ is convex.
		\end{enumerate}
	\end{proposition}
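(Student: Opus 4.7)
The plan is to build $h$ on a cylinder $[0,T]\times M$ equipped with a warped product metric, and identify $M\times[0,1]$ with a subregion $\Omega$ whose boundary over $D^n$ is the graph of a cleverly chosen function. First I equip $[0,T]\times M$ with $\overline g = dt^2 + f(t)^2\,\cone{g}$, where $f$ is the function supplied by Lemma~\ref{L:fH} for some $0 < \lambda_1 < \lambda_2 < 1$. Applying the warped-product formulas of Lemma~\ref{L:curv_form} together with $\Ric_{\cone{g}}\geq (n-1)\cone{g}$ from Lemma~\ref{L:cone_metric} and the properties $f''<0$, $f'\leq\lambda_2<1$ of Lemma~\ref{L:fH}, the metric $\overline g$ has positive Ricci curvature. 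The second fundamental form of each slice $\{t\}\times M$ is $(f'(t)/f(t))\cdot g_t$; in particular the slice $\{0\}\times M$ has outward principal curvatures equal to $-\lambda_2$, giving item~(1) with $\lambda = \lambda_2$.

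Next, in the coordinates of Lemma~\ref{L:cone_metric} where $\cone{g}|_A = ds^2 + K^2\sin^2(s)\,ds_{n-1}^2$ on the annulus $A\cong[\varepsilon_1,\pi/2+\varepsilon_2]\times S^{n-1}\subseteq D^n$, I choose a smooth function $\alpha\colon[0,\pi/2+\varepsilon_2]\to[0,T]$ satisfying:
(a) $\alpha\equiv 0$ on a neighbourhood of $s=0$, so it extends smoothly across the origin of $D^n$;
(b) $\alpha$ is weakly increasing with $\alpha(\pi/2+\varepsilon_2)=T$;
(c) $0\leq \alpha'(s) < f'(\alpha(s))\,f(\alpha(s))\,\tan s$ on $[s_1,\pi/2)$, so that Lemma~\ref{L:II_graph_warped} yields positive principal curvatures in the tangential $S^{n-1}$ directions on the graph $\{t=\alpha(s)\}$;
(d) $\alpha''$ is controlled so that the radial principal curvature $f'f + 2(f'/f)\alpha'^2-\alpha''$ of the graph remains positive; and
(e) $\alpha'(\pi/2+\varepsilon_2)$ is large enough to force the dihedral angle at $\{T\}\times \partial D^n$ strictly below $\pi/2$.
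Set $\Omega = \{(t,x)\in [0,T]\times M : 0\leq t\leq \alpha(s(x))\text{ if } x\in D^n,\ \text{else } 0\leq t\leq T\}$. Then $\Omega \cong M\times[0,1]$, and its upper boundary decomposes into the top face $\{T\}\times(M\setminus{D^n}^\circ)$ and the graph face, meeting along the corner $\{T\}\times\partial D^n$.

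The verification of items~(2)--(5) then runs as follows. For~(2), the induced metric on the graph face, after reparameterising by arc length, becomes a warped product $d\tilde s^2 + A(\tilde s)^2\,ds_{n-1}^2$ with $A(\tilde s) = f(\alpha(s))\,K\sin s$; on the inner part where $\alpha\equiv 0$ this reduces to $ds^2 + K^2\sin^2(s)\,ds_{n-1}^2$, matching smoothly at the origin, and positive sectional curvature ($A''<0$, $|A'|<1$) follows from Lemma~\ref{L:doubly_warped_curv} once one traces through the derivatives of $A$ using $f''<0$, $f'<1$ and the chosen $\alpha$. For~(3), the top face carries the rescaled metric $f(T)^2\,\cone{g}|_{M\setminus{D^n}^\circ}$, whose Ricci tensor agrees with that of $\cone{g}$ (hence positive), takes the warped form $ds^2 + B(s)^2\,ds_{n-1}^2$ near $\partial(M\setminus{D^n}^\circ)$ in the annular coordinates, and has second fundamental form $(f'(T)/f(T))\cdot g|_{\text{top}}$, which is strictly positive. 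Item~(4) is built into the construction through the endpoint slope of~$\alpha$, and item~(5) follows from positivity of the second fundamental forms on both faces combined with~(4).

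The hardest step is reconciling constraints~(c) and~(e): the slope bound $\alpha'<f'f\tan s$ must hold throughout the annulus, yet $\alpha'$ must simultaneously grow large enough at $s = \pi/2+\varepsilon_2$ to drive the corner angle below $\pi/2$. This is where Lemma~\ref{L:fH} plays its role: the pointwise bound $f'(t)/f(t) > \lambda_1/(1+\lambda_1 t)$ keeps the product $f'(t)f(t)$ uniformly bounded away from zero as $t$ approaches $T$, while $\tan s\to\infty$ as $s\to\pi/2$ provides the headroom needed to drive $\alpha'$ to arbitrarily large values near the endpoint without violating positivity of the graph's second fundamental form. The choice of $\alpha$ with these competing demands is the technical heart of the proof.
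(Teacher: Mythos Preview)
Your overall strategy---warped product $\overline g=dt^2+f(t)^2\cone g$ with $f$ from Lemma~\ref{L:fH}, then cutting along the graph of a radial function $\alpha$---matches the paper's. But the angle claim in~(e) is wrong, and this breaks item~(4). With a monotone $\alpha$ meeting a horizontal top face $\{t=T\}$, the two tangent directions at the corner are $-\alpha'\partial_t-\partial_s$ (into the graph face) and $\partial_s$ (into the top face), so
\[
\cos\theta=\frac{\overline g(-\alpha'\partial_t-\partial_s,\ \partial_s)}{\lVert-\alpha'\partial_t-\partial_s\rVert\,\lVert\partial_s\rVert}=\frac{-f(T)^2}{\sqrt{\alpha'^2+f(T)^2}\,f(T)}<0,
\]
hence $\theta>\pi/2$ for every finite $\alpha'$, with $\theta\to\pi/2^{+}$ as $\alpha'\to\infty$. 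Driving the endpoint slope large brings the angle \emph{toward} $\pi/2$ but from the wrong side; it never drops below.

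The paper avoids this by placing the corner at $s=\pi/2$, strictly inside the annulus, and letting the graph \emph{descend} again on the far side via $\alpha(s)=\alpha(\tfrac{\pi}{2})+\beta(s)$ with $\beta'(\tfrac{\pi}{2})=-2<0$. The $M\setminus D^n$ face is then the union of this descending graph over $[\tfrac{\pi}{2},\tfrac{\pi}{2}+\varepsilon_2]$ with a constant-height slice. At the corner both face tangents now point downward in $t$, and the angle computation becomes $\cos\theta=(2\alpha_-'(\tfrac{\pi}{2})-f^2)/(\cdots)$, which is positive once $\alpha_-'(\tfrac{\pi}{2})$ is large---achieved by the explicit choice $\alpha(s)=\lambda_1^{-1}(\sec(\lambda_1(s-\varepsilon_1))-1)$ with $\lambda_1\nearrow 1$, $\varepsilon_1\searrow 0$. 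The ``overshoot and bend back'' is the missing geometric idea.

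A secondary issue: with $\alpha\equiv 0$ near $s=0$ your region $\Omega$ has zero thickness there and is not a manifold with corners. The paper handles this by working on $[-\delta,\infty)\times M$ rather than $[0,T]\times M$, so the slab over the centre of $D^n$ has positive width $\delta$.
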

	Figure \ref{F:handle1} contains a sketch of the metric constructed in Proposition \ref{P:handle1}.
	\begin{figure}
		\includegraphics[scale=0.4]{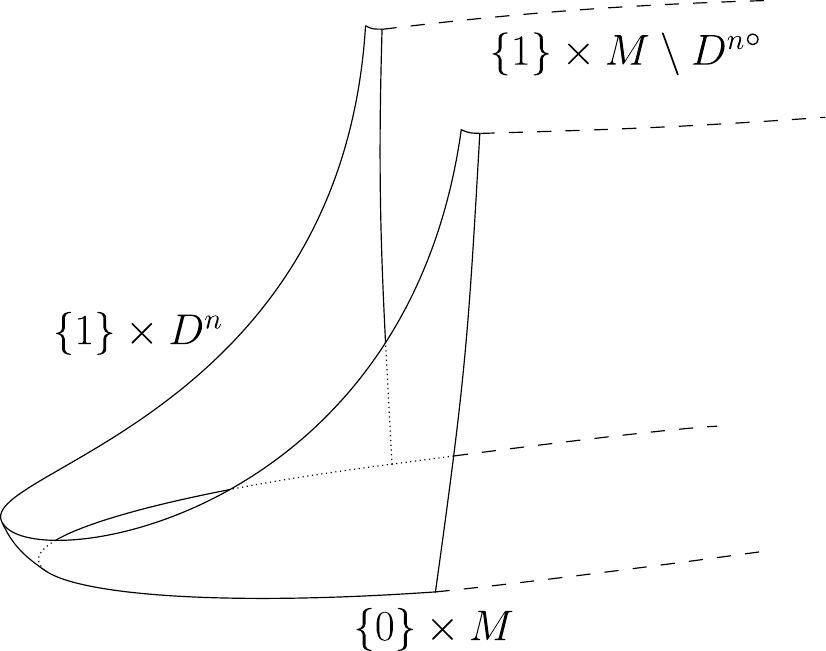}
		\caption{Sketch of the metric constructed in Proposition \ref{P:handle1}.}\label{F:handle1}
	\end{figure}
	\begin{proof}
		We consider a metric of the form
		\[ \overline{g}=dt^2+f(t)^2\cone{g} \]
		on $[-\delta,\infty)\times M$, where $f$ is the function obtained in Lemma \ref{L:fH} for some $0<\lambda_1<\lambda_2<1$, i.e.\ we have
		\begin{enumerate}
			\item $f(0)=1$, $f'(0)=\lambda_2$,
			\item $f''<0$,
			\item $f'>\lambda_1$ and $\frac{f'(t)}{f(t)}>\frac{\lambda_1}{1+\lambda_1 t}$.
		\end{enumerate}
		By choosing $\delta$ smaller if necessary, we can assume that $f'(-\delta)<1$ and we set $\lambda=f'(-\delta)$, so that, after rescaling, the metric on the boundary component $\{-\delta\}\times M$ is given by $\cone{g}$ with principal curvatures given by $-\lambda$. Further, since $f''<0$ and $f'\in(0,1)$, the metric $\overline{g}$ has positive Ricci curvature.
		
		For the other boundary component we will define $\rho\colon M\to[0,\infty)$ and consider the manifold
		\[ M_\rho=\{ (t,x)\in [-\delta,\infty)\times M\mid t\leq \rho(x) \}. \]
		Recall from Lemma \ref{L:cone_metric} that there exists an annulus $A\subseteq D^n\subseteq M$, which we identify with $[\varepsilon_1,\frac{\pi}{2}+\varepsilon_2]\times S^{n-1}$, on which $\cone{g}$ is given by $ds^2+K^2\sin^2(s)ds_{n-1}$. On this piece we define $\rho$ by $\rho(s,x)=\alpha(s)$, where, for some $\varepsilon>0$ and for $\lambda_1$ sufficiently close to $1$ and $\varepsilon_1$ sufficiently small, the function $\alpha$ is given by
		\[ \alpha(s)=\begin{cases}
			\frac{1}{\lambda_1}\left( \frac{1}{\cos(\lambda_1(s-\varepsilon_1))}-1 \right),\quad & s\in[\varepsilon_1,\frac{\pi}{2}],\\
			\alpha(\frac{\pi}{2})+\beta(s),\quad & s\in[\frac{\pi}{2},\frac{\pi}{2}+\varepsilon_2].
		\end{cases} \]
		Here $\beta\colon[\frac{\pi}{2},\frac{\pi}{2}+\varepsilon_2]\to[-1,0]$ is a smooth function satisfying $\beta(\frac{\pi}{2})=0$, $\beta'(\frac{\pi}{2})=-2$, and all derivatives of $\beta$ vanish at $s=\frac{\pi}{2}+\varepsilon_2$.
		
		Further, we define $\rho\equiv 0$ on the piece of $D^n$ bounded by $A$, and $\rho\equiv \alpha(\frac{\pi}{2})+\beta(\frac{\pi}{2}+\varepsilon_2)$ on $M\setminus {D^n}^\circ$. For $\rho$ to be smooth at $s=\varepsilon_1$ we modify $\alpha$ in a small neighbourhood of $s=\varepsilon_1$, for example as in \cite[Lemma 3.1]{RW23a} (and note that $\alpha'(\varepsilon_1)=0$), so that all its derivatives vanish at $s=\varepsilon_1$ and the second derivative in this neighbourhood lies in the interval $[-\delta',\alpha''(0)+\delta']$ for an arbitrarily small $\delta'>0$. For $\delta'$ sufficiently small the subsequent computations will then also hold for the smoothed function.
		
		We will now show that for suitable choices of $\lambda_1,\lambda_2$ and $\varepsilon_1$ the manifold $(M_\rho,\overline{g})$ satisfies the required properties. For that, first note that outside of the piece $\{ \varepsilon_1\leq s\leq\frac{\pi}{2}+\varepsilon_2 \}$ the required properties directly follow from the fact that $\rho$ is locally constant and $f'>0$. Thus, we are left with the case $s\in[\varepsilon_1,\frac{\pi}{2}+\varepsilon_2]$.
		
		We will first consider the second fundamental form. For the case $s\in[\varepsilon_1,\frac{\pi}{2}]$ we obtain from Lemma \ref{L:II_graph_warped} the following:
		\begin{align*}
			\II(\partial_s+\alpha'\partial_t,\partial_s+\alpha'\partial_t)&>\frac{1}{\sqrt{1+\frac{{\alpha'}^2}{f^2}}}\left( \lambda_1(1+\lambda_1\alpha)+2\frac{\lambda_1}{1+\lambda_1\alpha}{\alpha'}^2-\alpha'' \right),\\
			\II(\partial_s+\alpha'\partial_t,X)&=0,\\
			\II(X,X)&>\frac{\cone{g}(X,X)}{\sqrt{1+\frac{{\alpha'}^2}{f^2}}}\left( \lambda_1(1+\lambda_1\alpha)-\frac{\cos(s)}{\sin(s)}\alpha' \right).
		\end{align*}
		We have
		\begin{align*}
			\alpha'(s)&=\frac{\sin(\lambda_1(s-\varepsilon_1))}{\cos^2(\lambda_1(s-\varepsilon_1))},\\
			\alpha''(s)&=\lambda_1\frac{1+\sin^2(\lambda_1(s-\varepsilon_1))}{\cos^3(\lambda_1(s-\varepsilon_1))},
		\end{align*}
		which shows that $\II(\partial_s+\alpha'\partial_t,\partial_s+\alpha'\partial_t)$ is strictly positive, while $\II(X,X)$ is bounded from below by
		\[ \II(X,X)>\frac{\cone{g}(X,X)\cot(s)}{\cos(\lambda_1(s-\varepsilon_1))\sqrt{1+\frac{{\alpha'}^2}{f^2}}}\left( \lambda_1\tan(s)-\tan(\lambda_1(s-\varepsilon_1)) \right).  \]
		We can use the convexity and monotonicity of the tangent function to estimate
		\[  \tan(\lambda_1(s-\varepsilon_1))<\tan(\lambda_1 s)<\lambda_1\tan(s). \]
		Hence, $\II(X,X)>0$, so the second fundamental form is strictly positive.
		
		For $s\in[\frac{\pi}{2},\frac{\pi}{2}+\varepsilon_2]$ we again use Lemma \ref{L:II_graph_warped}. Note that $\alpha(\frac{\pi}{2})\to\infty$ as $\varepsilon_1\to 0$ and $\lambda_1\to 1$, while $\alpha'$ and $\alpha''$ remain constant. In particular, $f(\alpha(s))\to\infty$ as $\varepsilon_1\to 0$ and $\lambda_1\to 1$ for all $s\in[\frac{\pi}{2},\frac{\pi}{2}+\varepsilon_2]$. It follows that $\II>0$ for $\varepsilon_1$ sufficiently small and $\lambda_1$ sufficiently close to $1$.
		
		Next, we consider the induced metric on the boundary $\partial M_\rho$. For the calculations we replace $f$ by the simpler function $t\mapsto \lambda_1 t+1$. It follows from the construction in Lemma \ref{L:fH} that $f$ smoothly converges to this function on compact subsets as $\lambda_2\to\lambda_1$. Hence, if the induced metric on the boundary has positive curvature, resp.\ positive Ricci curvature for this function, the same is true for $\lambda_2$ sufficiently close to $\lambda_1$.
		
		We first assume $s\in [\varepsilon_1,\frac{\pi}{2}]$. For any fixed $x\in S^{n-1}$ consider the path $\gamma$ in $\partial M_\rho$ defined by $\gamma(r)=(\alpha(s(r)),s(r),x)$, where $s(r)$ is defined by
		\[ s(r)=\frac{\arctan(\lambda_1 r)}{\lambda_1}+\varepsilon_1 \]
		and $r$ is such that $s(r)\in[\varepsilon_1, \frac{\pi}{2}]$. Then $\gamma'(r)$ is a unit vector for the metric $dt^2+(1+\lambda_1 t)^2\cone{g}$, hence, we can express the metric on the piece $\varepsilon_1\leq s\leq \frac{\pi}{2}$ as
		\[ dr^2+(1+\lambda_1\alpha(s(r)))^2K^2\sin^2(s(r))ds_{n-1}^2. \]
		We set
		\[\varphi(r)=(1+\lambda_1\alpha(s(r)))\sin(s(r))=\sin\left(\frac{\arctan(\lambda_1 r)}{\lambda_1}+\varepsilon_1\right)\sqrt{1+\lambda_1^2r^2}. \]
		A calculation shows that the first two derivatives of $\varphi$ are given as follows:
		\begin{align*}
			\varphi'(r)&=\frac{\cos\left(\frac{\arctan(\lambda_1 r)}{\lambda_1}+\varepsilon_1\right)+\lambda_1^2 r \sin\left(\frac{\arctan(\lambda_1 r)}{\lambda_1}+\varepsilon_1\right)}{\sqrt{1+\lambda_1^2r^2}},\\
			\varphi''(r)&=\frac{\sin\left(\frac{\arctan(\lambda_1 r)}{\lambda_1}+\varepsilon_1\right)}{(1+\lambda_1^2r^2)^{\frac{3}{2}}}(\lambda_1^2-1).
		\end{align*}
		We have $\varphi'(0)=\cos(\varepsilon_1)<1$ and $\varphi''<0$. Hence, all sectional curvatures are positive.
		
		For $s\in[\frac{\pi}{2},\frac{\pi}{2}+\varepsilon_2]$ the metric on the boundary is given by
		\[ g_1=(\alpha'(s)^2+f(\alpha(s))^2)ds^2+f(\alpha(s))^2K^2\sin^2(s)ds_{n-1}^2, \]
		which we can write as
		\[ g_1=f(\alpha(s))^2\left( \left(\frac{\beta'(s)^2}{f(\alpha(s))^2}+1\right)ds^2+K^2\sin^2(s)ds_{n-1}^2 \right). \]
		Since $\alpha(s)\in[\alpha(\frac{\pi}{2})-1,\alpha(\frac{\pi}{2})]$ and $\alpha(\frac{\pi}{2})\to\infty$ as $\varepsilon_1\to0$ and $\lambda_1\to 1$, we obtain $\frac{f(\alpha(s))}{f(\alpha(\frac{\pi}{2}))}\to 1$ as $\varepsilon_1\to0$ and $\lambda_1\to 1$. Hence, after rescaling by $f(\alpha(\frac{\pi}{2}))^{-1}$, this metric smoothly converges to $dr^2+K^2\sin^2(r)ds_{n-1}^2$. Thus, for $\varepsilon_1$ sufficiently small and $\lambda_1$ sufficiently close to $1$, the sectional curvature is positive. Moreover, we obtain from the Koszul formula for the second fundamental form at $s=\frac{\pi}{2}$ for a unit tangent vector $X\in TS^{n-1}$:
		\begin{align*}
			\II_{S^{n-1}}(X,X)&=g_1\left(\nabla^{g_1}_X\left(-\frac{1}{\sqrt{\beta'(\frac{\pi}{2})^2+f(\alpha(\frac{\pi}{2}))^2}}\partial_s \right),X\right)\\
			&=-\frac{f(\alpha(\tfrac{\pi}{2}))f'(\alpha(\tfrac{\pi}{2}))\beta'(\tfrac{\pi}{2})}{\sqrt{\beta'(\frac{\pi}{2})^2+f(\alpha(\frac{\pi}{2}))^2}}>0 \\
		\end{align*}
		
		
		Finally, it remains to determine the angle $\theta$ at the corner $s=\frac{\pi}{2}$, i.e.\ we need to determine the angle between the tangent vectors $\alpha'_-(\frac{\pi}{2})\partial_t+\partial_s$ and $-\beta'(\frac{\pi}{2})\partial_t-\partial_s$. Again by replacing $f$ by $t\mapsto \lambda_1 t+1$, we obtain
		\begin{align*}
			\cos(\theta)&=-\frac{\alpha_-'(\frac{\pi}{2})\beta'(\frac{\pi}{2})+f(\alpha(\frac{\pi}{2}))^2}{\sqrt{\alpha_-'(\frac{\pi}{2})^2+f(\alpha(\frac{\pi}{2}))^2}\sqrt{\beta'(\frac{\pi}{2})^2+f(\alpha(\frac{\pi}{2}))^2}},
		\end{align*}
		which is positive if and only if
		\begin{align*}
			0&<-\alpha_-'(\tfrac{\pi}{2})\beta'(\tfrac{\pi}{2})-f(\alpha(\tfrac{\pi}{2}))^2\\
			&=2\frac{\sin(\lambda_1(\frac{\pi}{2}-\varepsilon_1))}{\cos^2(\lambda_1(\frac{\pi}{2}-\varepsilon_1))}-\frac{1}{\cos^2(\lambda_1(\frac{\pi}{2}-\varepsilon_1))}.
		\end{align*}
		We have $\lambda(\frac{\pi}{2}-\varepsilon_1)\to\frac{\pi}{2}$ as $\varepsilon_1\to 0$ and $\lambda_1\to 1$, so that $\cos(\theta)>0$ for $\varepsilon_1$ sufficiently small and $\lambda_1$ sufficiently close to $1$, which implies $\theta<\frac{\pi}{2}$.

	\end{proof}

	\begin{proposition}\label{P:handle2}
		Let $(M^n,g)$ be a compact Riemannian manifold of positive Ricci curvature with convex boundary $\partial M\cong S^{n-1}$ and suppose that near the boundary the metric is of the form $ds^2+B(s)^2ds_{n-1}^2$, where $s$ denotes the distance to the boundary. Then, for any $\varepsilon,\nu>0$ sufficiently small, there exists a metric of positive Ricci curvature on $[0,1]\times M$ such that the following holds:
		\begin{enumerate}
			\item The induced metric on the face $\{0\}\times M$ is given by $g$ and the principal curvatures are all at least $-\nu$,
			\item The induced metric on the face $[0,1]\times S^{n-1}$ is of the form $dt^2+A(t)^2ds_{n-1}$ and has positive sectional curvature, and the second fundamental form is non-negative and strictly positive near $t=0$.
			\item The angles at the corner $\{0\}\times S^{n-1}$ are all at most $\frac{\pi}{2}+\varepsilon$.
			\item The induced metric $g_1$ on the face $\{1\}\times M$ has positive Ricci curvature and the metric near this face is of the form $dt^2+f(t)^2 g_1$, and all derivatives of $f$ vanish at $t=1$.
		\end{enumerate}
	\end{proposition}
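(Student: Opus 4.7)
The plan is to take $h$ to be a warped product $dt^2+f(t)^2g$ on $[0,1]\times M$, modified in a small collar neighbourhood of the corner $\{0\}\times\partial M$ by replacing the $dt^2$ factor with $\phi(s,t)^2\,dt^2$ for an appropriate cut-off function $\phi$. The point of the modification is to promote the second fundamental form of the sphere face from positive semi-definite, with zero eigenvalue in the $\partial_t$-direction, to strictly positive near $t=0$.

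I first choose a smooth function $f\colon[0,1]\to(0,\infty)$ with $f(0)=1$, $f'(0)=\nu/2$, $f''<0$ on $[0,1)$, $f'(1)=0$, and all higher derivatives of $f$ vanishing at $t=1$; such an $f$ is obtained by integrating $(\nu/2)\chi$ for a standard bump-like $\chi\colon[0,1]\to[0,1]$ with $\chi(0)=1$ and all derivatives vanishing at $t=1$. The unperturbed warped product $\bar g_0=dt^2+f(t)^2g$ has positive Ricci curvature for $\nu$ sufficiently small, by the standard warped product $\Ric$ formula and the positive Ricci lower bound of $g$. It already meets conditions (1), (3), (4): the induced metric on $\{0\}\times M$ is $g$, with principal curvatures equal to $-\nu/2\ge-\nu$; the induced metric on the sphere face is $dt^2+(f(t)B(0))^2\,ds_{n-1}^2$, which has positive sectional curvature since $f''<0$ on $[0,1)$ and $|f'|B(0)<1$; the corner is at angle $\pi/2$; and the metric near $\{1\}\times M$ has the required form by the choice of $f$. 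Only the strict positivity of $\II$ on the sphere face is missing, since a Koszul computation shows that $\II_{\text{sphere face}}(\partial_t,\partial_t)\equiv 0$ for $\bar g_0$.

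To fix this, I replace $\bar g_0$ in a neighbourhood of the corner (in coordinates $(t,s,\theta)\in[0,t_0]\times[0,s_0]\times S^{n-1}$) by
\[
\bar g=\phi(s,t)^2\,dt^2+f(t)^2\bigl(ds^2+B(s)^2\,ds_{n-1}^2\bigr),
\]
where $\phi(s,t)=1+a\,\eta(s)\zeta(t)$ for a small $a>0$, $\eta\colon[0,s_0]\to\R$ is smooth with $\eta(0)=0$, $\eta'(0)<0$ and $\eta\equiv 0$ near $s=s_0$, and $\zeta\colon[0,1]\to[0,1]$ is smooth with $\zeta(0)=1$ and $\zeta\equiv 0$ for $t\ge t_0$; away from the corner, $\bar g=\bar g_0$. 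Since $\phi(0,t)=1$, the induced metric on the sphere face and the corner angle at $(s,t)=(0,0)$ are unchanged, and since $\phi\equiv 1$ for $t\ge t_0$ condition (4) is unaffected. A direct Koszul computation gives
\[
\II_{\text{sphere face}}\!\left(\tfrac{\partial_t}{\phi},\tfrac{\partial_t}{\phi}\right)\bigg|_{s=0}=-\frac{\phi_s(0,t)}{\phi(0,t)\,f(t)}=-\frac{a\,\eta'(0)\,\zeta(t)}{f(t)},
\]
which is strictly positive for $t\in[0,t_0)$ and zero for $t\in[t_0,1]$, so $\II$ on the sphere face is everywhere non-negative and strictly positive near $t=0$. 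The principal curvatures on $\{0\}\times M$ become $-(\nu/2)/\phi(s,0)$; for $a$ sufficiently small one has $\phi(s,0)=1+a\eta(s)\ge1/2$, so these curvatures remain $\ge-\nu$.

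The main obstacle will be checking that the modified metric $\bar g$ still has positive Ricci curvature. Because $\bar g$ depends smoothly on $a$ and reduces to $\bar g_0$ at $a=0$, and because the Ricci tensor depends continuously on the $2$-jet of the metric, the uniform positive lower bound on $\Ric(\bar g_0)$ over the compact manifold $[0,1]\times M$ persists under the perturbation for all sufficiently small $a$, and a bound on $a$ in terms of the $C^2$-norms of $\eta,\zeta$ and the Ricci lower bound of $\bar g_0$ makes the argument quantitative.
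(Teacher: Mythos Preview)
Your approach is correct and genuinely different from the paper's. You perturb the metric directly, replacing $dt^2$ by $\phi(s,t)^2\,dt^2$ in a collar of the corner and then invoking a soft continuity/compactness argument for the Ricci curvature. The paper instead keeps the metric in the form $dt^2+f(t)^2g$ throughout, first extends the warping function $B(s)$ to slightly negative $s$, and then \emph{cuts} the enlarged cylinder along a hypersurface $\{s=\beta(t)\}$ with $\beta(0)=0$, $\beta'(0)=-a$; the sphere face is this curved hypersurface, and its second fundamental form is computed explicitly via the graph formula of Lemma~\ref{L:II_graph_warped}. Your route is shorter and even keeps the corner angle at exactly $\tfrac{\pi}{2}$ rather than $\tfrac{\pi}{2}+O(a)$; the paper's route gives more explicit quantitative control over $\II$ along the whole sphere face without appealing to compactness.

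One small point worth flagging (which equally affects the paper's argument): since all derivatives of $f$ vanish at $t=1$, in particular $f''(1)=0$, so $\Ric_{\bar g_0}(\partial_t,\partial_t)=-n\,f''/f$ vanishes on $\{1\}\times M$. Thus $\bar g_0$ has only $\Ric\geq 0$ there, not strict positivity. This does not damage your perturbation step, because your modification is supported in $\{t\le t_0\}$ where $\Ric_{\bar g_0}$ is uniformly positive; but the final metric $\bar g$ inherits the degeneracy at $t=1$. In the downstream application (gluing along $\{1\}\times M$ to another warped-product piece) this is harmless and can be repaired by an Ehrlich-type deformation as in Proposition~\ref{P:deform_II>=0_bdry}, but you should state this rather than claim strict positivity on all of $[0,1]\times M$.
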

	\begin{proof}
		First note that the metric $dt^2+f(t)^2g$ on $[0,1]\times M$ with a suitable concave function $f$ already satisfies all required properties except that it does not have strictly positive second fundamental form on $[0,1]\times S^{n-1}$ near $t=0$. To achieve this we slightly modify the metric near $t=0$ at the cost of creating an angle strictly bigger than $\frac{\pi}{2}$ at $t=0$.
		
		By assumption the metric $g$ near $S^{n-1}$ is of the form $ds^2+B(s)^2ds_{n-1}^2$ with $s\in[0,\delta)$ for some $\delta>0$. Since $g$ has positive Ricci curvature, we have $B''<0$, and since the second fundamental form at $s=0$ is positive, we have $B'(0)<0$. We slightly extend $B$ to a bigger interval $(-\delta',\delta)$ while preserving $B'<0$ and $B''<0$.
		
		Now let $f\colon[0,\infty)\to(0,\infty)$ be the function obtained in Lemma \ref{L:fH} for some $\lambda_1,\lambda_2\in(0,1)$ and modify it in a small neighbourhood of some large $t_0>0$ to have vanishing derivatives at $t=t_0$ while having $f''(t)<0$ on $[0,t_0)$. We consider the metric $\overline{g}=dt^2+f(t)^2 g$ on $[0,t_0]\times M$. For $\lambda_1,\lambda_2$ sufficiently small, the metric $\overline{g}$ has positive Ricci curvature and satisfies (1) at $t=0$.
		
		Next, for a smooth function $\beta\colon[0,t_0]\to (-\delta',0]$ with $\beta(0)=0$ we consider the manifold
		\[ M'=\{ (t,s,x)\in [0,t_0]\times (-\delta',0]\times S^{n-1}\mid \beta(t)\leq s \}\subseteq [0,t_0]\times (-\delta',0]\times S^{n-1} \]
		and equip it with the metric $dt^2+f(t)^2(ds^2+B(s)^2ds_{n-1}^2)$. Note that it glues smoothly with $([0,t_0]\times M,\overline{g})$ along $[0,t_0]\times S^{n-1}$. We claim that for a suitable choice of $\beta$, this glued manifold satisfies all required properties after reparametrizing the interval $[0,t_0]$ to $[0,1]$.
		
		Let $a>0$ and $b>1$, and define
		\[\beta'(t)=a\left(\frac{t}{b}-1\right)\chi\left(t-b\right) \]
		with $\beta(0)=0$. Here $\chi\colon\R\to\R$ is a smooth function with $\chi|_{(-\infty,1]}\equiv1$, $\chi|_{[0,\infty)}\equiv 0$ and $\chi'|_{(-1,0)}<0$. Then, by definition $\beta$ is constant for $t\geq b$ and $\beta'(0)=-a$.
		
		First, we consider the angle $\theta$ at $t=0$ We have
		\[ \cos(\theta)=\frac{\overline{g}(\partial_s,\partial_t+\beta'(0)\partial_s)}{\lVert \partial_s\rVert\lVert\partial_t+\beta'(0)\partial_s\rVert}=\frac{\beta'(0)}{\sqrt{1+\beta'(0)^2}}=-\frac{a}{\sqrt{1+a^2}}\to 0 \]
		as $a\to 0$. Hence, for $a$ sufficiently small, we have $\theta<\frac{\pi}{2}+\varepsilon$.
		
		Next, note that the second fundamental form on the face $\{\beta(t)=s\}$ is given by the negatives of the expressions in Lemma \ref{L:II_graph_warped} with $\alpha=\beta^{-1}$ (and $R(s)=B(s)$) whenever $\beta'(\beta^{-1}(s))\neq 0$, that is, for $\alpha(s)\in[0,b]$, as we need to consider the (in $t$-direction) downward pointing normal vector in our case. Thus, we need to estimate the expressions
		\[ -f'(\alpha)f(\alpha)-2\frac{f'(\alpha)}{f(\alpha)}{\alpha'}^2+\alpha''\quad\text{ and }\quad -f'(\alpha)f(\alpha)+\frac{B'}{B}\alpha'. \]
		The second expression is positive for $a$ and $\lambda_2$ sufficiently small, since we have $f'(\alpha)f(\alpha)<\lambda_2(1+\lambda_2b)$ and $B'<0$, while $\alpha'<0$ and $\alpha'\to -\infty$ as $a\to 0$.
		
		For the first expression we first calculate
		\[ \alpha'(s)=\frac{1}{\beta'(\beta^{-1}(s))}\quad\text{ and }\quad \alpha''(s)=-\frac{\beta''(\beta^{-1}(s))}{\beta'(\beta^{-1}(s))^3}. \]
		
		Moreover, we have
		\begin{align*}
			\beta''(t)=\frac{a}{b}\chi(t-b)+a\left(\frac{t}{b}-1\right)\chi'(t-b),
		\end{align*}
		so we obtain for the first expression
		\begin{align*}
			-f'(\alpha)f(\alpha)-2\frac{f'(\alpha)}{f(\alpha)}{\alpha'}^2+\alpha''&=\frac{1}{\beta'(\alpha)^2}\left( -\beta'(\alpha)^2f'(\alpha)f(\alpha)-2\frac{f'(\alpha)}{f(\alpha)}-\frac{\beta''(\alpha)}{\beta'(\alpha)} \right)
		\end{align*}
		We have 
		\[ \beta'(\alpha)^2f'(\alpha)f(\alpha)\leq a^2\lambda_2(1+\lambda_2b),\quad \frac{f'(\alpha)}{f(\alpha)}<\frac{\lambda_2}{1+\alpha\lambda_1} \]
		and
		\[ \frac{-\beta''(\alpha)}{\beta'(\alpha)}=\frac{1}{b-\alpha}-\frac{\chi'(\alpha-b)}{\chi(\alpha-b)}\geq \frac{1}{b-\alpha}. \]
		Hence, we obtain
		\begin{align*}
			-\beta'(\alpha)^2f'(\alpha)f(\alpha)-2\frac{f'(\alpha)}{f(\alpha)}-\frac{\beta''(\alpha)}{\beta'(\alpha)}&>-a^2\lambda_2(1+\lambda_2b)-2\frac{\lambda_2}{1+\lambda_1\alpha}+\frac{1}{b-\alpha}.
		\end{align*}
		It now follows that this expression is positive for $b<\frac{1}{2\lambda_2}$ and $a$ sufficiently small.
		
		Finally, the induced metric along the face $\{s=\beta(t)\}$ we obtain
		\[ (1+\beta'(t)^2f(t)^2)dt^2+f(t)^2B(\beta(t))^2ds_{n-1}^2. \]
		This metric converges, smoothly to $dt^2+f(t)^2B(0)^2ds_{n-1}^2$ as $a\to 0$, which has positive curvature for $\lambda_2$ sufficiently small. Hence, for $a$ sufficiently small, (2) holds.
	\end{proof}
	
	Combining Propositions \ref{P:handle1} and \ref{P:handle2} by gluing the two pieces together gives the following corollary.
	
	\begin{corollary}\label{C:handle}
		In the setting of Proposition \ref{P:handle1} there exists $\lambda\in(0,1)$ and a Riemannian metric $h$ of positive Ricci curvature on $[0,1]\times M$ such that the following holds:
		\begin{enumerate}
			\item $h$ induces the metric $\cone{g}$ on $\{0\}\times M$ with principal curvatures all at least $-\lambda$,
			\item $h$ induces a metric of positive sectional curvature of the form $ds^2+A(s)^2ds_{n-1}^2$ on the face $\{1\}\times D^n$ with non-negative second fundamental form,
			\item $h$ is of the form $dt^2+f(t)^2g'$ near the face $\{1\}\times M\setminus{D^n}^\circ$, where $g'$ is a metric of positive Ricci curvature on $\{1\}\times M\setminus{D^n}^\circ$ with positive second fundamental form on the boundary $\{1\}\times S^{n-1}$, and all derivatives of $f$ vanish at $t=1$.
		\end{enumerate}
	\end{corollary}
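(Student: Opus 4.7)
The plan is to concatenate the two building blocks: glue the face $\{1\}\times(M\setminus{D^n}^\circ)$ arising from Proposition \ref{P:handle1} to a corresponding face of an application of Proposition \ref{P:handle2}, and then smooth the interior codimension-two corner produced by this gluing by means of Proposition \ref{P:gluing_corner}.

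First, I apply Proposition \ref{P:handle1} to obtain a metric $h_1$ on $[0,1]\times M$. Write $M'=M\setminus{D^n}^\circ$ and let $g'$ denote the metric that $h_1$ induces on the face $\{1\}\times M'$. By condition~(3) of Proposition \ref{P:handle1}, $g'$ has positive Ricci curvature, convex boundary, and is of the warped product form $ds^2+B(s)^2ds_{n-1}^2$ near $\{1\}\times S^{n-1}$, which is precisely the input required by Proposition \ref{P:handle2}. I then apply Proposition \ref{P:handle2} to $(M',g')$ with parameters $\varepsilon,\nu>0$ to be fixed below, obtaining a metric $h_2$ on $[0,1]\times M'$.

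Next, I glue $h_1$ to $h_2$ along the isometric faces $\{1\}\times M'\subseteq h_1$ and $\{0\}\times M'\subseteq h_2$ using Proposition \ref{P:gluing_corner}. The three hypotheses of that proposition are verified as follows. The second fundamental form $\II_1$ on $\{1\}\times M'$ in $h_1$ is strictly positive by the convexity in condition~(5) of Proposition \ref{P:handle1}, while on $\{0\}\times M'$ in $h_2$ the principal curvatures are at least $-\nu$; thus, for $\nu$ smaller than the minimum of $\II_1$, the sum $\II_1+\phi^*\II_2$ is positive. The angles at the corner $\{1\}\times S^{n-1}\cong\{0\}\times S^{n-1}$ sum to at most $\pi/2+(\pi/2+\varepsilon)$, which is less than $\pi$ once $\varepsilon$ is chosen sufficiently small. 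Finally, the faces intersecting the gluing face at the corner are $\{1\}\times D^n$ from $h_1$ (with strictly positive second fundamental form by condition~(5) of Proposition \ref{P:handle1}) and $[0,1]\times S^{n-1}$ from $h_2$ (with strictly positive second fundamental form near the corner by condition~(2) of Proposition \ref{P:handle2}). Proposition \ref{P:gluing_corner} then yields a metric of positive Ricci curvature on the glued manifold after smoothing the interior corner.

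The glued manifold $([0,1]\times M)\cup_{M'}([0,1]\times M')$ is diffeomorphic to $[0,1]\times M$ (after reparametrising the interval along the $M'$-part), and I verify the three conditions of Corollary \ref{C:handle} on the resulting metric~$h$. Condition~(1) is inherited directly from $h_1$, since the face $\{0\}\times M$ lies outside the smoothing neighbourhood. Condition~(3) is inherited directly from condition~(4) of Proposition \ref{P:handle2}, since the face $\{1\}\times M'$ in $h_2$ lies outside the smoothing neighbourhood; the positivity of the second fundamental form of $\{1\}\times S^{n-1}$ in $(M',g_1)$ follows since $g_1$ is a scalar multiple of $g'$ and $g'$ has convex boundary. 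For condition~(2), the resulting $D^n$-face is the concatenation of $\{1\}\times D^n$ from $h_1$ and $[0,1]\times S^{n-1}$ from $h_2$, both carrying warped product metrics $ds^2+A(s)^2ds_{n-1}^2$ with $A''<0$ (from positive sectional curvature, via Lemma \ref{L:doubly_warped_curv}); by the additional warped product statement in Proposition \ref{P:gluing_corner}, the smoothed face inherits a warped product metric with $A''<0$, and hence positive sectional curvature. Its second fundamental form is strictly positive away from the smoothing neighbourhood on the $h_1$-side and non-negative on the $h_2$-side, yielding a non-negative second fundamental form throughout. The main technical point is that the warped product structures on the two spliced faces must match so that the additional warped product statement of Proposition \ref{P:gluing_corner} applies, and this matching is built into the explicit constructions of $h_1$ and $h_2$.
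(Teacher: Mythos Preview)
Your proof is correct and follows essentially the same approach as the paper: glue the output of Proposition~\ref{P:handle1} to that of Proposition~\ref{P:handle2} along the $M\setminus{D^n}^\circ$ faces via Proposition~\ref{P:gluing_corner}, choosing $\nu$ and $\varepsilon$ small enough, and invoke the warped product addendum to control the $D^n$-face. One small slip: as written, the angle bound ``at most $\pi/2+(\pi/2+\varepsilon)$'' equals $\pi+\varepsilon$, which is not $<\pi$; you need to use that the angle from Proposition~\ref{P:handle1} is \emph{strictly} less than $\pi/2$ (hence $\leq\pi/2-\delta$ by compactness of the corner) and then take $\varepsilon<\delta$.
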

	
	\begin{proof}
		We consider the metric $g_1$ on $[0,1]\times M$ constructed in Proposition \ref{P:handle1} and the metric $g_2$ on $[0,1]\times (M\setminus {D^n}^\circ)$ constructed in Proposition \ref{P:handle2}, where the Riemannian manifold $(M,g)$ in Proposition \ref{P:handle2} is $(M\setminus {D^n}^\circ, {g_1}|_{\{1\}\times M\setminus {D^n}^\circ})$. Using Proposition \ref{P:gluing_corner} we glue $([0,1]\times M,g_1)$ to $([0,1]\times (M\setminus {D^n}^\circ),g_2)$ along the faces $\{1\}\times (M\setminus {D^n}^\circ)$ resp.\ $\{0\}\times (M\setminus {D^n}^\circ)$. Then, for $\nu$ and $\varepsilon$ in Proposition \ref{P:handle2} sufficiently small, all required properties of Proposition \ref{P:gluing_corner} are satisfied. 
		
		Moreover, since both metrics $g_1$ and $g_2$ are given by $ds^2+A(s)^2ds_{n-1}^2$ on the face $[0,1]\times S^{n-1}$ with $A''<0$ near the gluing area, it follows from Proposition \ref{P:gluing_corner} that the glued manifold is of the same form.
	\end{proof}

	\section{Core metrics on glued spaces}\label{S:glued_spaces}
	
	In this section we prove the following result, from which the main results will follow.
	
	The general setting we consider is the following: Let $B^q$ be a closed manifold and let $\overline{E}\xrightarrow{\pi} B$ be a linear $D^{p}$-bundle over $B$. Denote by $E=\partial\overline{E}$ the total space of the corresponding linear $S^{p-1}$-bundle. Further, let $N$ be a manifold with boundary diffeomorphic to $E$.
	\begin{theorem}\label{T:NE_core_mtrc}
		Suppose $q\geq 2, p\geq 3$ and that $B$ admits a core metric. Further, suppose that $N$ admits a metric of positive Ricci curvature with convex boundary such that the induced metric on the boundary is of the form $g_r$ for $r<r_0$ as in Proposition \ref{P:Ric_bundles} (with $\check{g}$ the core metric on $B$ and $\hat{g}=ds_{p-1}^2$), thus having positive Ricci curvature. Then the manifold
		\[ N\cup_{E}\overline{E} \]
		admits a core metric.
	\end{theorem}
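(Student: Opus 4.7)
The goal is to equip $N\cup_E\overline{E}$ with a Ricci-positive metric containing an embedded round hemisphere $D^{p+q}$. The natural candidate for this hemisphere is $\overline{E}|_{D^q}$, where $D^q\subseteq B$ is the disc coming from the core metric on $B$: topologically $\overline{E}|_{D^q}\cong D^q\times D^p\cong D^{p+q}$, so the challenge is metric rather than topological. The strategy is to apply the building block of Corollary \ref{C:handle} to $B$, lift the resulting cylinder metric to the disc bundle via the Vilms construction of Subsection \ref{SS:fibre_bdls}, and then interface the two ends with $N$ on one side and a genuine round hemispherical cap on the other.

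Concretely, first apply Corollary \ref{C:handle} to $B$ to produce a Ricci-positive metric $h$ on $[0,1]\times B$ with an angled corner at $\{1\}\times S^{q-1}$ dividing the top boundary into a warped cap face $\{1\}\times D^q$ with metric $ds^2+A(s)^2 ds_{q-1}^2$ of positive sectional curvature, and a warped stem face $\{1\}\times (B\setminus {D^q}^\circ)$ of the form $dt^2+f(t)^2 g'$ all of whose $t$-derivatives vanish at $t=1$. Regarding $[0,1]\times \overline{E}$ as a $D^p$-bundle over $[0,1]\times B$, equip it with the Vilms submersion metric having base $h$, fibre a round $p$-disc of small radius $r_1$, and principal connection pulled back from $\overline{E}\to B$. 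By Proposition \ref{P:Ric_bundles} this lifted metric has positive Ricci curvature once $r_1$ is small enough, and Lemma \ref{L:II_bdl} controls the second fundamental forms along each face. On the cap piece $\{1\}\times \overline{E}|_{D^q}$ the lift is a doubly warped product on $D^q\times D^p$; using Lemma \ref{L:sphere_dbl_warped} together with Propositions \ref{P:deform} and \ref{P:local_flex}, I would isotope this cap, through Ricci-positive metrics with controlled principal curvatures, into a genuine round hemisphere of $S^{p+q}$. The stem piece extends trivially across the remainder of the bundle over $B\setminus {D^q}^\circ$ thanks to the vanishing of the derivatives of $f$ at $t=1$. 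On the opposite end $\{0\}\times \overline{E}$, the induced metric is, after a possible further interpolation via Proposition \ref{P:bdl_warping}, isometric to $g_{r_0}$ with principal curvatures bounded below by $-\lambda$; taking $\lambda$ small, Perelman's gluing (Propositions \ref{P:gluing} and \ref{P:gluing_corner}) attaches this end to $\partial N$ using the convex boundary of $N$ to supply the required positivity of the sum of second fundamental forms. The final corners are smoothed via Proposition \ref{P:gluing_corner}, which preserves the warped-product form on the faces through the smoothing.

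The technical heart of the argument is the cap step: producing an honest round hemisphere embedded inside the lifted bundle metric on $\{1\}\times\overline{E}|_{D^q}$. The lifted metric is only doubly warped, not round, and the path of doubly-warped Ricci-positive metrics guaranteed by Lemma \ref{L:sphere_dbl_warped} must be realized as an actual deformation compatible with the surrounding bundle metric and with the corner data at $\{1\}\times E|_{S^{q-1}}$, without loss of Ricci-positivity or of the second-fundamental-form controls needed for the gluings that follow. The interplay between Proposition \ref{P:bdl_warping} (which transfers principal curvatures between the base and fibre directions and keeps the corner angles bounded) and the deformation Propositions \ref{P:deform} and \ref{P:local_flex} is the delicate element that makes the whole construction work, and arranging all of this simultaneously is where the bulk of the actual proof will reside.
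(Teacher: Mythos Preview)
Your plan has the right ingredients but a structural error in how they are assembled. You lift the building block of Corollary~\ref{C:handle} from $[0,1]\times B$ to the \emph{disc} bundle $[0,1]\times\overline{E}$ and then propose to glue the end $\{0\}\times\overline{E}$ to $\partial N$. But $\partial N=E$ is the \emph{sphere} bundle, so there is no common hypersurface along which to glue; $g_{r_0}$ is a metric on $E$, not on $\overline{E}$. The paper instead lifts to the sphere bundle $[0,1]\times E$ (Subsection~\ref{SS:IE}): this cylinder genuinely slots between $N$ and $\overline{E}$ along their shared boundary $E$, and the corner from Corollary~\ref{C:handle} splits its $t=1$ face into $\{1\}\times E|_{D^q}\cong D^q\times S^{p-1}$ and $\{1\}\times E|_{B\setminus{D^q}^\circ}$. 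The second face is then glued to the sphere-bundle boundary of a \emph{separately constructed} submersion metric on the disc-bundle piece $\overline{E}|_{B\setminus{D^q}^\circ}$ (Subsection~\ref{SS:E-DqDp}); this is the step your ``extends trivially'' elides, and it is not automatic---it needs its own warped $D^p$ fibre so that the remaining free face $S^{q-1}\times D^p$ carries a doubly warped product.

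Two further points. First, after these gluings the remaining boundary is $(D^q\times S^{p-1})\cup_{S^{q-1}\times S^{p-1}}(S^{q-1}\times D^p)\cong S^{p+q-1}$ with a doubly warped product metric; it is this \emph{sphere} to which Lemma~\ref{L:sphere_dbl_warped} and Proposition~\ref{P:deform} apply, producing a cylinder $I\times S^{p+q-1}$ ending at a round sphere, which is then capped. Your proposal to ``isotope the cap $D^q\times D^p$ to a round hemisphere'' using that lemma misapplies it to a disc with corners rather than a closed sphere. Second, Proposition~\ref{P:bdl_warping} is not optional: the value $\lambda$ produced by Corollary~\ref{C:handle} is fixed by the construction (and close to $1$), so the lifted building block has base-direction principal curvatures as negative as $-\lambda$ at $t=0$ while the fibre directions contribute zero (Lemma~\ref{L:II_bdl}). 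The convex boundary of $N$ cannot in general absorb this; the cylinder of Proposition~\ref{P:bdl_warping} is inserted precisely to supply base-direction positivity $\geq\lambda$ on one end while presenting arbitrarily small negativity in fibre directions on the other, which $N$ \emph{can} absorb.
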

	Note that, by Proposition \ref{P:deform}, it suffices in fact to require for the induced metric on $\partial N$ that it lies in the same path component as a submersion metric in the space of Ricci-positive metrics on $E$.
	
	The proof of Theorem \ref{T:NE_core_mtrc} will not work in the case $p=2$, since the construction of the metric $\overline{g}_2$ of Subsection \ref{SS:IE} below, which \textquotedblleft transfers\textquotedblright\ the positivity of the second fundamental form from the fibre to the base, uses Proposition \ref{P:bdl_warping}. However, if we use Proposition \ref{P:bdl_warping_S1} instead, we obtain the following special case:
	
	\begin{theorem}\label{T:BxS2_core_mtrc}
		Suppose that $q\geq 2$ and $B^q$ admits a core metric. Then $B\times S^2$ admits a core metric.
	\end{theorem}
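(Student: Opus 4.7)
The plan is to mirror the strategy used in the proof of Theorem~\ref{T:NE_core_mtrc}, realising $B\times S^2$ as the double disc bundle $(B\times D^2)\cup_{B\times S^1}(B\times D^2)$ and substituting Proposition~\ref{P:bdl_warping_S1} for Proposition~\ref{P:bdl_warping} at the single step where positive principal curvatures are transferred from fibre to base. This is precisely the substitution indicated in the paragraph preceding the theorem statement, and it is self-contained because Proposition~\ref{P:bdl_warping_S1} directly produces a metric on the entire disc bundle $B\times D^2$ with the required boundary behaviour, rather than merely on a cylinder.

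I would first apply Lemma~\ref{L:cone_metric}, after rescaling the given core metric on $B$ so that $\Ric_{\cone{g}}\ge(q-1)\cone{g}$, and then Corollary~\ref{C:handle} to obtain the building block $([0,1]\times B,h)$ carrying a corner of angle strictly less than $\pi/2$ along $\{1\}\times S^{q-1}$ and producing a face $\{1\}\times D^q$ whose induced metric has positive sectional curvature of warped-product form $ds^2+A(s)^2 ds_{q-1}^2$. Next, I would equip two copies of $B\times D^2$ with the metric from Proposition~\ref{P:bdl_warping_S1}, with $\lambda\in(0,1)$ close to $1$, so that the boundary carries $\cone{g}+R^2 ds_1^2$ and has second fundamental form $\ge\lambda\cone{g}$ in $B$-directions and $\ge 0$ in the $S^1$-direction; Proposition~\ref{P:deform_II>=0} then makes the boundary strictly convex. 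After matching boundary metrics via Proposition~\ref{P:deform}, the building block (trivially crossed with $S^1$) is inserted between the two $B\times D^2$ pieces using Propositions~\ref{P:gluing} and~\ref{P:gluing_corner}. The face $\{1\}\times D^q$, combined with the $S^2$-warping coming from the capped $D^2$-factor, then yields a doubly warped region that, by Lemma~\ref{L:sphere_dbl_warped}, can be completed by gluing on an isometric round hemisphere $D^{q+2}$ via Proposition~\ref{P:gluing}.

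The main obstacle is synchronising parameters across all the gluings. On one hand, the lower bound $-\lambda$ on the principal curvatures at $\{0\}\times B$ coming from Corollary~\ref{C:handle} must be controlled by the lower bound $\lambda$ on the second fundamental form from Proposition~\ref{P:bdl_warping_S1} so that each sum of principal curvatures in Proposition~\ref{P:gluing} is strictly positive; on the other, the corner angle inequality of Proposition~\ref{P:gluing_corner} must be satisfied. These two conditions are simultaneously satisfiable because the corner angle from Corollary~\ref{C:handle} is strictly less than $\pi/2$ while the boundary-normal angle coming from the $B\times D^2$ piece is exactly $\pi/2$, so their sum stays strictly below $\pi$, and because the parameter $\lambda$ in Proposition~\ref{P:bdl_warping_S1} can be taken as close to $1$ as desired. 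Hence no genuinely new obstruction appears relative to Theorem~\ref{T:NE_core_mtrc}.
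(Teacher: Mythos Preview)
Your overall strategy is right and matches the paper's, but there is a genuine gap at the step where you invoke Propositions~\ref{P:gluing} and~\ref{P:gluing_corner}. The building block $([0,1]\times B,h)$ from Corollary~\ref{C:handle} has positive Ricci curvature, but once you take the Riemannian product with $(S^1,r^2\,ds_1^2)$ the metric $h+r^2\,ds_1^2$ on $[0,1]\times B\times S^1$ has Ricci curvature \emph{identically zero} in the $S^1$-direction. Hence this piece carries only a metric of non-negative Ricci curvature, and neither Proposition~\ref{P:gluing} nor Proposition~\ref{P:gluing_corner} applies, since both require strictly positive Ricci curvature on each glued piece. The same obstruction blocks your appeal to Proposition~\ref{P:deform}: the boundary metric $\cone{g}+R^2\,ds_1^2$ on $B\times S^1$ is not Ricci-positive, so there is no path of Ricci-positive metrics on $B\times S^1$ to deform within.

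The paper handles this by replacing Proposition~\ref{P:gluing} with Proposition~\ref{P:gluing_Ric>=0} for the gluing of $B\times D^2$ to the building block crossed with $S^1$ (that proposition allows one piece with merely non-negative Ricci curvature, provided the other contains an interior point of strictly positive Ricci curvature), and by deliberately omitting the final Ehrlich deformation inside Proposition~\ref{P:gluing_Ric>=0} so that the corner structure of the building block survives untouched. After the piece $(B\setminus{D^q}^\circ)\times D^2$ is attached (no gluing proposition is needed there because the warping functions have all derivatives vanishing at the interface), the paper invokes Proposition~\ref{P:deform_II>=0_bdry} to upgrade from non-negative to strictly positive Ricci curvature while keeping the boundary metric fixed and its second fundamental form positive; only then does the remainder of the argument from Theorem~\ref{T:NE_core_mtrc} go through. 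A smaller inaccuracy in your outline: you speak of equipping two full copies of $B\times D^2$ with the metric of Proposition~\ref{P:bdl_warping_S1}, but in the decomposition only one full copy appears; the piece attached along the face $\{1\}\times(B\setminus{D^q}^\circ)\times S^1$ is $(B\setminus{D^q}^\circ)\times D^2$ with the simple warped-product metric of Subsection~\ref{SS:E-DqDp}, not the Sha--Yang metric.
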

	
	In fact, Theorem \ref{T:BxS2_core_mtrc} follows from Theorem \ref{T:core_bdl}, which in turn follows from Theorem \ref{T:NE_core_mtrc}, if $q>2$. In the case $q=2$ we have $B=S^2$, so that the only new manifold obtained in Theorem \ref{T:BxS2_core_mtrc} is $S^2\times S^2$. We will nevertheless give the proof for general $B$ as it is essentially the same.
	
	\subsection{Topological decomposition}\label{SS:top_dec}
	
	Let $\overline{\varphi}\colon D^q\times D^p\hookrightarrow \overline{E}$ be a local trivialization covering the embedding $D^q\subseteq B$ of the round hemisphere into $B$. Let $\varphi\colon D^q\times S^{p-1}\hookrightarrow E$ be its restriction to $E$. Then we can decompose $\overline{E}$ into $\overline{E}\setminus\overline{\varphi}(D^q\times D^p)^\circ=\pi^{-1}(B\setminus{D^q}^\circ)$ and $D^q\times D^p$, where the gluing map is given by $\overline{\varphi}|_{S^{q-1}\times D^p}$. Hence, we obtain the following decomposition of the space $N\cup_{E}\overline{E}$:
	
	\[
	\begin{tikzcd}[cells={nodes={draw=black,anchor=center,minimum height=2em}}]
		D^q\times D^p \arrow[dash]{rr}{S^{q-1}\times D^p}\arrow[dash,swap]{dr}{D^q\times S^{p-1}}& & \overline{E}\setminus\overline{\varphi}(D^q\times D^p)^\circ\\
		& N\arrow[dash,swap]{ur}{E\setminus\varphi(D^q\times S^{p-1})^\circ}&
	\end{tikzcd}
	\]
	Here edges stand for a gluing along the indicated parts of the boundaries.
	
	In the following we will define explicit metrics on each part. To glue the parts together, we introduce two \textquotedblleft transition regions\textquotedblright\ as follows:
	
	\[
	\begin{tikzcd}[cells={nodes={draw=black,anchor=center,minimum height=2em}}]
		D^{p+q}\arrow{d}{S^{p+q-1}} & & \\
		I\times S^{p+q-1} \arrow{rr}{S^{q-1}\times D^p}\arrow[swap]{dr}{D^q\times S^{p-1}}& & \overline{E}\setminus\overline{\varphi}(D^q\times D^p)^\circ\arrow{dl}{E\setminus\varphi(D^q\times S^{p-1})^\circ}\\
		&I\times E\arrow{d}{E}&\\
		&N&
	\end{tikzcd}
	\]
	
	Here $I$ denotes a closed interval and the arrows indicate the gluing direction, that is, a segment \[\begin{tikzcd}[cells={nodes={draw=black,anchor=center,minimum height=2em}}]
		M_1\arrow{r}{N}&M_2
	\end{tikzcd}\]
	stands for gluing $M_2$ to $M_1$ along a common boundary component $N$. To preserve positive Ricci curvature along this gluing using Proposition \ref{P:gluing} we will require that the induced metrics on $N\subseteq M_1$ and $N\subseteq M_2$ are the same and the sum of second fundamental forms $\II_{N\subseteq M_1}+\II_{N\subseteq M_2}$ is non-negative. In addition, to obtain a convex boundary when removing the top $D^{p+q-1}$, we will make sure that the second fundamental form on each remaining boundary component of $M_1$ is non-negative.
	
	We will now describe the metric on each piece.
	
	\subsection{The metric on $N$}\label{SS:N}
	
	By the assumptions of Theorem \ref{T:NE_core_mtrc} there exists a metric $g_N$ on $N$ of positive Ricci curvature with convex boundary that is of the form $g_{r}^\theta$ on the boundary as defined in Subsection \ref{SS:fibre_bdls}. Here we assume that the base metric $\check{g}$ on $B$ is a core metric and the fibre metric is the round metric on $S^{p-1}$.
	
	In a first step, for given $\varepsilon_1,\varepsilon_2>0$ we deform the metric $\check{g}$ on $B$ through metrics of positive Ricci curvature to the metric $\cone{g}$ constructed in Lemma \ref{L:cone_metric}. By the expression \eqref{EQ:submersion_metric} we obtain a corresponding deformation of metrics on $E$. Further, it follows from the formulas \eqref{EQ:Ric_bundles} for the Ricci curvatures that for $r$ sufficiently small this deformation is through metrics of positive Ricci curvature.
	
	We can modify the connection $\theta$ used in the construction of the metric $g_r^\theta$ to be the connection induced by the product structure over the hemisphere $D^q\subseteq B$, e.g.\ by taking convex combinations. Again by the formulas \eqref{EQ:Ric_bundles} it follows that for $r$ sufficiently small, this deformation is through metrics of positive Ricci curvature. By observing that decreasing $r$ also preserves the positivity of the Ricci curvature, it follows that we can smoothly deform the metric $g_r^\theta$ which corresponds to the metric $\check{g}$ on $B$, to a metric $g_{r'}^{\theta'}$ for all $r'$ sufficiently small, which corresponds to the metric $\cone{g}$ on $B$ and such that this metric is a product metric on $\varphi(D^q\times S^{p-1})$. By Proposition \ref{P:deform} and and rescaling, it follows that we can attach a cylinder $[0,1]\times E$ to $N$ so that the new boundary is still convex and induces the metric $g_{r'}^{\theta'}$ on the boundary. We denote $r'$ and $\theta'$ again by $r$ and $\theta$, respectively.
	
	Hence, to summarize, for any $r>0$ sufficiently small there exists a metric $\overline{g}_1$ of positive Ricci curvature on $N$ such that the boundary is convex and the induced metric on the boundary is given by the submersion metric $g_{r}^{\theta}$, where the metric considered on $B$ is $\cone{g}$, the metric on $S^{p-1}$ is $ds_{p-1}^2$ and the metric $g_{r}^{\theta}$ is a product metric over $D^q\subseteq B$.

	\subsection{The metric on $I\times E$}\label{SS:IE}
	
	In this subsection, we construct a metric on the cylinder $I\times E$ using the main building block of Corollary \ref{C:handle}. This metric will transition between $\overline{g}_1$ and a metric to which we can glue $\overline{E}\setminus\overline{\varphi}(D^q\times D^p)^\circ$ such that the resulting boundary, which is given by $D^q\times S^{p-1}\cup_{S^{q-1}\times S^{p-1}}S^{q-1}\times D^p\cong S^{p+q-1}$ has positive Ricci curvature and is convex.
	
	We start with the metric $h$ on $[0,1]\times B$ constructed in Corollary \ref{C:handle}. For $r>0$ and the connection $\theta$ of Subsection \ref{SS:N} we consider the metric $h_r^\theta$ on the pull-back of $E\to B$ to $[0,1]\times B$ along the projection $[0,1]\times B\to B$, i.e.\ to the bundle
	\[[0,1]\times E\xrightarrow{\mathrm{id}\times \pi}[0,1]\times B,\]
	where we equip the base $[0,1]\times B$ with the metric $h$ and we consider the pull-back connection of $\theta$. In particular, by the assumption on the connection $\theta$, this metric is a product metric over $[0,1]\times D^q$. For $r>0$ sufficiently small this metric has positive Ricci curvature by Proposition \ref{P:Ric_bundles} and induces the metric $g_{r}^\theta$ on $\{0\}\times E$.
	
	By Lemma \ref{L:II_bdl}, the second fundamental form of this metric is the pull-back of the second fundamental form on the base. Hence, for $r$ sufficiently small, we can glue this piece along $\{0\}\times E$ to the metric on $[0,1]\times E$ constructed in Proposition \ref{P:bdl_warping} along $\{1\}\times E$ (after rescaling by $R^{-1}$) using Proposition \ref{P:gluing}. We denote this metric by $\overline{g}_2$. 
	
	To summarize, for any $\nu>0$ and $r',r>0$ sufficiently small there exists a metric $\overline{g}_2$ of positive Ricci curvature on $[0,1]\times E$ with the following properties:
	\begin{enumerate}
		\item The boundary component $\{0\}\times E$ is, after rescaling, isometric to $g_{r'}^\theta$ and has principal curvatures all at least $-\nu$,
		\item The induced metric on the face $\{1\}\times \pi^{-1}(D^q)\cong\{1\}\times D^q\times S^{p-1}$ is a product metric of the form $ds^2+A(s)^2ds_{q-1}^2+r^2ds_{p-1}^2$ of non-negative Ricci curvature with non-negative second fundamental form,
		\item The metric is of the form $dt^2+f(t)^2\pi^*g'+r^2 \mathrm{pr}_{\mathcal{V}}^*ds_{p-1}^2$ near the face $\{1\}\times \pi^{-1}(B\setminus{D^q}^\circ)$, where $g'$ is a metric of positive Ricci curvature on $\{1\}\times B\setminus{D^q}^\circ$ with positive second fundamental form on the boundary $\{1\}\times S^{q-1}$, and all derivatives of $f$ vanish at $t=1$.
	\end{enumerate}
	
	\subsection{The metric on $\overline{E}\setminus\overline{\varphi}(D^q\times D^p)^\circ$}\label{SS:E-DqDp}
	
	We view the restriction $\overline{E}\setminus\overline{\varphi}(D^q\times D^p)^\circ\xrightarrow{\pi} B\setminus{D^q}^\circ$ as a linear $D^p$-bundle. For $r,R>0$ we consider the metric $g_r'$ on this bundle, where the metric on the base is the metric $R^2g'$ in the preceding subsection, (the restriction of) the connection $\theta$, and the fibre metric on $D^p$ is a warped product metric $\hat{g}=dt^2+h(t)^2ds_{p-1}^2$ for some function $h\colon[0,t_0]\to[0,\infty)$ with $h(0)=0$, $h'(0)=1$, $h$ is odd at $t=0$, $h''<0$, $h(t_0)=1$ and all derivatives of $h$ vanish at $t=t_0$. Then the metric $\hat{g}$ has positive sectional curvature.
	
	By construction the metric $g_r'$ is a product on the face $\pi^{-1}(S^{q-1})$ of the form
	\[{R'}^2ds_{q-1}^2+r^2\hat{g}={R'}^2ds_{q-1}^2+r^2(dt^2+h(t)^2ds_{p-1}^2)=dt^2+{R'}^2ds_{q-1}^2+r^2h(\tfrac{t}{r})^2ds_{p-1}^2,\]
	and it is globally of the form
	\[ g_r'=R^2\pi^*g'+r^2\mathrm{pr}_{\mathcal{V}}^*(dt^2+h(t)^2ds_{p-1}^2)=dt^2+R^2\pi^*g'+r^2h(\tfrac{t}{r})^2\mathrm{pr}_{\mathcal{V}}^*ds_{p-1}^2. \]
	Moreover, by Lemma \ref{L:II_bdl}, the second fundamental form along the face $\pi^{-1}(S^{q-1})$ is non-negative. We set $\overline{g}_3=g_r'$.
	
	To summarize, for any $R>0$ and any $r>0$ sufficiently small there exists a metric $\overline{g}_3$ of positive Ricci curvature on $\overline{E}\setminus\overline{\varphi}(D^q\times D^p)^\circ$ with the following properties:
	\begin{enumerate}
		\item $\overline{g}_3$ is of the form $dt^2+R^2\pi^*g'+r^2h(\tfrac{t}{r})^2\mathrm{pr}_{\mathcal{V}}^*ds_{p-1}^2$, where $h\colon[0,t_0]\to[0,\infty)$ is a smooth function with $h''<0$, $h(t_0)=1$ and vanishing derivatives at $t=t_0$,
		\item The metric on the face $\pi^{-1}(S^{q-1})$ is a doubly warped product metric of the form $dt^2+{R'}^2ds_{q-1}^2+r^2h(\frac{t}{r})^2ds_{p-1}$ with non-negative second fundamental form.
	\end{enumerate}
	
	\subsection{The metric on $I\times S^{p+q-1}$}\label{SS:ISpq}
	
	To transition to the round metric in the last step, we assume that we are given doubly warped product metric on $S^{p+q-1}$, i.e.\ the metric is of the form
	\[ g_{A,B}=ds^2+A(s)^2ds_{q-1}^2+B(s)^2ds_p^2 \]
	for $s\in[0,s_0]$, where $A,B\colon[0,s_0]\to[0,\infty)$ are smooth functions satisfying the smoothness conditions \ref{EQ:dw_boundary_0} and \ref{EQ:dw_boundary_t0} (with $A$ and $B$ replaced by $f$ and $h$, respectively).
	
	We also assume that $A'',B''<0$ on $(0,s_0]$ and $[0,s_0)$, respectively, and $A'''(0),B'''(s_0)<0$, which is equivalent to $g_{A,B}$ having positive sectional curvature by Lemma \ref{L:sphere_dbl_warped}. Then, by Lemma \ref{L:sphere_dbl_warped} in combination with Proposition \ref{P:deform}, we obtain for any $\nu>0$ a metric $\overline{g}_4$ of positive Ricci curvature on $I\times S^{p+q-1}$ satisfying 
	\begin{enumerate}
		\item $\overline{g}_4$ induces the metric $g_{A,B}$ on $\{0\}\times S^{p+q-1}$ and the principal curvatures are all at least $-\nu$,
		\item $\overline{g}_4$ induces a multiple of the round metric on $\{1\}\times S^{p+q-1}$ and the boundary is convex.
	\end{enumerate}
	
	\subsection{Proof of Theorems \ref{T:NE_core_mtrc} and \ref{T:BxS2_core_mtrc}}
	
	By using the decomposition of Subsection \ref{SS:top_dec} and the metrics constructed in Subsections \ref{SS:N}--\ref{SS:ISpq} we can now prove Theorem \ref{T:NE_core_mtrc}.
	
	\begin{proof}[Proof of Theorem \ref{T:NE_core_mtrc}]		
		We start with the metric $\overline{g}_1$ on $N$ constructed in Subsection \ref{SS:N}, i.e.\ the boundary is convex and the metric on the boundary is given by $g_{r}^\theta$ for $r>0$ sufficiently small and the connection is chosen so that the metric is a product over $D^q\subseteq B$.
		
		Using Proposition \ref{P:gluing}, we glue this metric to the metric $\overline{g}_2$ constructed in Subsection \ref{SS:IE} by gluing $E=\partial N$ to $\{0\}\times E\subseteq [0,1]\times E$, where we choose the parameter $\nu$ for the metric $\overline{g}_2$ sufficiently small so that the hypothesis of Proposition \ref{P:gluing} are satisfied.
		
		Now given this metric, we glue it to the metric $\overline{g}_3$ on $\overline{E}\setminus\overline{\varphi}(D^q\times D^p)^\circ$ constructed in Subsection \ref{SS:E-DqDp} by gluing $\{1\}\times E\setminus\varphi(D^q\times S^{p-1})^\circ\subseteq [0,1]\times E$ to $E\setminus\varphi(D^q\times S^{p-1})^\circ\subseteq \overline{E}\setminus\overline{\varphi}(D^q\times D^p)^\circ$. Here we choose the parameter $R$ for the metric $\overline{g}_3$ such that these two metrics coincide on the gluing area. Since the warping functions $f$ and $h$ for the metrics $\overline{g}_2$ and $\overline{g}_3$, respectively, have vanishing derivatives at the boundary, the glued metric is already smooth, so no further smoothing is required.
		
		Hence, we have constructed a metric of positive Ricci curvature on the manifold 
		\[N\cup_{E\setminus\varphi(D^q\times S^{p-1})^\circ}\overline{E}\setminus\overline{\varphi}(D^q\times D^p)^\circ\]
		which has non-negative second fundamental form on the boundary and which is of the form
		\[ ds^2+A(s)^2ds_{q-1}^2+B(s)^2ds_{p-1}^2 \]
		for $s\in[0,s_0]$ and some $s_0>0$ on the boundary $S^{p+q-1}$ with $A'',B''\leq 0$ and $A''<0$ resp.\ $B''<0$ in a neighbourhood of $s=0$ resp.\ $s=s_0$ with $A'''(0),B'''(s_0)<0$. We slightly modify this metric by first applying Proposition \ref{P:deform_II>=0} to obtain a strictly positive second fundamental form on the boundary and by subsequently perturbing the metric slightly preserving positive Ricci curvature and positive second fundamental form such that additionally we have $A'',B''<0$ on $(0,s_0]$ resp.\ $[0,s_0)$, thus obtaining positive sectional curvature on the boundary by Lemma \ref{L:doubly_warped_curv}.
		
		Finally, we glue this metric along the boundary $S^{p+q-1}$ to the metric $\overline{g}_4$ constructed in Subsection \ref{SS:ISpq} along $\{0\}\times S^{p+q-1}$. Here we choose the parameter $\nu$ for the metric $\overline{g}_4$ sufficiently small so that we can apply Proposition \ref{P:gluing} for this gluing. Thus, we obtain a metric of positive Ricci curvature on $(N\cup_{E}\overline{E})\setminus {D^{p+q}}^\circ$ with round and convex boundary. By Lemma \ref{L:core_equ} this implies that $N\cup_{E}\overline{E}$ admits a core metric.
	\end{proof}
	\begin{proof}[Proof of Theorem \ref{T:BxS2_core_mtrc}]
		For the proof of Theorem \ref{T:BxS2_core_mtrc} we also use the decomposition given in Subsection \ref{SS:top_dec}, with the only difference that there is no manifold $N$ and we directly start with a metric on $I\times E=I\times B\times S^1$.
		
		On this part we construct a metric along the same lines as in Subsection \ref{SS:IE} (where the connection is trivial in our case), where we use Proposition \ref{P:bdl_warping_S1} instead of Proposition \ref{P:bdl_warping} applied to the metric $\cone{g}$ on $B$, so that we obtain a metric on $B\times D^2$. We then glue this metric to $[0,1]\times B\times S^1$ equipped with a multiple of the metric $h+r^2ds_1^2$, where $h$ is the metric constructed in Corollary \ref{C:handle} by using Proposition \ref{P:gluing_Ric>=0}. For that we will not perform the final deformation in the proof of Proposition \ref{P:gluing_Ric>=0} so that the metric on $[0,1]\times B\times S^1$ is unchanged outside a neighbourhood of $\{0\}\times B\times S^1$, so that we merely have non-negative Ricci curvature and some points with positive Ricci curvature.
		
		The rest of the proof is entirely similar to the proof of Theorem \ref{T:NE_core_mtrc}. Note that the metric constructed in this way has strictly positive Ricci curvature after applying the deformation of Proposition \ref{P:deform_II>=0_bdry} as in the proof of Theorem \ref{T:NE_core_mtrc}.
	\end{proof}
	
	\subsection{Proof of Theorem \ref{T:core_bdl}}
	
	Finally, we can apply Theorem \ref{T:NE_core_mtrc} to prove Theorem \ref{T:core_bdl}.
	\begin{proof}[Proof of Theorem \ref{T:core_bdl}]
		Let $E\xrightarrow{\pi} B^q$ be a fibre bundle with fibre $F^p$ and structure group $G$ satisfying the hypotheses of Theorem \ref{T:core_bdl}. For $r>0$ we equip $E$ with the metric $g_r^\theta$ with respect to the metrics $\check{g}$ and $\hat{g}$ on $B$ and $F$, respectively, and we choose $\theta$ so that it is induced from the product structure on $\pi^{-1}(D^q)\cong D^q\times F$, so that $g_r^\theta$ is given by $ds_q^2|_{D^q}+r^2\hat{g}$ on this part (where $D^q$ is identified with a hemisphere in $S^q$).
		
		Hence, the metric on $E\setminus \pi^{-1}(D^q)^\circ$ induces the metric $ds_{q-1}^2+r^2\hat{g}$ on the boundary and the boundary has non-negative second fundamental form by Lemma \ref{L:II_bdl}. By Proposition \ref{P:deform_II>=0} we can deform the metric so that the second fundamental form is strictly positive. Then, by Proposition \ref{P:deform} in combination with Proposition \ref{P:gluing}, we can attach a cylinder to the boundary, so that the metric on the new boundary has again positive second fundamental form and the induced metric is, after rescaling, given by $ds_{q-1}^2+\hat{g}'$.
		
		Now we are in the setting of Theorem \ref{T:NE_core_mtrc}, where we set $N$ in Theorem \ref{T:NE_core_mtrc} as $E\setminus \pi^{-1}(D^q)^\circ$ and $\overline{E}$ in Theorem \ref{T:NE_core_mtrc} as the trivial bundle $F\times D^q$ (and note that $q$ in Theorem \ref{T:NE_core_mtrc} is given by $p$ and vice versa). Hence, we obtain a core metric on the glued space
		\[E\setminus \pi^{-1}(D^q)^\circ\cup_{S^{q-1}\times F}(D^q\times F)\cong E.\]
	\end{proof}

	\section{Applications}\label{S:app}
	
	We will discuss three applications of Theorem \ref{T:NE_core_mtrc}: Linear sphere bundles (Subsection \ref{SS:sph_bdls}), projective bundles (Subsection \ref{SS:Proj_bdl}) and the Wu manifold (Subsection \ref{SS:Wu}). We will then show that we can represent various different bordism classes by manifolds of positive Ricci curvature in Subsection \ref{SS:bordism}.

	\subsection{Linear sphere bundles}\label{SS:sph_bdls}
	
	In this subsection we show the following result:
	\begin{theorem}\label{T:sph_bdl}
		Let $E\xrightarrow{\pi} B^q$ be a linear sphere bundle with fibre $S^p$, $p\geq 2$. If $B$ is closed and admits a core metric, then $E$ admits a core metric.
	\end{theorem}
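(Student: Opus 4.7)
The plan is to reduce Theorem \ref{T:sph_bdl} to the general machinery already developed in the paper and treat the small-dimensional cases by hand.

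For $q \geq 3$, Theorem \ref{T:core_bdl} applies directly with $F = S^p$: the round metric on $S^p$ of radius $1$ is $O(p+1)$-invariant (hence invariant under any structure group $G \subseteq O(p+1)$), has positive Ricci curvature, and is itself a core metric for $p \geq 2$ since it contains an isometrically embedded hemisphere. It can play simultaneously the roles of $\hat{g}$ and $\hat{g}'$ in Theorem \ref{T:core_bdl}, so hypothesis (3) is trivially satisfied and we obtain a core metric on $E$.

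For $q = 2$, Bonnet-Myers forces $B$ to be simply-connected, so $B \cong S^2$, and $E$ is the unit sphere bundle of some rank $p+1$ vector bundle $V \to S^2$. Since $p \geq 2$, the Euler class of $V$ lies in $H^{p+1}(S^2) = 0$, so $V$ admits a nowhere-zero section, splits as $V \cong \R \oplus W$ with $W$ of rank $p$, and $E$ decomposes topologically as the double disc bundle $D(W) \cup_{S(W)} D(W)$. When $p \geq 3$ this places us exactly in the setting of Theorem \ref{T:NE_core_mtrc}: take $\overline{E} := D(W)$ (a linear $D^p$-bundle over $B = S^2$ with $p \geq 3$) and $N := D(W)$ (the other copy), so that $N \cup_{E} \overline{E} \cong E$. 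The required metric on $N$—positive Ricci curvature, convex boundary, induced boundary metric of submersion form $g_r$—is produced by the standard doubly warped Vilms ansatz $\pi^*\check{g} + r^2 \mathrm{pr}_{\mathcal{V}}^*(dt^2 + h(t)^2 ds_{p-1}^2)$ for a concave warping function $h$ on $[0, t_0]$ with $h(0) = 0$, $h'(0) = 1$, $h'(t_0) > 0$; shrinking the fibre ($r$ small) yields positive Ricci curvature via formulas of the type in Proposition \ref{P:Ric_bundles}, $h'(t_0) > 0$ produces the convex boundary, and by construction the induced boundary metric is the submersion metric with fibre scaling $r h(t_0)$. Theorem \ref{T:NE_core_mtrc} then delivers the core metric on $E$.

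The only remaining subcase is $p = q = 2$, i.e., $S^2$-bundles over $S^2$, of which there are exactly two. The trivial bundle $S^2 \times S^2$ is covered by Theorem \ref{T:BxS2_core_mtrc} with $B = S^2$. The non-trivial bundle is diffeomorphic to $\C P^2 \# \overline{\C P^2}$ (it is the unique non-spin $S^2$-bundle over $S^2$), and since $\pm \C P^2$ admit core metrics by \ref{EQ:core1} and connected sums of core-metric manifolds admit core metrics by \ref{EQ:core3}, so does $\C P^2 \# \overline{\C P^2}$. The main technical point in the plan is the $p \geq 3$, $q = 2$ step: the warping function $h$ on the fibre disc of $D(W)$ must be chosen so that the Vilms metric simultaneously has positive Ricci curvature, convex boundary, and boundary scaling matching the submersion form $g_r$ demanded by Theorem \ref{T:NE_core_mtrc}. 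This is, however, precisely the balancing accomplished by the constructions in \cite{SY91,Wr07,Re23}.
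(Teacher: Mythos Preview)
Your proposal is correct and follows essentially the same route as the paper: Theorem \ref{T:core_bdl} for $q\geq 3$; for $q=2$ reduce to $B=S^2$, split the underlying vector bundle to write $E$ as a double disc bundle and invoke Theorem \ref{T:NE_core_mtrc} (this is exactly Proposition \ref{P:sph_bdl_section}); and handle $p=q=2$ via Theorem \ref{T:BxS2_core_mtrc} and $\C P^2\#(-\C P^2)$. The only cosmetic differences are that the paper phrases the splitting via the surjection $\pi_1(O(p))\to\pi_1(O(p+1))$ rather than an Euler-class/obstruction argument, and uses the round hemisphere on $D^p$ (totally geodesic boundary, then Proposition \ref{P:deform_II>=0}) instead of your strictly concave $h$ with $h'(t_0)>0$.
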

	
	Theorem \ref{T:sph_bdl} follows from Theorem \ref{T:core_bdl} when $q\geq 3$ by setting $\hat{g}=\hat{g}'=ds_{p}^2$. For the case $q=2$ we establish the following result:
	
	\begin{proposition}\label{P:sph_bdl_section}
		Let $E\xrightarrow{\pi} B^q$ be a linear $S^p$-bundle with $p\geq 3$ such that $B$ is closed and admits a core metric. Suppose $\pi$ admits a section, or, equivalently, the structure group reduces to $O(p)$. Then $E$ admits a core metric.
	\end{proposition}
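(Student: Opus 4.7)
The plan is to realize $E$ as the double of a disc bundle over $B$ and invoke Theorem \ref{T:NE_core_mtrc}. The hypothesis that $\pi$ admits a section is equivalent to the structure group of $E\to B$ reducing from $O(p+1)$ to $O(p)$: concretely, $E$ is the unit sphere bundle of a Euclidean vector bundle of the form $L\oplus \underline{\R}$, where $L\to B$ is a rank-$p$ vector bundle. Decomposing a unit vector $(v,t)\in L\oplus\underline{\R}$ by the sign of the $\underline{\R}$-coordinate exhibits $E$ as the double of the closed unit disc bundle $\overline{L}\to B$, glued along its boundary, the unit sphere bundle $S(L)=\partial\overline{L}$. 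In the notation of Theorem \ref{T:NE_core_mtrc}, the role of both $N$ and $\overline{E}$ will therefore be played by copies of $\overline{L}$.

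The next step is to equip $\overline{L}$ with a metric satisfying the hypotheses of Theorem \ref{T:NE_core_mtrc}. I would fix a principal connection $\theta$ on the $O(p)$-bundle associated with $L$ and equip the fibre $D^p$ with an $O(p)$-invariant warped-product metric $\hat{g}_0=dt^2+\phi(t)^2 ds_{p-1}^2$, $t\in[0,t_0]$, modelled on a geodesic ball in $S^p$ strictly smaller than a hemisphere, in particular with $\phi'(t_0)>0$. The Vilms construction applied to $(B,\check{g})$ (with $\check{g}$ a core metric on $B$), the rescaled fibre $(D^p, r^2\hat{g}_0)$ and the connection $\theta$ yields a submersion metric $G_r$ on $\overline{L}$ whose induced metric on $\partial\overline{L}=S(L)$ is precisely the submersion metric $g_{r\phi(t_0)}^\theta$ of Subsection \ref{SS:fibre_bdls} associated with $\check{g}$, $ds_{p-1}^2$ and $\theta$. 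A pointwise application of Proposition \ref{P:Ric_bundles} shows that for $r$ sufficiently small $G_r$ has positive Ricci curvature on $\overline{L}$ and the scaling satisfies $r\phi(t_0)<r_0$, so the induced boundary metric is also Ricci positive.

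To verify convexity of the boundary I would compute the second fundamental form of $\partial\overline{L}$ using Lemma \ref{L:curv_form}: it vanishes on horizontal vectors and is a positive multiple of the restricted fibre metric on vertical vectors, positive because $\phi'(t_0)>0$, and hence positive semi-definite. Proposition \ref{P:deform_II>=0} then produces a deformation of $G_r$ with strictly positive second fundamental form on $\partial \overline{L}$ and the same induced boundary metric. Applying Theorem \ref{T:NE_core_mtrc} with $N=\overline{L}$ and $\overline{E}=\overline{L}$ (the hypotheses $q\geq 2$, automatic from $B$ admitting a core metric, and $p\geq 3$ are satisfied) then yields a core metric on $\overline{L}\cup_{S(L)}\overline{L}\cong E$.

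The only foreseeable difficulty is the compatibility of connections: the Vilms connection on $\overline{L}$, its restriction to $S(L)$, and the connection implicit in the hypothesis of Theorem \ref{T:NE_core_mtrc} must all arise from the same principal connection $\theta$ on the $O(p)$-bundle of $L$; this is immediate from the construction. A minor technical point is that Proposition \ref{P:Ric_bundles} is stated for closed total spaces, but the formulas \eqref{EQ:Ric_bundles} are pointwise and the small-$r$ argument carries over verbatim to disc bundles over closed bases.
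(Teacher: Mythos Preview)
Your proposal is correct and follows essentially the same route as the paper: realize $E$ as the double of the disc bundle $\overline{L}\to B$, equip $\overline{L}$ with a Vilms submersion metric over the core metric $\check{g}$, and invoke Theorem \ref{T:NE_core_mtrc} with $N=\overline{E}=\overline{L}$. The only cosmetic difference is that the paper takes the fibre metric to be exactly a round hemisphere, so $\partial\overline{L}$ is totally geodesic, whereas you take a slightly smaller geodesic ball to get $\phi'(t_0)>0$ and then appeal to Proposition \ref{P:deform_II>=0} to upgrade the positive semi-definite second fundamental form to strictly positive; either way one arrives at the convex-boundary hypothesis of Theorem \ref{T:NE_core_mtrc}.
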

	\begin{proof}
		Since $\pi$ admits a section, we can decompose it into the union of two linear disc bundles. Let $\underline{E}\to B$ denote one of these disc bundles, i.e.\ we obtain $E$ by gluing $\underline{E}\cup_{\partial\underline{E}} (-\underline{E})$. We equip $\underline{E}$ with a submersion metric $g_r$ with totally geodesic fibres as in Subsection \ref{SS:fibre_bdls}. Here we set $\check{g}$ the core metric on $B$, $\hat{g}$ the induced metric of a round hemisphere on $D^p\subseteq S^p$, and an arbitrary principal connection $\theta$.
		
		By Proposition \ref{P:Ric_bundles}, the metric $g_r$ has positive Ricci curvature for $r$ sufficiently small, and since $g_r$ defines a smooth metric on $\underline{E}\cup_{\partial\underline{E}}(-\underline{E})=E$, the boundary $\partial \underline{E}$ is totally geodesic. Hence, it follows from Theorem \ref{T:NE_core_mtrc} that $E$ admits a core metric, by setting $N$ and $\overline{E}$ in Theorem \ref{T:NE_core_mtrc} both as $\underline{E}$.
	\end{proof}
	
	\begin{proof}[Proof of Theorem \ref{T:sph_bdl}]
		The case $q\geq 3$ directly follows from Theorem \ref{T:core_bdl}. Now suppose $q=2$. Since $B$ is a closed $2$-manifold that admits a core metric, we have $B=S^2$. Thus, isomorphism classes of linear $S^p$-bundles over $B$ are in bijection with elements of $\pi_2(\mathrm{B}O(p+1))\cong\pi_1(O(p+1))$. Since the map $\pi_1(O(p))\to\pi_1(O(p+1))$ induced by the inclusion is an isomorphism for $p\geq 3$, we can apply Proposition \ref{P:sph_bdl_section} to obtain a core metric on $E$, provided $p\geq 3$.
		
		Finally, suppose $p=q=2$. Since $\pi_1(O(3))\cong\Z/2$, there are precisely two isomorphism classes of linear $S^2$-bundles over $S^2$. For the trivial one we obtain a core metric from Theorem \ref{T:BxS2_core_mtrc}. If $\pi$ is the non-trivial bundle, we have a diffeomorphism $E\cong\C P^2\#(-\C P^2)$. By \ref{EQ:core1} and \ref{EQ:core3} this manifold also admits a core metric.
	\end{proof}

	\subsection{Projective bundles}\label{SS:Proj_bdl}
	
	As another application we show that Theorem \ref{T:core_bdl} can be applied to projective bundles. For that, let $\K\in\{\C,\Quat,\mathbb{O}\}$ and consider a projective space $\K P^n$, where $n=2$ if $\K=\mathbb{O}$. These are given as the homogeneous spaces
	\begin{align*}
		\C P^n&\cong \bigslant{U(n+1)}{U(n)\times U(1)},\\
		\Quat P^n&\cong \bigslant{Sp(n+1)}{Sp(n)\times Sp(1)},\\
		\mathbb{O} P^2&\cong\bigslant{F_4}{Spin(9)},
	\end{align*}
	see e.g.\ \cite[Chapter 3]{Be78}. We say that a fibre bundle $E\to B$ with fibre $\K P^n$ is \emph{projective}, if it is homogeneous with respect to these identifications, i.e.\ if the structure group is contained in $U(n+1)$ (resp.\ $Sp(n+1)$, $F_4$) when $\K=\C$ (resp.\ $\K=\Quat$, $\mathbb{O}$).
	
	\begin{theorem}\label{T:proj_bdl}
		Let $E\to B^q$ be a projective bundle with fibre $\K P^n$ and suppose that $B$ is closed and admits a core metric. Then $E$ admits a core metric.
	\end{theorem}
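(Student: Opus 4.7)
The plan is to deduce Theorem~\ref{T:proj_bdl} from Theorem~\ref{T:core_bdl} with fibre $F = \K P^n$. The fibre-dimension hypothesis $p \geq 2$ is automatic since $p \in \{2n, 4n, 16\}$, and the case $q \geq 3$ falls directly under Theorem~\ref{T:core_bdl}; the exceptional case $q = 2$, which forces $B = S^2$, would be handled separately by invoking Theorem~\ref{T:NE_core_mtrc} on a suitable disc-bundle decomposition of the projective bundle (obtained, for instance, by viewing $\K P^n$ as the union of a tubular neighbourhood of a point and a tubular neighbourhood of $\K P^{n-1}$, pulled back along the associated bundle construction). Hypothesis~(1) of Theorem~\ref{T:core_bdl} is given.

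For hypothesis~(2), I take $\hat{g}$ to be the standard symmetric metric on $\K P^n$ --- the Fubini--Study metric for $\C P^n$, its quaternionic analogue for $\Quat P^n$, and the canonical metric on the Cayley plane for $\mathbb{O} P^2$. Each has positive sectional curvature (hence positive Ricci) and full isometry group $G_0 \in \{U(n+1),\ Sp(n+1),\ F_4\}$. By the definition of a projective bundle the structure group $G$ is contained in $G_0$, so $\hat g$ is $G$-invariant, as required.

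The main obstacle is hypothesis~(3): a core metric $\hat g'$ on $\K P^n$ exists by~\ref{EQ:core1}, but one must verify that $\hat g$ and $\hat g'$ lie in the same path component of the space of Ricci-positive metrics on $\K P^n$, and this space is not known to be path-connected in general. I would address this by exploiting the cohomogeneity-one structure of $\K P^n$ under the subgroup $H \in \{U(n),\ Sp(n)\times Sp(1),\ Spin(9)\}$, whose singular orbits are a point and $\K P^{n-1}$. Both $\hat g$ and the core metrics produced in~\cite{Pe97, Bu19} can be realised as $H$-invariant cohomogeneity-one metrics, encoded by warping functions on the transverse geodesic satisfying smoothness conditions at the singular orbits analogous to~\ref{EQ:dw_boundary_0}--\ref{EQ:dw_boundary_t0}. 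A closer inspection of the constructions in~\cite{Pe97, Bu19} shows that the core metric is built from a rescaling of $\hat g$ by a continuous deformation of these warping functions supported in a small tubular neighbourhood of the fixed point, all through the Ricci-positive range; concatenating this deformation with the rescaling yields a path of Ricci-positive metrics joining $\hat g$ and $\hat g'$, verifying~(3). Theorem~\ref{T:core_bdl} then delivers a core metric on $E$.
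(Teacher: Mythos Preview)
Your overall strategy matches the paper's: apply Theorem~\ref{T:core_bdl} for $q\geq 3$ after exhibiting a Ricci-positive path between the symmetric metric and a core metric through cohomogeneity-one metrics, and treat $q=2$ separately. However, two steps in your outline do not go through as written.

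\textbf{The path for hypothesis~(3).} Your claim that the core metric of \cite{Bu19} arises from $\hat g$ by a deformation ``supported in a small tubular neighbourhood of the fixed point'' is not correct. In the cohomogeneity-one parametrisation $g_{f,h}$ used in the paper (with $G\in\{U(n),Sp(n),Spin(9)\}$), the symmetric metric has $h_0(t)=\tfrac{4}{\pi}\sin(\tfrac{\pi}{4}(t+1))$ while the core metric has $h_1(t)=\tfrac{2}{\pi}\cos(\tfrac{\pi}{2}t)$ on $[-1,0]$ and $h_1\equiv\tfrac{2}{\pi}$ on $[0,1]$; these differ on the entire orbit interval, not just near the singular orbit. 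The paper therefore constructs an explicit interpolation $h_s$ and checks Ricci positivity along the whole family by hand, the delicate point being the sign of $\tfrac{f_0^2}{h_s^4}-\tfrac{f_0'h_s'}{f_0 h_s}$. Your sketch does not supply this verification, and no general principle guarantees it.

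\textbf{The case $q=2$.} Your proposed disc-bundle decomposition of $E$ requires the structure group to preserve the splitting $\K P^n = D^{dn}\cup(\K P^n\setminus D^{dn})$, hence to fix a point of $\K P^n$. The groups $G_0\in\{U(n+1),Sp(n+1),F_4\}$ act transitively on $\K P^n$ and fix no point, so the associated-bundle construction does not produce such a decomposition. The paper resolves this by first reducing the structure group over $B=S^2$: since $Sp(n+1)$ and $F_4$ are simply connected, every such bundle over $S^2$ is trivial and $E=S^2\times\K P^n$; for $\K=\C$ the inclusion $U(n)\hookrightarrow U(n+1)$ induces an isomorphism on $\pi_1$, so one reduces to $U(n)$, which does fix a point, and only then invokes the disc-bundle argument (Proposition~\ref{P:proj_bdl_section}). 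This reduction step is missing from your outline.
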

	
	To prove Theorem \ref{T:proj_bdl}, we recall two important metrics on $\K P^n$. First, we have the metric $g_0$ induced from a biinvariant metric on $G_0=U(n+1)$ for $\K=\C$ (resp.\ $G_0=Sp(n+1)$, $F_4$ for $\K=\Quat$, $\mathbb{O}$). By construction, this metric is invariant under the action of $G_0$, and it has positive Ricci curvature (in fact, it is Einstein and has positive sectional curvature), see e.g.\ \cite[Section 7.D]{Be87}.
	
	Second, we have a core metric $g_1$ on $\K P^n$ constructed in \cite{Bu19} using doubly warped submersion metrics. Based on earlier work by Cheeger \cite{Ch73}, an alternative construction using cohomogeneity-one actions, which results in essentially the same metric as in \cite{Bu19}, was given in \cite{RW23}. We will take the latter approach as it will allow us to express the metric $g_0$ more easily in this context. We refer to \cite[Section 6.3]{AB15} for general background on metrics on cohomogeneity-one manifolds.
	
	To obtain an action of cohomogeneity one on $\K P^n$ we restrict the action to $G=U(n)$ for $\K=\C$ (resp.\ $G=Sp(n)$, $Spin(9)$ for $\K=\Quat$, $\mathbb{O}$). Then the quotient $\K P^n/G$ is an interval $I=[-1,1]$ with principal isotropy $H$ over $(-1,1)$ and non-principal isotropy $K_{\pm}$ over $\pm1$ given as in Table \ref{table}, see e.g.\ \cite[Table 1]{BM17}. We note that all inclusions $H\subseteq K_{\pm}\subseteq G$ are the standard inclusion except for $Spin(7)\hookrightarrow Spin(8)$ (see e.g.\ \cite[9.84]{Be87}).
	
	\renewcommand{\arraystretch}{1.5}
	\begin{table}[h!]
		\centering
		\begin{tabular}{|l|l|l|l|l|}
			\hline
			& $G$ & $K_-$ & $K_+$ & $H$\\\hline\hline
			$\C P^n$ & $U(n)$ & $U(n)$ & $U(n-1)U(1)$ & $U(n-1)$\\
			$\Quat P^n$ & $Sp(n)$ & $Sp(n)$ & $Sp(n-1)Sp(1)$ & $Sp(n-1)$\\
			$\mathbb{O}P^2$ & $Spin(9)$ & $Spin(9)$ & $Spin(8)$ & $Spin(7)$\\\hline
		\end{tabular}
		\medskip
		\caption{Cohomogeneity one structure of projective spaces.}
		\label{table}
	\end{table}
	
	To construct a $G$-invariant metric on $\K P^n$ let $L$ be the left-$G$-invariant and right-$K$-invariant metric on $G$ so that the induced metric on $G/H\cong S^{nd-1}$ is the round metric, where $d=\dim_\R\K\in\{2,4,8\}$. Let $\mathfrak{g}=\mathfrak{k}_+\oplus\mathfrak{m}$ and $\mathfrak{k}_+=\mathfrak{h}\oplus\mathfrak{p}$ be $L$-orthogonal decompositions. For smooth functions $f,h\colon I\to[0,\infty)$ we define the metric
	\[ g_{f,h}=dt^2+f(t)^2 L|_{\mathfrak{p}}+h(t)^2 L|_{\mathfrak{m}} \]
	on $I\times G/H$. This metric induces a smooth metric on $\K P^n$, which we will again denote by $g_{f,h}$ if and only if
	\begin{enumerate}
		\item $f$ and $h$ are odd functions at $t=-1$ with $f'(-1)=h'(-1)=1$,
		\item $f$ is an odd function at $t=1$ with $f'(1)=-1$ and $h$ is an even function at $t=1$ with $h(1)>0$,
		\item $f$ and $h$ are strictly positive on $(-1,1)$.
	\end{enumerate}
	This follows for example from \cite[Theorem 1]{Ve99} and \cite[Proposition 2.6]{Bu19}. Moreover, the Ricci curvatures of $g_{f,h}$ are given as follows, see e.g.\ \cite[Lemma 2.5]{Bu19}:
	\begin{align*}
		\Ric(\partial_t,\partial_t)&=-(d-1)\frac{f''}{f}-(n-1)d\frac{h''}{h},\\
		\Ric(\tfrac{V}{f},\tfrac{V}{f})&=-\frac{f''}{f}+(d-2)\frac{1-{f'}^2}{f^2}-(n-1)d\frac{f'h'}{fh}+(n-1)d\frac{f^2}{h^4},\\
		\Ric(\tfrac{X}{h},\tfrac{X}{h})&=-\frac{h''}{h}+((n-1)d-1)\frac{1-{h'}^2}{h^2}-(d-1)\frac{f'h'}{fh}+3(d-1)\frac{1}{h^2}-2(d-1)\frac{f^2}{h^4},\\
		\Ric(\partial_t,X)&=\Ric(\partial_t,V)=\Ric(X,V)=0,
	\end{align*}
	where $V\in \mathfrak{p}$ and $X\in\mathfrak{m}$.
	
	We can now describe the metrics $g_0$ and $g_1$ in this context. Indeed, $g_0$ is given, up to a scalar multiple, as $g_{f_0,h_0}$ for
	\begin{align*}
		f_0(t)&=\frac{4}{\pi}\sin\left(\frac{\pi}{4}(t+1)\right)\cos\left(\frac{\pi}{4}(t+1)\right)=\frac{2}{\pi}\cos\left( \frac{\pi}{2}t \right),\\
		h_0(t)&=\frac{4}{\pi}\sin\left(\frac{\pi}{4}(t+1)\right),
	\end{align*}
	see e.g.\ \cite[Example 6.52]{AB15}. For the metric $g_1$ we set $f_1(t)=\frac{2}{\pi}\cos(t\tfrac{\pi}{2})=f_0(t)$ and we define $h_1$ as a smoothing on $[0,\varepsilon)$ as in \cite[Lemma 3.1]{RW23} of the function 
	\[ \tilde{h}_1(t)=\begin{cases}
		\frac{2}{\pi}\cos(\tfrac{\pi}{2}t),\quad &t\in[-1,0],\\
		\frac{2}{\pi},\quad &t\in[0,1].
	\end{cases} \]
	The smoothing is in such a way that it is $C^1$-close to $\tilde{h}_1$ and the second derivatives of $\tilde{h}_1$ around $0$ lie, up to an arbitrarily small error, in the interval $[\tilde{h}_{1-}''(0),\tilde{h}_{1+}''(0)]$. Since the Ricci curvatures are linear in the second derivatives of $f$ and $h$, we can therefore use the function $\tilde{h}_1$ instead of $h_1$ to calculate them. Indeed, a calculation shows that $g_{f_1,h_1}$ has positive Ricci curvature. Moreover, we have that the metric over $[-1,0]$ is given by
	\[ dt^2+\frac{2}{\pi}\cos^2\left(\frac{\pi}{2}t\right)^2L|_{\mathfrak{m}\oplus\mathfrak{p}}. \]
	Since $L|_{\mathfrak{m}\oplus\mathfrak{p}}$ defines the round metric on $G/H\cong S^{nd-1}$, we obtain, after rescaling, an isometric embedding of a round hemisphere, showing that $g_{f_1,h_1}$ is a core metric.
	
	As in the proof of Theorem \ref{T:sph_bdl}, the proof of Theorem \ref{T:proj_bdl} splits into the two cases $q\geq 3$ and $q=2$. For the second case we prove the following analogous result to Proposition \ref{P:sph_bdl_section}.
	
	\begin{proposition}\label{P:proj_bdl_section}
		Let $E\xrightarrow{\pi}B^q$ be a projective bundle with fibre $\K P^n$ such that $B$ is closed and admits a core metric. Suppose that the structure group of $\pi$ reduces to $G=U(n)$ (resp.\ $G=Sp(n)$, $Spin(9)$). Then $E$ admits a core metric.
	\end{proposition}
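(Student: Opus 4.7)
The approach mirrors the proof of Proposition \ref{P:sph_bdl_section}, with the cohomogeneity one structure of $(\K P^n, g_1)$ playing the role of the reflection symmetry of $S^p$. The subgroup $G$ fixes a distinguished point $p_0 \in \K P^n$ -- for $\K = \C, \Quat$ this is $[0:\cdots:0:1]$, and for $\mathbb{O}P^2 \cong F_4/Spin(9)$ it is the identity coset -- so the reduction of structure group to $G$ yields a section $s\colon B \to E$. Its normal bundle is a rank-$nd$ linear bundle on which $G$ acts via the isotropy representation at $p_0$, so a closed tubular neighborhood $\overline E$ of $s(B)$ is a linear $D^{nd}$-bundle over $B$ with structure group $G \subseteq O(nd)$. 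Setting $N := E \setminus \overline{E}^\circ$, we obtain the decomposition $E = \overline E \cup_{\partial \overline E} N$.

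Next, I would equip $E$ with the submersion metric $g = \pi^*\check g + r^2 \mathrm{pr}_{\mathcal{V}}^* g_1$ using the core metric $\check g$ on $B$, a rescaling by $r^2$ of the $G$-invariant core metric $g_1 = g_{f_1, h_1}$ on $\K P^n$ recalled above, and an arbitrary principal $G$-connection $\theta$. By Proposition \ref{P:Ric_bundles} this has positive Ricci curvature for $r$ sufficiently small, and $\overline E$ can be identified fiberwise with the cohomogeneity one region $t \in [-1, 0]$, so that $\partial \overline E$ corresponds to the slice $\{t = 0\} \times G/H$. The central observation is that each level set $\{t\} \times G/H$ is a $G$-orbit, and horizontal parallel transport of the principal $G$-connection is $G$-equivariant and hence maps orbits to orbits, so the $t$-coordinate is preserved along horizontal curves. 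Consequently, writing $\tau = -rt$ for the $g$-distance to $\partial \overline E$, the metric takes the normal-coordinate form
\[
g = d\tau^2 + \pi^*\check g + r^2 f_1(-\tau/r)^2 \mathrm{pr}_{\mathcal{V}}^* L|_{\mathfrak{p}} + r^2 h_1(-\tau/r)^2 \mathrm{pr}_{\mathcal{V}}^* L|_{\mathfrak{m}},
\]
and the relations $f_1'(0) = h_1'(0) = 0$ built into $g_1$ yield that the $\tau$-derivative of the slice metric vanishes at $\tau = 0$; hence $\partial \overline E$ is totally geodesic in $(E, g)$.

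At $\tau = 0$ the fiber metric reduces to $(2r/\pi)^2 L|_{\mathfrak{p} \oplus \mathfrak{m}}$, which is $(2r/\pi)^2$ times the round metric on $G/H \cong S^{nd-1}$; thus the induced metric on $\partial \overline E$ is the submersion metric $g_{r'}$ on the sphere bundle $\partial \overline E \to B$ with $r' = 2r/\pi$, precisely the boundary data required by Theorem \ref{T:NE_core_mtrc}. The restriction $g|_N$ has positive Ricci curvature and a totally geodesic boundary, so Proposition \ref{P:deform_II>=0} upgrades it to a metric of positive Ricci curvature on $N$ with strictly convex boundary and unchanged boundary metric. The dimension hypothesis $nd \geq 3$ of Theorem \ref{T:NE_core_mtrc} is automatic, since the only cases with $nd < 3$ are $\C P^1$ and $\Quat P^1$, which are spheres already handled by Theorem \ref{T:sph_bdl}. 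Applying Theorem \ref{T:NE_core_mtrc} with this $\overline E$ and $N$ then produces a core metric on $E$. The main obstacle I foresee is justifying the totally geodesic claim: the fiberwise vanishing is immediate from the shape of $g_1$, but extending it to the total space relies crucially on the $G$-equivariance observation that horizontal transport preserves the cohomogeneity one $t$-level and thus that $g$ splits as $d\tau^2 + g_\tau$ globally on a collar of $\partial \overline E$.
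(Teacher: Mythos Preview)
Your proof is correct and follows essentially the same approach as the paper: both use the $G$-invariant core metric $g_1$ to split the bundle along the cohomogeneity-one level $\{t=0\}$ into a linear $D^{nd}$-bundle and its complement with totally geodesic common boundary, then invoke Theorem~\ref{T:NE_core_mtrc}. Your justification of the totally geodesic claim is actually more careful than the paper's (which simply refers back to Proposition~\ref{P:sph_bdl_section}); the only slip is that $\Quat P^1$ has $nd=4\geq 3$, so only $\C P^1$ needs to be handed off to Theorem~\ref{T:sph_bdl}.
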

	\begin{proof}
		If $n=1$ and $\K=\C$, we have $\K P^n=S^2$, so $\pi$ is in fact a linear sphere bundle and we can apply Theorem \ref{T:sph_bdl}. Hence, we can assume that $n>1$ when $\K=\C$, so that the fibre dimension is at least $4$.
		
		Since the core metric $g_1$ is invariant under the structure group, we can use the construction of Subsection \ref{SS:fibre_bdls} to construct a submersion metric $g_r$ on $E$ with $\check{g}$ the core metric on $B$, $\hat{g}$ the core metric $g_1$ and arbitrary principal connection $\theta$. By Proposition \ref{P:Ric_bundles}, the metric $g_r$ has positive Ricci curvature for $r$ sufficiently small.
		
		Since the two parts $g_1|_{[-1,0]}$ on $D^{dn}$ and $g_1|_{[0,1]}$ on $\K P^n\setminus {D^{dn}}^\circ$ are preserved under the action of $G$, we can split the bundle $\pi$ into a linear $D^{dn}$-bundle $E_1$ and a fibre bundle $E_2$ with fibre $\K P^n\setminus{D^{dn}}^\circ$ and structure group $G$. As in the proof of Proposition \ref{P:sph_bdl_section}, the boundary $\partial E_2$, which is a linear sphere bundle over $B$, is totally geodesic in both $E_1$ and $E_2$. Hence, we can apply Theorem \ref{T:NE_core_mtrc} to obtain a core metric on $E$, where we set $N$ and $\overline{E}$ in Theorem \ref{T:NE_core_mtrc} as $E_2$ and $E_1$, respectively.
	\end{proof}
	
	\begin{proof}[Proof of Theorem \ref{T:proj_bdl}]
		First assume $q=2$, so $B=S^2$. Then isomorphism classes of projective bundles over $B$ are in bijection with elements of $\pi_2(\mathrm{B}G_0)\cong\pi_1(G_0)$. If $G_0=F_4$ or $Sp(n+1)$, this group is trivial, showing that $E=S^2\times\K P^n$, which admits a core metric by Theorem \ref{T:sph_bdl}. If $G_0=U(n+1)$, the map $\pi_1(U(n))\to\pi_1(U(n+1))$ induced by the inclusion is an isomorphism, so we can reduce the structure group to $U(n)$. Hence, by Proposition \ref{P:proj_bdl_section}, the manifold $E$ admits a core metric.
		
		Now assume $q\geq 3$. We will now show that the metrics $g_0$ and $g_1$ lie in the same path component of the space of Ricci-positive metrics on $\K P^n$, which implies by Theorem \ref{T:core_bdl} that $E$ admits a core metric.
		
		For $s\in[0,1]$ we define the function $h_s$ as a smoothing on $[\tfrac{1-s}{1+s},\tfrac{1-s}{1+s}+\varepsilon)$ of the function
		\[ \tilde{h}_s(t)= \begin{cases}
			\frac{4}{(s+1)\pi}\sin\left( \frac{\pi}{4}(t+1)(s+1) \right),\quad & t\in[-1,\tfrac{1-s}{1+s}],\\
			\frac{4}{(s+1)\pi},\quad & t\in[\tfrac{1-s}{1+s},1].
		\end{cases} \]
		As for the function $h_1$ we can use the function $\tilde{h}_s$ in the computations below, and we can choose the smoothing so that $h_s$ varies smoothly in $s$ between $h_0$ and $h_1$. We now claim that the metric $g_{f_0,h_s}=g_{f_1,h_s}$ has positive Ricci curvature for all $s$. For that, we first note that an explicit calculation shows that the Ricci curvatures are positive at $t=\pm1$. Further, on $(-1,1)$ we have $h_s''\leq 0$, $h_s'\in[0,1)$ and $f_0''<0$, $f_0'\in[0,1]$, $f_0\in[0,1]$. Hence, the Ricci curvature is positive if we can show that
		\[ \frac{f_0^2}{h_s^4}-\frac{f_0'h_s'}{f_0 h_s}\geq 0. \]
		For $t\in[\tfrac{1-s}{1+t},1]$ this expression is positive. For $t\in[-1,\frac{1-s}{1+s}]$ we set $t_s=\frac{\pi}{4}(t+1)(s+1)$ and obtain
		\begin{align*}
			\frac{f_0(t)^2}{h_s(t)^4}-\frac{f_0'(t)h_s'(t)}{f_0(t) h_s(t)}=\frac{(s+1)\pi^2}{16\sin(t_s)}\left( \cos^2(t_0)\left( (s+1)^3\frac{\sin(t_0)^2}{\sin(t_s)^3}-\frac{\cos(t_s)}{\sin(t_0)\cos(t_0)} \right)+\frac{\sin(t_0)\cos(t_s)}{\cos(t_0)} \right).
		\end{align*}
		By the concavity of the sine function and the monotonicity of the cosine function, we have $\sin(t_s)\leq (1+s)\sin(t_0)$ and $\cos(t_s)\leq \cos(t_0)$. It follows that the innermost bracket is non-negative, so that also the overall expression is non-negative.
	\end{proof}

	\subsection{The Wu manifold}\label{SS:Wu}
	
	In this subsection we consider the Wu manifold $W^5$ \cite{Wu50}, which is the closed, simply-connected $5$-manifold defined as the homogeneous space $W=SU(3)/SO(3)$. It is among one of the \textquotedblleft elementary\textquotedblright\ manifolds in Barden's classification of closed, simply-connected 5-manifolds \cite{Ba65}. The invariants of $W$ are given by $H_2(W)\cong\Z/2$ and $w_2(W)\neq0$ by \cite[Lemma 1.1]{Ba65}. The goal of this section is to prove Theorem \ref{T:Wu}
	
	We will use the following alternative description of $W$: Let $S^2\ttimes D^3$ and $S^2\ttimes S^2$ denote the total space of the unique non-trivial linear $D^3$- and $S^2$-bundle over $S^2$, respectively. Then we have $\partial (S^2\ttimes D^3)=S^2\ttimes S^2$. Further, we have a diffeomorphism $S^2\ttimes S^2\cong \C P^2\#(-\C P^2)$ as we can identify both spaces with the space obtained from $[-1,1]\times S^3$ by collapsing the spheres $\{\pm1\}\times S^3$ along the Hopf fibration $H\colon S^3\to S^2$. Then the pieces $[-1,0]\times S^3$ and $[0,1]\times S^3$ define $\C P^2\setminus{D^4}^\circ$ and $(-\C P^2\setminus{D^4}^\circ)$, respectively, while the projection $H\circ\mathrm{pr}_{S^3}\colon I\times S^3\to S^2$ defines a linear $S^2$-bundle over $S^2$ (cf.\ \cite[Lemma 1]{Wa64}).
	
	Now let $\phi\colon (\C P^2\#(-\C P^2))\to (\C P^2\#(-\C P^2))$ be a diffeomorphism that induced the map $(x,y)\mapsto (x,-y)$ according to the splitting $H_2(\C P^2\#(-\C P^2))\cong H_2(\C P^2)\oplus H_2(\C P^2)\cong\Z^2$. Then we have a diffeomorphism
	\[ W\cong (S^2\ttimes D^3)\cup_\phi(S^2\ttimes D^3), \]
	see \cite[Section 1]{Ba65}.	The diffeomorphism $\phi$ can explicitly be constructed as follows: Let $A\colon[-1,1]\to SO(4)$ be the map
	\[ A_t= \begin{pmatrix}
		1 & 0 & 0 & 0 \\
		0 & -\sin(\tfrac{\pi}{2} t) & 0 & \cos(\tfrac{\pi}{2} t)\\
		0 &  0 & 1 & 0 \\
		0 & -\cos(\tfrac{\pi}{2} t) & 0 & -\sin(\tfrac{\pi}{2} t)
	\end{pmatrix} \]
	so that $A_{-1}=I_4$ and $A_1$ is complex conjugation $c$ in $\C^2\cong\R^4$. We reparametrize $A$ so that it is constant near $t=\pm1$ and define $\phi$ as the diffeomorphism on $I\times S^3$ given by
	\[ \phi(t,v)=(t,A_t(v)). \]
	Since $c$ on $S^3$ is a bundle map of the Hopf fibration, $\phi$ descends to a diffeomorphism of $\C P^2\#(-\C P^2)$. Since $c$ induces a reflection along a circle in $S^2$, the map $\phi$ induces the map $(x,y)\mapsto (x,-y)$ on homology as required.
	
	Now let $f,h\colon[-1,1]\to[0,\infty)$ be smooth functions and consider the metric
	\[ g_{f,h}=dt^2+f(t)^2\mathcal{V}^*ds_1^2+h(t)^2 H^*ds_2^2 \]
	on $I\times S^3$, where $\mathcal{V}$ it the projection in $TS^3$ onto the vertical part of the Riemannian submersion $H\colon (S^3,ds_3^2)\to (S^2,ds_2^2)$. Similarly as in \ref{EQ:dw_boundary_0} and \ref{EQ:dw_boundary_t0} we require the boundary conditions
	\begin{enumerate}
		\item $f$ is odd at $t=-1$ with $f'(-1)=1$ and $h$ is even at $t=-1$ with $h(-1)>0$,
		\item $f$ is odd at $t=1$ with $f'(1)=-1$ and $h$ is even at $t=1$ with $h(1)>0$,
	\end{enumerate}
	and $f$ and $h$ are strictly positive on $(-1,1)\times S^3$. By \cite{Bu19} this is sufficient for $g_{f,h}$ to define a smooth metric on $S^2\ttimes S^2$. Moreover, if we set
	\[ g_f=dt^2+f(t)^2ds_1^2 \]
	on $I\times S^1$, then these boundary conditions ensure that the metric $g_f$ defines a smooth metric on $S^2$ and, if $h$ is constant, we obtain a Riemannian submersion $(S^2\ttimes S^2,g_{f,h})\to (S^2,h(0)^2ds_2^2)$ with fibres isometric to $(S^2,g_f)$. In particular, we obtain such a metric for $f_0(t)=\tfrac{2}{\pi}\cos(\tfrac{\pi}{2} t)$ and $h_0(t)=\frac{2}{\pi}$.
	
	\begin{lemma}\label{L:pb_path}
		The metric $g_{f_0,h_0}$ has positive Ricci curvature and lies in the same path component of the space of Ricci-positive metrics on $S^2\ttimes S^2$ as the metric $\phi^*g_{f_0,h_0}$.
	\end{lemma}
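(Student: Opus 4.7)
For the first claim, I will view $g_{f_0, h_0}$ as a Riemannian submersion $(S^2 \ttimes S^2, g_{f_0, h_0}) \to (S^2, h_0^2 ds_2^2)$ with totally geodesic fibers isometric to the round $S^2$ of radius $\frac{2}{\pi}$; this fits the Vilms setting of Proposition \ref{P:Ric_bundles}, with the fiber metric independent of the base since $h_0$ is constant. Applying Lemma \ref{L:curv_form} to $g_t = f_0(t)^2\mathcal{V}^* ds_1^2 + h_0^2 H^* ds_2^2$ on $S^3$, one obtains $\Ric(\partial_t, \partial_t) = -f_0''/f_0 > 0$ on $(-1,1)$, while the submersion formulas \eqref{EQ:Ric_bundles} yield positive Ricci in the remaining directions: the positive contributions from the Ricci curvatures of the round base and fiber dominate the $A$-tensor correction coming from the Hopf-like bundle, for these specific values of $f_0$ and $h_0$. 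The boundary points $t = \pm 1$ are handled by l'H\^opital's rule using the odd boundary behavior of $f_0$.

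For the second claim, the plan is to construct a one-parameter family $g_s$ of Ricci-positive metrics on $S^2 \ttimes S^2$ interpolating between $g_{f_0, h_0}$ and $\phi^* g_{f_0, h_0}$. I will choose a smooth family $A_t^s \in SO(4)$ with $A_t^0 \equiv I$ and $A_t^1 = A_t$, let $\phi_s(t, v) = (t, A_t^s v)$ on $I \times S^3$, and define $g_s = \phi_s^* g_{f_0, h_0}$ in the bulk region $t \in [-1+\varepsilon, 1-\varepsilon]$; each such $g_s$ has positive Ricci by diffeomorphism invariance. Near $t = \pm 1$, where the Hopf fiber collapses and the pull-back need not descend smoothly to $S^2 \ttimes S^2$, I will replace the metric by a collapsing metric of the kind built in Subsection \ref{SS:IE} and glue the two pieces via Proposition \ref{P:gluing} and Proposition \ref{P:local_flex}. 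This adjustment is controllable because $f_0$ vanishes to first order at $t = \pm 1$, so the metric in the collapse region is already close to fully degenerate in the vertical direction, allowing a small perturbation to restore smooth descent while preserving positive Ricci.

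The principal obstacle is topological: the space of Hopf bundle maps in $O(4)$ is $U(2) \sqcup U(2) \cdot c$, which has two connected components, and the endpoints $I$ and $c$ of the matrix path $A_t$ lie in distinct ones. Hence no continuous family $A_1^s$ of Hopf bundle maps can connect them---this is the differential-topological shadow of the fact that $\phi$ is not isotopic to the identity as a self-diffeomorphism of $S^2 \ttimes S^2$ (it acts as $(x,y) \mapsto (x,-y)$ on $H^2 = \Z^2$). Consequently, a naive pull-back argument cannot yield a family of metrics descending globally, and the boundary modification near $t = \pm 1$---which leverages the fiber collapse to sidestep the component obstruction---is the key technical step.
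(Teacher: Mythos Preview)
Your plan for the second claim has a genuine gap, and the paper's approach differs from yours in a crucial way.

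You localize the difficulty to the collapse region near $t=\pm 1$ and propose to ``replace the metric by a collapsing metric of the kind built in Subsection~\ref{SS:IE} and glue via Propositions~\ref{P:gluing} and~\ref{P:local_flex}''. But none of these tools produces what you need. Subsection~\ref{SS:IE} constructs metrics on cylinders $I\times E$ for an entirely different purpose; it does not give you a Ricci-positive cap on $[1-\varepsilon,1]\times S^3$ (collapsed at $t=1$) whose boundary metric equals $(A_{1-\varepsilon}^s)^*\bigl(f_0(1-\varepsilon)^2\mathcal{V}^*ds_1^2+h_0^2 H^*ds_2^2\bigr)$. Proposition~\ref{P:local_flex} needs two metrics agreeing to first order along a submanifold, and Proposition~\ref{P:gluing} needs isometric boundaries with a sign condition on the second fundamental forms; you have supplied neither side of such a gluing. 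The remark that $f_0$ vanishes to first order at $t=\pm1$ does not sidestep the component obstruction you correctly identified: since $A_1^s$ cannot be a Hopf bundle map for all $s$, the short direction of the Berger metric at $t=1-\varepsilon$ is rotated away from the Hopf fiber that must be collapsed, and you give no mechanism to rotate it back while keeping $\Ric>0$.

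The paper avoids all of this by working in the \emph{middle} rather than at the ends. Since $f_0(0)=h_0(0)$ and $f_0'(0)=h_0'(0)$, one first deforms $h_0$ to a function $h_1$ with $h_1=f_0$ on $[-\varepsilon,\varepsilon]$, through metrics of positive Ricci; on this interval $g_{f_0,h_1}=dt^2+f_0(t)^2 ds_3^2$ is a warped round metric on $S^3$, so every $A_t\in SO(4)$ acts isometrically. Reparametrizing $A$ to live on $[-\varepsilon,\varepsilon]$ and setting $A_t^s=A_{-\varepsilon+s(t+\varepsilon)}$ (reparametrized to be constant in $t$ near $t=\pm\varepsilon$), the pullback $\phi_s^* g_{f_0,h_1}$ agrees with $g_{f_0,h_1}$ near $t=\pm\varepsilon$ because $\phi_s$ is an isometry of the round $S^3$ slices there. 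Hence the metric obtained by using $\phi_s^*g_{f_0,h_1}$ on $[-\varepsilon,\varepsilon]$ and $g_{f_0,h_1}$ elsewhere is smooth and Ricci-positive for all $s$, and there is no issue at $t=\pm1$ since nothing was changed there. The component obstruction is simply irrelevant once the metric in the support of $\phi_s$ is made $SO(4)$-invariant fiberwise.

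A smaller point: for the first claim, Proposition~\ref{P:Ric_bundles} only guarantees positive Ricci for $r$ \emph{sufficiently small}, whereas here the scaling is fixed; you would still need to carry out the explicit check, which the paper does via the doubly warped submersion formulas from Subsection~\ref{SS:Proj_bdl} with $d=n=2$.
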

	\begin{proof}
		The Ricci curvatures of the metric $g_{f,h}$ are given as follows:
		\begin{align*}
			\Ric(\partial_t,\partial_t)&=-\frac{f''}{f}-2\frac{h''}{h},\\
			\Ric(\tfrac{V}{f},\tfrac{V}{f})&=-\frac{f''}{f}-2\frac{f'h'}{fh}+2\frac{f^2}{h^4},\\
			\Ric(\tfrac{X}{h},\tfrac{X}{h})&=-\frac{h''}{h}+\frac{1-{h'}^2}{h^2}-\frac{f'h'}{fh}+3\frac{1}{h^2}-2\frac{f^2}{h^4},\\
			\Ric(\partial_t,X)&=\Ric(\partial_t,V)=\Ric(X,V)=0.
		\end{align*}
		This can for example be obtained from the formulas in Subsection \ref{SS:Proj_bdl} in the case $d=n=2$. A calculation now shows that the metric $g_{f_0,h_0}$ has positive Ricci curvature.
		
		At $t=0$ we have $h_0(t)=f_0(t)$ and $h_0'(t)=f_0'(t)$. Since the Ricci curvatures are linear in the second derivative, replacing $h_0$ by the convex combination $sh_0+(1-s)f_0$ around $t=0$ yields a metric that has positive Ricci curvature at $t=0$ for all $s\in[0,1]$, and therefore also for all $t$ in a neighbourhood of $t=0$. By \cite{BH22} we can smoothly deform $h_0$ into a function $h_1$ that coincides with $f_0$ on $[-\varepsilon,\varepsilon]$ for some $\varepsilon>0$, with $h_0$ outside $(-\varepsilon',\varepsilon')$ for some $\varepsilon'>\varepsilon$ and such that the Ricci curvatures are positive on all of $S^2\ttimes S^2$ along this deformation.

		Hence, the metric $g_{f_0,h_1}$ has positive Ricci curvature and, since $f_0$ and $h_1$ coincide on $[-\varepsilon,\varepsilon]$, it is of the form
		\[ dt^2+f_0(t)^2(\mathcal{V}^*ds_1^2+H^*ds_2^2)=dt^2+f_0(t)^2ds_3^2 \]
		for $t\in[-\varepsilon,\varepsilon]$.
		
		We now reparametrize $A$ to be defined on the interval $[-\varepsilon,\varepsilon]$. The resulting diffeomorphism of $S^2\ttimes S^2$, which we again denote by $\phi$, is then isotopic to $\phi$. Since for each $t$, the map $A_t$ is an isometry of $(S^3,ds_3^2)$, we have that $\phi^*g_{f_0,h_1}$ coincides with $g_{f_0,h_1}$ on each piece where $A_t$ is constant. Thus, we define the homotopy $A_t^s$ for $s\in[0,1]$ by
		\[ A_t^s=A_{-\varepsilon+s(t+\varepsilon)} \]
		and reparametrize it so that $A_t^s$ is constant near $t=\pm\varepsilon$ for all $s\in[0,1]$. Then we have $A_t^0=A_{-\varepsilon}=\mathrm{I}_4$ and $A_t^1=A_t$. We denote the resulting diffeomorphism of $[-\varepsilon,\varepsilon]\times S^3$ by $\phi_s$ (and note that $\phi_s$ does need not extend to all of $S^2\ttimes S^2$ when $s\in(0,1)$). Now define the metric $g_s$ on $S^2\ttimes S^2$ by
		\[ g_s=\begin{cases}
			\phi_s^*g_{f_0,h_1},\quad & t\in[-\varepsilon,\varepsilon],\\
			g_{f_0,h_1},\quad & \text{else.}
		\end{cases} \]
		Since $A_t^s$ is constant near $t=\pm\varepsilon$, the metric $g_s$ is smooth. Further, since it is the pull-back of a metric of positive Ricci curvature, it has positive Ricci curvature for all $s\in[0,1]$. Thus, we have a smooth deformation between $g_{f_0,h_1}$ and $\phi^*g_{f_0,h_1}$ through metrics of positive Ricci curvature. Finally, we can now deform $\phi^*g_{f_0,h_1}$ into $\phi^*g_{f_0,h_0}$ to finish the proof.
	\end{proof}
	
	\begin{proof}[Proof of Theorem \ref{T:Wu}]
		Since $h_0$ is constant, we have a Riemannian submersion $(S^2\ttimes S^2,g_{f_0,h_0})\to(S^2,h(0)^2ds_2^2)$ with fibres isometric to $(S^2,g_{f_0})=(S^2,\frac{4}{\pi^2}ds_2^2)$. Let $\theta$ denote the principal $SO(3)$-connection of this submersion and let $g_r=g_r^\theta$ be the submersion metric as in Subsection \ref{SS:fibre_bdls} with $\check{g}=h(0)^2ds_2^2$, $\hat{g}$ the induced metric on a hemispheres $D^3\subseteq S^3$ in the round sphere of radius $\frac{2}{\pi}$ and principal connection $\theta$. Then the restriction of $g_1$ to the boundary is given by $g_{f_0,h_0}$
		
		By Proposition \ref{P:Ric_bundles}, the metric $g_r$ has positive Ricci curvature for $r$ sufficiently small. Further, the path $r\mapsto g_r|_{S^2\ttimes S^2}=r^2g_{f_0,\frac{1}{r}h_0}$ has positive Ricci curvature for all $r\in[0,1]$ by the formulas in the proof of Lemma \ref{L:pb_path}. Hence, by Propositions \ref{P:deform}, \ref{P:deform_II>=0} and \ref{P:gluing}, we can glue a cylinder to the boundary of $S^2\ttimes D^3$ and obtain a metric of positive Ricci curvature on this glued space with convex boundary and such that the induced metric on the boundary is given by $g_{f_0,h_0}$.
		
		By Propositions \ref{P:deform} and \ref{P:gluing} and Lemma \ref{L:pb_path}, we can attach another cylinder so that the induced metric on the boundary is given by $\phi^*g_{f_0,h_0}$. Hence, we can apply Theorem \ref{T:NE_core_mtrc} with $N=S^2\ttimes D^3$ and $E=S^2\ttimes S^2$, where the identification of $E$ with $\partial N$ is given via $\phi$, to obtain a core metric on $W$.
	\end{proof}
	
	\begin{remark}\label{R:Wm}
		If we replace $S^2\ttimes D^3$ by the linear $D^3$-bundle $\overline{E}\to\C P^{2i-1}$ obtained from the generalized Hopf fibration $S^{4i-1}\to \C P^{2i-1}$ by the inclusion $SO(2)\hookrightarrow SO(4)$, analogous arguments as in the proof of Theorem \ref{T:Wu} yield a core metric on a closed, simply-connected non-spin manifold $W_i^{4i+1}$ with second homology $H_2(W_i)\cong\Z/2$ (see Lemma \ref{L:Wi_cohom} below). In this case we have a diffeomorphism $\partial\overline{E}\cong\C P^{2i}\#(-\C P^{2i})$ and the path $A_t$ similarly connects the identity and complex conjugation on $S^{4i-1}$. The metric $g_{f,h}$ is then given by scaling the base and fibres of the generalized Hopf fibration $(S^{4i-1},ds_{4i-1}^2)\to(\C P^{2i-1},g_{FS})$, where $g_{FS}$ is the Fubini--Study metric on $\C P^{2i-1}$.
	\end{remark}

	\subsection{Oriented bordism}\label{SS:bordism}
	
	In this subsection we prove Corollary \ref{C:bordism}. We first establish the existence of a suitable generating set for $\bigslant{\Omega_*^{SO}}{Tors}$.
	
	\begin{proposition}\label{P:bordsm_generators}
		The ring $\bigslant{\Omega_*^{SO}}{Tors}$ is generated by a sequence $\{M_j^{4j}\}$, where each manifold $M_j$ is a complex projective space or the total space of a projective bundle over a complex projective space.
	\end{proposition}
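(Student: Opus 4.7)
The plan is to invoke the classical characterization of polynomial generators of $\bigslant{\Omega_*^{SO}}{Tors}$ via the Newton Pontryagin numbers and then to exhibit each generator in the required form by case analysis.

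By Thom and Wall, $\bigslant{\Omega_*^{SO}}{Tors}$ is a polynomial $\Z$-algebra with one indeterminate in each degree $4j$, $j \geq 1$. By Milnor's classical criterion (see for example Stong, \emph{Notes on Cobordism Theory}, Ch.~VII), a class $[M^{4j}]$ represents such a polynomial indeterminate if and only if the Newton Pontryagin characteristic number
\[ s_j(M) \;=\; \bigl\langle s_j(p_1(M),\dots,p_j(M)),\,[M]\bigr\rangle \]
satisfies $|s_j(M)| = \mu_j$, where $\mu_j = p$ when $2j+1 = p^k$ is a prime power and $\mu_j = 1$ otherwise.

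When $2j+1$ is prime, I take $M_j := \C P^{2j}$; the identity $T\C P^{2j} \oplus \underline{\C} \cong (2j+1)\gamma^*$ gives $s_j(\C P^{2j}) = 2j+1 = \mu_j$, so $\C P^{2j}$ is a polynomial generator. When $2j+1$ is not prime, I take $M_j$ to be a twisted projective bundle
\[ M_j \;:=\; P\bigl(\gamma^{\otimes a_0} \oplus \cdots \oplus \gamma^{\otimes a_k}\bigr) \;\longrightarrow\; \C P^r \]
with $r + k = 2j$ and integer exponents $a_i$, where $\gamma$ is the tautological line bundle on $\C P^r$. Via the relative tangent sequence of the projection and the Leray--Hirsch decomposition of $H^*(M_j)$, the number $s_j(M_j)$ is expressible as an explicit integer polynomial in $r$, $k$ and the $a_i$. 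A classical theorem of Milnor, equivalent to the fact that the complex cobordism ring $\Omega_*^U$ admits a polynomial $\Z$-basis formed by complex projective spaces and projective bundles over them, implies that as $(r,k,a_0,\dots,a_k)$ ranges over the admissible parameters the values of $s_j(M_j)$ sweep out the full ideal $\mu_j\Z \subset \Z$; in particular a choice realizing $|s_j(M_j)| = \mu_j$ exists, yielding the required generator.

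The main obstacle is the last step, namely verifying that the twisted projective bundles above actually realize $|s_j| = \mu_j$ in every non-prime-power degree. This amounts to a Vandermonde-type analysis of the leading coefficients of the $s_j$-polynomial in the exponents $a_i$, combined with Kummer's theorem on the $p$-adic valuations of binomial coefficients; both ingredients are classical and implicit in the proofs of the structure theorems for $\Omega_*^U$ and $\bigslant{\Omega_*^{SO}}{Tors}$, so the remaining work is essentially to package these into a single explicit family of projective bundles.
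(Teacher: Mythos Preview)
Your approach is sound in principle but takes a considerably harder road than the paper, and the step you yourself flag as the ``main obstacle'' is left only sketched. The paper bypasses that computation entirely: it simply invokes Milnor's classical result (with references to Milnor, Stong, Thom, and Gromov--Lawson) that the degree-$1$ hypersurfaces
\[
H_{k,\ell}=\bigl\{((u_0:\dots:u_k),(v_0:\dots:v_\ell))\in\C P^k\times\C P^\ell \;\bigm|\; u_0v_0+\dots+u_kv_k=0\bigr\}
\]
already generate $\bigslant{\Omega_*^{SO}}{Tors}$, and then makes the two-line observation that the projection onto the first factor exhibits each $H_{k,\ell}$ as a $\C P^{\ell-1}$-bundle over $\C P^k$: it is the projectivization of the rank-$\ell$ subbundle $E\subset\underline{\C^{\ell+1}}$ over $\C P^k$ cut out by the same linear equation, so the structure group sits in $U(\ell)$. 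No $s_j$-numbers, no Kummer-type divisibility, no Vandermonde analysis is needed here --- all of that is absorbed into the cited theorem.

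Your family $P(\gamma^{a_0}\oplus\dots\oplus\gamma^{a_k})\to\C P^r$ is closely related (the Milnor hypersurfaces are themselves projectivizations of a tautological-type subbundle), so you are not far off; but to complete your argument you would have to reprove the hard part of Milnor's theorem rather than quote it. If you want to keep your framing, the cleanest fix is to replace your ad hoc family by the $H_{k,\ell}$ themselves, cite the generation statement directly, and then give the short projective-bundle identification as the paper does.
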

	\begin{proof}
		It was shown by Milnor \cite{Mi60}, see also \cite[Chapters 7 and 9]{St68}, \cite[Theorems 2 and 3]{Th95} and \cite[Proof of Corollary C]{GL80a}, that $\bigslant{\Omega_*^{SO}}{Tors}$ is generated by a sequence of manifolds $M_j^{4j}$, where each $M_j$ is a hypersurface of degree 1 in a product $\C P^k\times\C P^\ell$ with $k\leq \ell$, i.e.\ $M_j$ is the submanifold
		\[ M_j=\{ ((u_0:\dots:u_k),(v_0:\dots:v_\ell))\in\C P^k\times\C P^\ell\mid u_0v_0+\dots+u_kv_k=0 \}. \]
		The projection onto the first factor gives $M_j$ the structure of a fibre bundle over $\C P^k$ with fibre $\C P^{\ell-1}$. To see that this bundle has structure group contained in $U(\ell)$, note that it is the projective bundle corresponding to the complex vector bundle $E\to\C P^k$, where
		\[ E=\{ ((u_0:\dots:u_k),(v_0,\dots,v_\ell))\in\C P^k\times\C^{\ell+1}\mid u_0v_0+\dots+u_kv_k=0 \}. \]
	\end{proof}
%
	\begin{proof}[Proof of Corollary \ref{C:bordism}]
		By Proposition \ref{P:bordsm_generators}, the ring $\bigslant{\Omega_*^{SO}}{Tors}$ has a generating set consisting of manifolds that all admit a core metric by Theorem \ref{T:sph_proj_bdl}. Hence, by taking products and connected sums using Theorem \ref{T:core_bdl} and \ref{EQ:core3}, we can represent any class in $\bigslant{\Omega_*^{SO}}{Tors}$ by a manifold admitting a core metric, in particular, a metric of positive Ricci curvature.
	\end{proof}
	
	It remains open whether the same result as in Corollary \ref{C:bordism} holds for the full oriented bordism ring $\Omega_*^{SO}$. However, we have the following result in low dimensions.	
	\begin{proposition}\label{P:bordism_tors}
		Every class in $\Omega_n^{SO}$ with $n\leq 12$ is represented by a connected manifold admitting a Riemannian metric of positive Ricci curvature.
	\end{proposition}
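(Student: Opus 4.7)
The plan is to extend Corollary \ref{C:bordism} from the torsion-free quotient $\bigslant{\Omega_*^{SO}}{Tors}$ to the full ring $\Omega_n^{SO}$ for $n \leq 12$ by exhibiting core-metric representatives of all torsion generators and then combining them with a torsion-free representative via connected sum.

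First, I would invoke the classical low-dimensional computation of oriented bordism (Wall, Milnor, Stong): for $n \leq 12$, the torsion subgroup $T_n \subseteq \Omega_n^{SO}$ is non-trivial only for $n \in \{5,9,10,11\}$, with $T_5 \cong \Z/2$, $T_9 \cong (\Z/2)^2$, $T_{10} \cong \Z/2$, and $T_{11} \cong \Z/2$. Since each finitely generated abelian group splits as torsion plus torsion-free, every class $[X] \in \Omega_n^{SO}$ can be written as $[M_0] + [M_1] + \dots + [M_k]$ where $M_0$ is a lift to $\Omega_n^{SO}$ of a class in $\Omega_n^{SO}/T_n$ and each $M_j$ with $j \geq 1$ is a torsion generator. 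Because connected sum realizes addition in the oriented bordism ring, and because Theorem \ref{T:conn_sum} shows that the connected sum of core-metric manifolds admits a Riemannian metric of positive Ricci curvature, the proposition reduces to the following two sub-claims: the class $[M_0]$ is represented by a core-metric manifold (which is immediate from the proof of Corollary \ref{C:bordism}), and every generator of $T_n$ for $n \in \{5,9,10,11\}$ admits a core metric.

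Second, I would produce core-metric torsion generators in each of these four dimensions. For $n = 5$, the Wu manifold $W$ generates $T_5$ and carries a core metric by Theorem \ref{T:Wu}. For $n = 9$, I would take $W \times \C P^2$ and the generalized Wu manifold $W_2$ of Remark \ref{R:Wm} as the two $\Z/2$-generators of $T_9$: the former admits a core metric by applying Theorem \ref{T:core_bdl} to the trivial bundle $\C P^2 \times W$ (using the core metric on $W$ as both $\hat{g}$ and $\hat{g}'$, with trivial structure group, so that hypotheses (2) and (3) hold automatically), while the latter is the content of Remark \ref{R:Wm}. For $n = 10$, I would take $W \times W$, again obtaining a core metric from Theorem \ref{T:core_bdl} applied to a trivial bundle. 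For $n = 11$, since $\Omega_k^{SO} = 0$ for $k \in \{1,2,3,6,7\}$, no generator of $T_{11}$ is a product of lower-dimensional manifolds; I would therefore represent it by a primitive construction, such as a non-trivial $\C P^3$-bundle over $W$ or a non-trivial linear $S^2$-bundle over $W_2$, whose total space admits a core metric by Theorem \ref{T:sph_proj_bdl}, and whose non-triviality in $\Omega_{11}^{SO}$ I would verify by computing a Stiefel--Whitney characteristic number using the Künneth formula together with the known Stiefel--Whitney classes of $W$, $W_2$, and projective spaces.

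The main obstacle is the dimension-eleven case, where one must exhibit a ``primitive'' torsion generator that simultaneously admits a core metric via the theorems of Section \ref{S:app} and has non-vanishing Stiefel--Whitney number. The other three dimensions are handled by direct product constructions, so all the work lies in selecting an appropriate bundle in dimension eleven and performing the characteristic-number calculation. Once that step is completed, the scheme of the first paragraph assembles a connected representative of every class in $\Omega_n^{SO}$, $n \leq 12$, carrying a Riemannian metric of positive Ricci curvature, as required.
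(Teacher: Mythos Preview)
Your overall scheme matches the paper's: split into torsion-free plus torsion, invoke Corollary~\ref{C:bordism} for the former, and exhibit Ricci-positive representatives of the torsion generators in dimensions $5$, $9$, $10$, $11$. Your treatment of dimensions $5$, $9$, and $10$ (using $W_1$, $W_1\times\C P^2$ together with $W_2$, and $W_1\times W_1$) is exactly what the paper does.

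The difference, and the gap, is dimension $11$. The paper does not attempt to produce a core-metric generator there; instead it uses the Dold manifold $P(3,4)=(S^3\times\C P^4)/(\Z/2)$, which is a classical generator of $\Omega_{11}^{SO}\cong\Z/2$ (Wall). This manifold is not simply-connected and hence cannot carry a core metric, but the $\Z/2$-action is by isometries of the product of round and Fubini--Study metrics, so the quotient inherits positive Ricci curvature directly. Since $\Omega_{11}^{SO}$ has only two classes and no torsion-free part, no connected sum is needed in this dimension, so demanding a core metric is an unnecessary strengthening.

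Your proposed alternative---a non-trivial $\C P^3$-bundle over $W$ or $S^2$-bundle over $W_2$---would indeed carry a core metric by Theorem~\ref{T:sph_proj_bdl}, but you have not shown that any such bundle represents the non-trivial bordism class; the trivial bundles are null-bordant, and verifying a non-zero Stiefel--Whitney number for a specific twisted bundle requires computing the cohomology and characteristic classes of the total space, which you leave unaddressed. The paper's route via $P(3,4)$ avoids this computation entirely by citing a known generator.
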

	To prove Proposition \ref{P:bordism_tors} we will use the manifolds $W_i$ introduced in Remark \ref{R:Wm}. For that we first prove the following lemma.	
	\begin{lemma}\label{L:Wi_cohom}
		The cohomology groups of the manifold $W_i$ with coefficients in $\Z/2$ are given by
		\[ H^j(W_i;\Z/2)\cong\begin{cases}
			\Z/2,\quad & 2\leq j\leq 4i-1\text{ or }j=0,4i+1,\\
			0,\quad & \text{else}.
		\end{cases} \]
		If $a\in H^2(W_i;\Z/2)\cong\Z/2$ denotes a generator, then $a^j$ is non-trivial for all $j\in\{1,\dots,2i-1\}$. Further, the Stiefel--Whitney classes of $W_i$ are given as follows:
		\[ w_{2j}(W_i)={2i+1\choose j}a^j,\quad w_{2j+1}(W_i)=j{2i+1\choose j}(a^{2i-j})^*,\]
		where $(a^{2i-j})^*\in H^{2j+1}(W_i;\Z/2)$ is the Poincaré dual of $a^{2i-j}$.
	\end{lemma}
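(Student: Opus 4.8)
The plan is to exploit the double-disc-bundle description $W_i\cong\overline{E}\cup_\phi\overline{E}$. Write $B=\C P^{2i-1}$ and let $h\in H^2(B;\Z/2)$, $h_\Z\in H^2(B;\Z)$ be the (mod $2$) hyperplane classes, and let $\xi\to B$ be the rank-$3$ vector bundle with $\overline{E}=D\xi$; it admits a nowhere-zero section $\sigma$ and has total class $w(\xi)=1+h$ (e.g.\ $\xi=\underline{\R}\oplus L$, $L$ the tautological line bundle). Then $\partial\overline{E}=S\xi\cong\C P^{2i}\#(-\C P^{2i})$, and since the Euler classes $w_3(\xi)$ and $e(\xi)$ vanish, the Gysin sequences of $S\xi\to B$ split, giving $H^*(S\xi;\Z/2)\cong H^*(B;\Z/2)\{1,u\}$ and $H^*(S\xi;\Z)\cong H^*(B;\Z)\{1,u\}$ as $H^*(B)$-modules ($\deg u=2$), with $\pi^*$ the inclusion of the first summand, split by $\sigma^*$. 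The two inclusions of $S\xi$ into the two copies of $\overline{E}$ induce $\pi^*$ and $\phi^*\pi^*$ on cohomology. I will compute $H^*(W_i;\Z/2)$ by Mayer--Vietoris, read off $a$ and its powers, compute the even $w_{2j}$ by restriction to one copy of $\overline{E}$, and deduce the odd $w_{2j+1}$ from the Wu formula.

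The key point for the mod-$2$ computation is that $\phi^*=\mathrm{id}$ on $H^*(S\xi;\Z/2)$: since $\phi$ acts on $H_2(S\xi;\Z)$ by $(x,y)\mapsto(x,-y)$ it acts trivially on $H_2(S\xi;\Z/2)$, hence on $H^2(S\xi;\Z/2)$, and $H^*(S\xi;\Z/2)$ is generated as an algebra in degree $2$, so the ring automorphism $\phi^*$ is the identity. Thus the Mayer--Vietoris maps become $\psi_j\colon H^j(B;\Z/2)^{\oplus2}\to H^j(S\xi;\Z/2)$, $(\alpha,\beta)\mapsto\pi^*(\alpha+\beta)$; as $\pi^*$ is a split injection, $\ker\psi_j\cong H^j(B;\Z/2)$ and $\operatorname{coker}\psi_j\cong H^{j-2}(B;\Z/2)$, so Mayer--Vietoris yields short exact sequences $0\to H^{j-3}(B;\Z/2)\to H^j(W_i;\Z/2)\to H^j(B;\Z/2)\to0$. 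The two outer groups are supported in disjoint (odd vs.\ even) degrees, so these split dimension-wise and give precisely the asserted groups. From the sequence in degree $2$ one sees that $a\in H^2(W_i;\Z/2)$ restricts to the generator $h$ of $H^2(\overline{E};\Z/2)=H^2(B;\Z/2)$, hence $a^j$ restricts to $h^j$ and the restriction $H^{2j}(W_i;\Z/2)\to H^{2j}(\overline{E};\Z/2)$ is an isomorphism for $0\le j\le 2i-1$; in particular $a^j\ne0$ for $1\le j\le 2i-1$.

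For the Stiefel--Whitney classes, restricting to one copy of $\overline{E}$ gives $TW_i|_{\overline{E}}=T\overline{E}=\pi^*(TB\oplus\xi)$, so $w(TW_i)|_{\overline{E}}=\pi^*\!\big((1+h)^{2i}(1+h)\big)=\pi^*\!\big((1+h)^{2i+1}\big)$; since this restriction is injective in even degrees, $w_{2j}(W_i)=\binom{2i+1}{j}a^j$ (for $j\ge2i$ both sides vanish, as $H^{2j}(W_i;\Z/2)=0$). For the odd classes I use Wu's formula $\mathrm{Sq}^1 w_{2j}=w_1 w_{2j}+(2j-1)w_{2j+1}$ together with $w_1(W_i)\in H^1(W_i;\Z/2)=0$ to get $w_{2j+1}(W_i)=\mathrm{Sq}^1 w_{2j}(W_i)=\binom{2i+1}{j}\mathrm{Sq}^1(a^j)=j\binom{2i+1}{j}\,a^{j-1}\mathrm{Sq}^1(a)$, the last step because $\mathrm{Sq}^1$ is a derivation. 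It remains to identify $a^{j-1}\mathrm{Sq}^1(a)$ with the Poincaré dual of $a^{2i-j}$: granting $\mathrm{Sq}^1(a)\ne0$, this class generates $H^3(W_i;\Z/2)$, $a^{2i-1}$ generates $H^{4i-2}(W_i;\Z/2)$, and mod-$2$ Poincaré duality makes the cup pairing $H^{4i-2}\otimes H^3\to H^{4i+1}\cong\Z/2$ perfect, so $a^{2i-1}\mathrm{Sq}^1(a)\ne0$; since $a^{2i-j}$ generates $H^{4i-2j}(W_i;\Z/2)$, multiplying $a^{j-1}\mathrm{Sq}^1(a)$ by $a^{2i-j}$ gives $a^{2i-1}\mathrm{Sq}^1(a)\ne0$, which says exactly that $a^{j-1}\mathrm{Sq}^1(a)=(a^{2i-j})^*$.

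The main obstacle is therefore $\mathrm{Sq}^1(a)\ne0$ — the one point where the twisting $\phi$ really enters, as mod $2$ everything looked untwisted. I will establish it by rerunning Mayer--Vietoris with $\Z$-coefficients: now $\phi^*$ acts on $H^2(S\xi;\Z)\cong\Z^2$ by $(x,y)\mapsto(x,-y)$, and $\pi^*h_\Z$ is primitive (being $\sigma^*$-split) with $(\pi^*h_\Z)^{2i}=\pi^*(h_\Z^{2i})=0$, which in terms of the connected-sum generators $e_1,e_2$ of $H^2(S\xi;\Z)$ (with $e_1e_2=0$, $e_1^{2i}=-e_2^{2i}$) forces $\pi^*h_\Z=\pm e_1\pm e_2$; hence $\pi^*h_\Z$ and $\phi^*\pi^*h_\Z$ are $e_1+e_2$ and $e_1-e_2$ up to sign, so they are $\Q$-independent and span an index-$2$ subgroup of $H^2(S\xi;\Z)$. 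Consequently $H^2(W_i;\Z)=\ker\psi_2^\Z=0$ and $H^3(W_i;\Z)=\operatorname{coker}\psi_2^\Z\cong\Z/2$, and the Bockstein exact sequences of $0\to\Z\xrightarrow{2}\Z\to\Z/2\to0$ then force $\mathrm{Sq}^1\colon H^2(W_i;\Z/2)\to H^3(W_i;\Z/2)$ to be injective, so $\mathrm{Sq}^1(a)\ne0$. (Incidentally $H^1(W_i;\Z)=H^2(W_i;\Z)=0$ together with simple-connectedness give $H_2(W_i;\Z)\cong\Z/2$ by universal coefficients, as claimed in Remark~\ref{R:Wm}.)
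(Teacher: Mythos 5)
Your proof is correct and reaches all the stated conclusions, using the same overall Mayer--Vietoris strategy as the paper but with two genuine simplifications worth noting. First, in the mod-$2$ computation you observe that $\phi^*$ is the identity on $H^*(S\xi;\Z/2)$ because it is the identity in degree $2$ and the ring is generated there; the paper instead works with explicit connected-sum generators $b_1,b_2$ and tracks $i_E^*(\bar b_j)=b_1\pm b_2$, reaching the same untwisted picture mod~$2$ by hand. Second, and more substantively, for the odd Stiefel--Whitney classes the paper determines $Sq^1\colon H^{2j}(W_i;\Z/2)\to H^{2j+1}(W_i;\Z/2)$ \emph{degree by degree}, by running the Mayer--Vietoris sequence with $\Z/4$-coefficients and comparing with the $\Z/2$ version to read off the Bockstein in each degree. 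You instead compute $Sq^1$ only in degree $2$ --- via the integral Mayer--Vietoris ($H^2(W_i;\Z)=0$, $H^3(W_i;\Z)\cong\Z/2$) and the long exact Bockstein sequence --- and then propagate to all even degrees through the derivation property $Sq^1(a^j)=j\,a^{j-1}Sq^1(a)$ and Poincar\'e duality to pin down $a^{j-1}Sq^1(a)=(a^{2i-j})^*$. This is cleaner and isolates where the twist by $\phi$ really matters (only integrally, in degree~$2$). The explicit identification $\xi=\underline\R\oplus L$ with $w(\xi)=1+h$ and the split Gysin sequence also streamline the even-degree Stiefel--Whitney computation relative to the paper's direct formula $w_{2j}(\overline E)=\binom{2i}{j}b^j+\binom{2i}{j-1}b^j$, though the content is the same. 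One small remark: the paper describes $\overline E$ as coming from the inclusion $SO(2)\hookrightarrow SO(4)$, which appears to be a typo for $SO(2)\hookrightarrow SO(3)$; your identification of $\xi$ as $\underline\R\oplus L$ (rank $3$) is the intended one.
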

	
	In particular it follows that each $W_i$ represents a non-trivial class in $\Omega_{4i+1}^{SO}$ (and, more generally, also in the unoriented bordism group $\Omega_{4i+1}^{O}$), since the Stiefel--Whitney number $w_3w_{2}^{2i-1}(W_i)$ is non-trivial.
	
	\begin{proof}
		Let $\overline{E}\xrightarrow{\pi}\C P^{2i-1}$ be the linear $D^3$-bundle over $\C P^{2i-1}$ obtained from the generalized Hopf fibration $S^{4i-1}\to\C P^{2i-1}$ via the inclusion $SO(2)\hookrightarrow SO(4)$. Then $E=\partial\overline{E}\xrightarrow{\pi|_E}\C P^{2i-1}$ is a linear $S^2$-bundle and we have an inclusion $S^{4i-1}\hookrightarrow E$ that divides $\overline{E}$ into two linear $D^2$-bundles over $\C P^{2i-1}$. These bundles have the same Euler class as the generalized Hopf fibration, i.e.\ it is given by a generator of $H^2(\C P^{2i-1})\cong\Z$, which equals the Euler class of the normal bundle of $\C P^{2i-1}\subseteq\C P^{2i}$. Hence, by identifying $\C P^{2i}\setminus{D^{4i}}^\circ$ with a tubular neighbourhood of $\C P^{2i-1}$ in $\C P^{2i}$, we obtain an identification
		\[ E\cong\C P^{2i}\#(-\C P^{2i}). \]
		We denote by $i_E\colon\C P^{2i}\#(-\C P^{2i})\cong E\hookrightarrow \overline{E}$ the inclusion. Hence, $W_i$ fits into the following pushout diagram:
		\[
		\begin{tikzcd}
			\C P^{2i}\#(-\C P^{2i})\arrow[hookrightarrow]{r}{i_E\circ\phi}\arrow[hookrightarrow]{d}{i_E} & \overline{E}\arrow[hookrightarrow]{d}\\
			\overline{E}\arrow[hookrightarrow]{r} & W_i
		\end{tikzcd}
		\]
		where $\phi\colon \C P^{2i}\#(-\C P^{2i})\to \C P^{2i}\#(-\C P^{2i})$ is a diffeomorphism inducing the map $(x,y)\mapsto (x,-y)$ on $H_2(\C P^{2i}\#(-\C P^{2i}))\cong H_2(\C P^{2i-1})\oplus H_2(\C P^{2i-1})\cong\Z^2$.
		
		Now let $R$ be an arbitrary commutative ring. We have $\overline{E}\simeq \C P^{2i-1}$, and, according to the identification described above, the inclusion $\C P^{2i}\setminus D^{4i}\hookrightarrow \C P^{2i}\#(-\C P^{2i})\hookrightarrow \overline{E}$ into the first resp.\ second summand maps $\C P^{2i-1}\subseteq\C P^{2i}\setminus D^{4i}$ into $\overline{E}$ as a section. In particular, it induces the identity map on $H_2(\C P^{2i-1};R)$. Hence, by the universal coefficient theorem, we can choose generators $\bar{b}_1,\bar{b}_2\in H^2(\overline{E};R)\cong H^2(\C P^{2i-1};R)\cong R$ for each copy of $\overline{E}$ and generators $b_1,b_2\in H^2(\C P^{2i}\#(-\C P^{2i});R)$ of the first and second summand of $H^2(\C P^{2i}\#(-\C P^{2i});R)$, respectively, so that the map $i_E^*$ is given by
		\[ i_E^*(\bar{b}_j)=b_1+b_2 \]
		for $j=1,2$.
		
		From the Mayer--Vietoris sequence we obtain the following exact sequence:
		\begin{equation}\label{EQ:MV}
			0\longrightarrow H^{2j}(W_i;R)\longrightarrow H^{2j}(\overline{E};R)\oplus H^{2j}(\overline{E};R)\longrightarrow H^{2j}(E;R)\longrightarrow H^{2j+1}(W_i;R)\longrightarrow 0,
		\end{equation}
		where the map $H^{2j}(\overline{E};R)\oplus H^{2j}(\overline{E};R)\to H^{2j}(E;R)$ is given by
		\[ \bar{b}_1^j\mapsto (b_1+b_2)^j=b_1^j+b_2^j,\quad \bar{b}_2^j\mapsto -(b_1-b_2)^j=-b_1^j-(-b_2)^j. \]
		In particular, for $R=\Z/2$ we have $\bar{b}_1^j,\bar{b}_2^j\mapsto b_1^j+b_2^j$, so that $H^{2j}(W_i;\Z/2)\cong H^{2j+1}(W_i;R)\cong\Z/2$ for $1\leq j\leq 2i-1$ and $\bar{b}_1+\bar{b}_2$ defines a generator $a$ of $H^2(W_i;\Z/2)$ such that $a^j$ is a generator of $H^{2j}(W;\Z/2)$ for all $j\leq 2i-1$.
		
		For the Stiefel--Whitney classes first note that from the decomposition $T\overline{E}\cong \pi^*\xi\oplus\pi^* T\C P^{2i-1}$, where $\xi$ is the vector bundle corresponding to $\pi$, and from the fact that for the generalized Hopf fibration only $w_0$ and $w_2$ are non-zero, we obtain
		\begin{align*}
			w_{2j}(\overline{E})&=\sum_{k=0}^{2j}w_k(\pi)\smile w_{2j-k}(\C P^{2i-1})\\
			&=w_{2j}(\C P^{2i-1})+w_{2j-2}(\C P^{2i-1})\smile b\\
			&={2i\choose j}b^j+{2i\choose j-1}b^j\\
			&={2i+1\choose j}b^j.
		\end{align*}
		Hence, since the map $H^{2j}(W_i;\Z/2)\to H^{2j}(\overline{E};\Z/2)\oplus H^{2j}(\overline{E};\Z/2)$ is injective, we obtain that
		\[w_{2j}(W_i)={2i+1\choose j}a^j.\]
		For the odd-degree Stiefel--Whitney classes we use the Wu formula (see e.g.\ \cite[Problem 8A]{MS74}) to obtain $w_{2j+1}(W_i)=Sq^1(w_{2j}(W_i))$. To determine the Steenrod square $Sq^1$ we use the naturality of the sequence \eqref{EQ:MV} to deduce that
		\[Sq^1\colon \Z/2\cong H^{2j}(W_i;\Z/2)\to H^{2j+1}(W_i;\Z/2)\cong \Z/2\]
		for $2\leq j\leq 2i-1$ is trivial if and only if $j$ is even. Indeed, the map $H^{2j}(\overline{E};\Z/4)\oplus H^{2j}(\overline{E};\Z/4)\to H^{2j}(E;\Z/4)$ has kernel generated by $\bar{b}_1^j-\bar{b}_2^j$ when $j$ is even and by $2\bar{b}_1^j-2\bar{b}_2^j$ when $j$ is odd. It follows that in the first case the coefficient homomorphism $H^{2j}(W_i;\Z/2)\to H^{2j}(W_i;\Z/4)$ is given by $\Z/2\to\Z/4,\,z\mapsto 2z$ in the first case and by $\Z/2\to\Z/2,\,z\mapsto z$ in the second. Hence, the Bockstein homomorphism $\beta\colon H^2(W_i;\Z/2)\to H^3(W_i;\Z/2)$, which equals the Steenrod square $Sq^1$, is trivial if and only if $j$ is even. Hence, $w_{2j+1}(W_i)$ is trivial if and only if ${2i+1\choose j}$ or $j$ are even.
	\end{proof}
	
	\begin{proof}[Proof of Proposition \ref{P:bordism_tors}]
		We first note that the only groups $\Omega_n^{SO}$ with $n\leq 12$ that contain torsion are in dimensions $n=5,9,10,11$, which can be seen as follows: By \cite[Theorem 4]{Wa60}, the ring $\Omega_*^{SO}$ is generated by a set of torsion-free classes in dimensions $4k$, $k\in\N$, and torsion classes in odd dimensions, where in dimensions $n\leq 12$ we only have a torsion generator in dimensions $5$, $9$ and $11$. Thus, by taking products we obtain the following:
		\begin{align*}
			\Omega_5^{SO}\cong\Z/2,\quad \Omega_9^{SO}\cong(\Z/2)^2,\quad \Omega_{10}^{SO}\cong\Z/2,\quad\Omega_{11}^{SO}\cong\Z/2
		\end{align*}
		and in all other dimensions $n\leq 12$ the group $\Omega_n^{SO}$ is torsion-free (cf.\ also \cite[p.\ 203]{MS74}). The group $\Omega_5^{SO}$ is generated by the Wu manifold $W_1$ and the group $\Omega_{10}^{SO}$ is generated by its square $W_1\times W_1$, which both admit a Riemannian metric of positive Ricci curvature. Further, the group $\Omega_9^{SO}$ is generated by $W_1\times \C P^2$ and $W_2$, which can be seen by considering the Stiefel--Whitney numbers $w_3 w_2^3$ and $w_7w_2$. Indeed, by Lemma \ref{L:Wi_cohom}, we have
		\[ w(W_1)=1+a+a^*,\quad w(W_2)=1+a+(a^3)^*, \]
		so that
		\[ w(W_1\times\C P^2)=(1+a+a^*)(1+b)^3=1+(a+b)+a^*+(ab+b^2)+a^*b+ab^2+a^*b^2. \]
		In particular, we have
		\[ w_3w_2^3(W_2)=1,\quad w_7w_2(W_2)=0,\quad w_3w_2^3(W_1\times\C P^2)=1,\quad w_7w_2(W_1\times\C P^2)=1. \]
		Hence, the manifolds $W_1\times\C P^2$ and $W_2$ generate $\Omega_9^{SO}$, so by Theorems \ref{T:core_bdl} and \ref{T:Wu} and Remark \ref{R:Wm} every class in $\Omega_9^{SO}$ is represented by a connected manifold admitting a Riemannian metric of positive Ricci curvature.
		
		Finally, the non-trivial class in $\Omega_{11}^{SO}$ is represented by the Dold manifold $P(3,4)$, see \cite[Section 3]{Wa60}, which is the quotient of $S^3\times \C P^4$ by the $\Z/2$-action $(x,z)\mapsto(-x,\bar{z})$. The product metric of the round metric and the Fubini--Study metric is invariant under this action, hence $P(3,4)$ admits a metric of positive Ricci curvature.
	\end{proof}
	
	
	Similarly, we can ask which classes in the spin bordism ring can be represented by Riemannian manifolds of positive Ricci curvature. An obstruction here is the $\hat{A}$-genus and we can ask the following question:
	\begin{question}
		Can every class in $\bigslant{\Omega_*^{Spin}}{Tors}$ with vanishing $\hat{A}$-genus (or, more generally, every class in $\Omega_*^{Spin}$ with vanishing $\alpha$-invariant) be represented by a connected manifold admitting a Riemannian metric of positive Ricci curvature?
	\end{question}
	The rational spin bordism ring $\Omega_*^{Spin}\otimes\Q$ has a basis given by
	\[\{[K3]\}\cup\{[\Quat P^i]\}_{i\geq 2}. \]
	Hence, by taking connected sums of products of quaternionic projective spaces, we obtain the following partial answer:
	\begin{proposition}\label{P:spin_bord}
		For every $k\in\N$ there exists a subspace of $\Omega_{4k}^{Spin}$ of rank $p(k)-p(k-1)$ in which each element is represented by a connected manifold admitting a Riemannian metric of positive Ricci curvature.
	\end{proposition}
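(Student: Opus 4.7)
The plan is to take the subgroup
\[
V_k \;=\; \Bigl\langle\, [\Quat P^{i_1}\times\dots\times \Quat P^{i_r}] \;\Bigm|\; r\geq 1,\ i_1,\dots,i_r\geq 2,\ i_1+\dots+i_r=k \,\Bigr\rangle \;\subseteq\; \Omega_{4k}^{Spin},
\]
exhibit a core metric on each generator, and realize an arbitrary element by a suitable connected sum. The rank count is immediate from the polynomial basis $\{[K3]\}\cup\{[\Quat P^i]\}_{i\geq 2}$ of $\Omega_*^{Spin}\otimes\Q$ recalled above: the monomials of degree $4k$ built only from generators $[\Quat P^i]$ with $i\geq 2$ are $\Q$-linearly independent in $\Omega_{4k}^{Spin}\otimes\Q$, and their number equals the number of partitions of $k$ having every part at least $2$; the bijection that removes a single $1$ from any partition containing one identifies this count with $p(k)-p(k-1)$.

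For each tuple $\vec{\imath}=(i_1,\dots,i_r)$ with $i_j\geq 2$, a core metric on $M_{\vec{\imath}}:=\Quat P^{i_1}\times\dots\times \Quat P^{i_r}$ is produced by induction on $r$. The base case $r=1$ is \ref{EQ:core1}. For the inductive step, view $M_{\vec{\imath}}$ as the trivial product $M_{i_1,\dots,i_{r-1}}\times\Quat P^{i_r}$; the former factor carries a core metric by hypothesis and the latter by \ref{EQ:core1}. Since both factors have dimension a positive multiple of $8$, hence at least $3$ and at least $2$ respectively, the trivial-bundle case of Theorem \ref{T:core_bdl} (with trivial structure group, so that hypotheses (2) and (3) are fulfilled by taking $\hat{g}=\hat{g}'$ equal to the core metric on $\Quat P^{i_r}$) applies and equips $M_{\vec{\imath}}$ with a core metric.

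Now for any $\alpha\in V_k$, write $\alpha=\sum_j a_j[M_j]$ with $a_j\in\Z$ and each $M_j$ a product of the above form, and set
\[
N_\alpha\;=\;\#_j\, \#^{|a_j|}\bigl(\epsilon_j M_j\bigr),\qquad \epsilon_j=\mathrm{sign}(a_j),
\]
with the convention $N_\alpha=S^{4k}$ when all $a_j=0$. Since $w_2(\Quat P^i)=0$, each $M_j$, and hence $N_\alpha$, is spin; the identity $[M\#M']=[M]+[M']$ in oriented spin bordism then gives $[N_\alpha]=\alpha$. Orientation reversal preserves Riemannian metrics and hence the core property, so each $\epsilon_j M_j$ inherits a core metric, and iterated application of \ref{EQ:core3} endows the connected manifold $N_\alpha$ with a core metric; in particular $N_\alpha$ admits a metric of positive Ricci curvature. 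The only mildly delicate point is identifying the subgroup $V_k$: once one observes that monomials with all exponents on generators of degree at least $8$ correspond to partitions of $k$ with parts $\geq 2$, both the rank count and the construction of representing manifolds are routine invocations of Theorem \ref{T:core_bdl} and \ref{EQ:core3}.
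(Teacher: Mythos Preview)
Your proof is correct and follows exactly the approach the paper sketches in the sentence preceding the proposition: products of $\Quat P^i$ with $i\geq 2$ receive core metrics via Theorem~\ref{T:core_bdl}, and arbitrary integer combinations are realized by connected sums via \ref{EQ:core3}. One harmless slip: the factors have dimension a multiple of $4$ (not $8$), but since each $i_j\geq 2$ the dimensions are still at least $8$, so the hypotheses of Theorem~\ref{T:core_bdl} are met.
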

	Here $p$ denotes the \emph{partition function}, i.e.\ $p(k)$ is the number of possible partitions of $k$ into non-negative integers.

	\bibliographystyle{plainurl}
	\bibliography{References}

\end{document}